\newtheorem{theorem}{Theorem}[section]
\newtheorem{lemma}[theorem]{Lemma}
\newtheorem{proposition}[theorem]{Proposition}
\newtheorem{corollary}[theorem]{Corollary}
\theoremstyle{definition}
\newtheorem{definition}[theorem]{Definition}
\theoremstyle{remark}
\newtheorem{remark}[theorem]{Remark}
\newtheorem{notation}[theorem]{Notation}
\numberwithin{equation}{section}
\newcommand{\As}{\mathcal A^\sharp_{(\omega, \beta \omega)}}
\newcommand{\C}{\mathbb C}
\newcommand{\ch}{\text{\rm ch}}
\newcommand{\D}{\mathcal D }
\newcommand{\Ext}{{\rm Ext}}
\newcommand{\Hom}{{\rm Hom}}
\newcommand{\Q}{\mathbb Q }
\newcommand{\R}{\mathbb R }
\newcommand{\rk}{\text{\rm rk}}
\newcommand{\shH}{\mathcal H}
\newcommand{\w}{\tilde}
\newcommand{\W}{\widetilde}
\newcommand{\Z}{\mathbb Z}
\begin{document}

\title[Mini-walls for Bridgeland stability conditions]
{Mini-walls for Bridgeland stability conditions on the derived category
of sheaves over surfaces}

\author[Jason Lo]{Jason Lo}
\address{Department of Mathematics, University of Missouri,
Columbia, MO 65211, USA} \email{locc@missouri.edu}

\author[Zhenbo Qin]{Zhenbo Qin$^\dagger$}
\address{Department of Mathematics, University of Missouri,
Columbia, MO 65211, USA} \email{qinz@missouri.edu}
\thanks{${}^\dagger$Partially supported by an NSF grant}

\keywords{Walls, Bridgeland stability, polynomial stability, derived category.}
\subjclass{Primary 14D20; Secondary: 14F05, 14J60}

\begin{abstract}
 For the derived category of bounded complexes of sheaves on a smooth projective
surface, Bridgeland \cite{Bri2} and Arcara-Bertram \cite{ABL} constructed Bridgeland stability
conditions $(Z_m, \mathcal P_m)$ parametrized by $m \in (0, +\infty)$.
In this paper, we show that the set of mini-walls in $(0, +\infty)$ of
a fixed numerical type is locally finite. In addition, we strengthen a result of
Bayer \cite{Bay} by proving that the moduli of polynomial Bridgeland semistable objects of
a fixed numerical type coincides with the moduli of $(Z_m, \mathcal P_m)$-semistable objects
whenever $m$ is larger than a universal constant depending only on the numerical type.
We further identify the moduli of polynomial Bridgeland semistable objects with the
Gieseker/Simpson moduli spaces and the Uhlenbeck compactification spaces.
\end{abstract}

\maketitle
\section{\bf Introduction}
\label{sect_intro}

Since the appearance of Bridgeland's seminal work \cite{Bri1}, there have been intensive
investigations of Bridgeland stability conditions on triangulated categories,
which can be viewed as a mathematical approach to understand Douglas' work \cite{Dou}
on $\Pi$-stability for D-branes in string theory. Bridgeland stability conditions for
smooth projective curves were classified by Macri \cite{Mac} and Okada \cite{Oka}.
Bridgeland stability conditions on smooth projective surfaces were constructed by
Bridgeland \cite{Bri2} and Arcara-Bertram \cite{ABL}, and the topology of the stability
manifolds for generic K3 categories was obtained by Huybrechts, Macri and Stellari \cite{HMS}.
Toda \cite{Tod2} studied Bridgeland stability conditions for Calabi-Yau fibrations.
A gluing procedure for Bridgeland stability conditions was found by Collins and
Polishchuk \cite{CP}. In another direction,
Bayer \cite{Bay} (see also Toda \cite{Tod3}) defined polynomial Bridgeland
stability for normal projective varieties of any dimension. The polynomial Bridgeland
stability may be viewed as the large volume limit of the Bridgeland stability.
The moduli stacks of Bridgeland semistable objects were investigated
in \cite{Ina, Lie, Tod1}, while the moduli stacks of {\it polynomial}
Bridgeland semistable objects were investigated in \cite{Lo1, Lo2, Lo3, LQ}.

The concepts of walls and chambers for Gieseker stability were introduced in \cite{Qin}
and played an important role in understanding Donaldson polynomial invariants of certain surfaces.
Walls and chambers in the space of Bridgeland stability conditions are closely related
to the wall-crossing phenomena discussed by Kontsevich and Soibelman \cite{KS}.
Let $X$ be a smooth projective surface, and let $\mathcal D^b(X)$ be the derived category of
bounded complexes of coherent sheaves on $X$. When $X$ is a $K3$ or abelian surface,
Bridgeland \cite{Bri2} proved that the set of walls in the space of Bridgeland stability conditions
on $\mathcal D^b(X)$ is locally finite. Whether the same conclusion holds for a general surface $X$
remains to be open.

In this paper, we analyze the set of mini-walls and mini-chambers in the space of
Bridgeland stability conditions. To state our results, we introduce some notations and definitions
(see Notation~\ref{ZmPm} and Definition~\ref{mini-walls} below for details).
The {\it numerical type} of an object $E \in \mathcal D^b(X)$ is defined to be
$\mathfrak t(E) = (\rk(E), c_1(E), c_2(E))$. Fix $\beta, \omega \in \text{\rm Num}(X)_\Q$
with $\omega$ being ample, and fix a numerical type $\mathfrak t = (r, c_1, c_2)$.
Bridgeland \cite{Bri2} and Arcara-Bertram \cite{ABL} constructed Bridgeland stability
conditions $(Z_m, \mathcal P_m)$ parametrized by $m \in (0, +\infty)$.
Regard $(0, +\infty)$ as a subset in the space of Bridgeland stability conditions.
Then walls and chambers in $(0, +\infty)$ are referred to as {\it mini-walls} and
{\it mini-chambers} of type $(\mathfrak t, \beta, \omega)$.

\begin{theorem}       \label{Intro-thm1}
Let $\beta, \omega \in \text{\rm Num}(X)_\Q$ with $\omega$ being ample,
and let $\mathfrak t = (r, c_1, c_2)$.
\begin{enumerate}
\item[(i)] The set of mini-walls of type $(\mathfrak t, \beta, \omega)$
in $(0, +\infty)$ is locally finite.

\item[(ii)] There exists a positive number $\W M$, depending only on
$\mathfrak t, \omega$ and $\beta$, such that there is no
mini-wall of type $(\mathfrak t, \beta, \omega)$ in $[\W M, +\infty)$.
\end{enumerate}
\end{theorem}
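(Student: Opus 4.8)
The plan is to reduce parts (i) and (ii) to one finiteness statement about the numerical types of destabilizing sub-objects, and to prove that statement by combining the Bogomolov inequality for Bridgeland-semistable objects, the Hodge index theorem on $\text{\rm Num}(X)_\R$, and the explicit form of $Z_m$.

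For a numerical type $\mathfrak u$, the construction of $(Z_m,\mathcal P_m)$ gives $\Im Z_m(\mathfrak u)=m\,\omega\cdot\bigl(\ch_1(\mathfrak u)-\rk(\mathfrak u)\beta\bigr)=:m\,I(\mathfrak u)$ and $\Re Z_m(\mathfrak u)=\rho(\mathfrak u)+\tfrac12\rk(\mathfrak u)\,m^2\omega^2$, where $\rho(\mathfrak u)\in\R$ does not depend on $m$. We may assume that a mini-wall at $m_0\in(0,+\infty)$ is produced by a $(Z_{m_0},\mathcal P_{m_0})$-semistable object $E$ of type $\mathfrak t$ with a proper sub-object $F\hookrightarrow E$ in the tilted heart satisfying $\phi_{m_0}(F)=\phi_{m_0}(E)$, so that $F$ and $E/F$ are $(Z_{m_0},\mathcal P_{m_0})$-semistable of the same phase. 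Writing $\mathfrak t'=\mathfrak t(F)$, the equality $\phi_{m_0}(F)=\phi_{m_0}(E)$ amounts to $\Re Z_{m_0}(F)\,I(\mathfrak t)=\Re Z_{m_0}(E)\,I(\mathfrak t')$; substituting the formulas above yields a polynomial identity in $m$ which, unless it holds for all $m$ (in which case $\phi_m(F)=\phi_m(E)$ identically and there is no wall), forces
\[
m_0^{\,2}=\frac{2\bigl(\rho(\mathfrak t)\,I(\mathfrak t')-\rho(\mathfrak t')\,I(\mathfrak t)\bigr)}{\omega^{2}\bigl(\rk(F)\,I(\mathfrak t)-r\,I(\mathfrak t')\bigr)} .
\]
Hence it suffices to show that for each $a>0$ only finitely many types $\mathfrak t'$ arise from mini-walls lying in $[a,+\infty)$: part (i) on $[a,b]$ is then immediate, and for part (ii) one takes $\W M$ larger than all the finitely many resulting walls, which depends only on $\mathfrak t,\omega,\beta$ since the set of such $\mathfrak t'$ does.

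To bound $\mathfrak t'=(\rk(F),c_1',c_2')$: since $F$ and $E/F$ lie in the tilted heart, $0\le I(\mathfrak t')\le I(\mathfrak t)$, so $I(\mathfrak t')$ takes only finitely many values (assume $I(\mathfrak t)>0$; the case $I(\mathfrak t)=0$ is degenerate and treated separately). The key estimate is a one-sided bound on $\rho(\mathfrak t')$ depending only on $\mathfrak t,\omega,\beta$. When $\rk(F)>0$, feed the Bogomolov inequality $\ch_1(F)^2-2\,\rk(F)\ch_2(F)\ge0$ (valid since $F$ is a Bridgeland-semistable object of positive rank, \cite{Bri2}) together with the Hodge index inequality $\ch_1(F)^2\le(\omega\cdot\ch_1(F))^2/\omega^{2}$ into $\rho(\mathfrak t')$ and complete the square; one gets $\rho(\mathfrak t')\ge -I(\mathfrak t')^{2}/(2\,\rk(F)\,\omega^{2})\ge -I(\mathfrak t)^{2}/(2\omega^{2})$. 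The case $\rk(F)<0$ is symmetric -- apply the same estimate to $E/F$, which has positive rank when $r\ge0$, or pass to a shift -- and yields an upper bound $\rho(\mathfrak t')\le I(\mathfrak t)^{2}/(2\omega^{2})$ instead. When $\rk(F)=0$, the Bogomolov inequality is unavailable, but a $(Z_{m_0},\mathcal P_{m_0})$-semistable rank-zero object of phase $<1$ has no zero-dimensional subsheaf (such a subsheaf would have phase $1$), so $F$ is a pure one-dimensional sheaf whose support class is effective of $\omega$-degree at most $I(\mathfrak t)$; thus $c_1'$ lies in a finite set, and $\rho(\mathfrak t')$ is again controlled using the Bogomolov inequality for $E/F$.

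Finally, with $I(\mathfrak t')$ equal to one of its finitely many values and $\rho(\mathfrak t')$ bounded on the relevant side, the wall equation displays $\rho(\mathfrak t')$ as an affine function of $m_0^{\,2}$; since $m_0^{\,2}\ge a^{2}>0$, this is incompatible with the bound on $\rho(\mathfrak t')$ once $|\rk(F)|$ is large (a destabilizer of very large rank can occur only at a wall $m_0$ very close to $0$), so $|\rk(F)|\le N(\mathfrak t,\omega,\beta,a)$. Once $\rk(F)$ and $\omega\cdot c_1'$ are bounded, the Hodge index theorem and the Bogomolov inequalities for $F$ and $E/F$ bound $\ch_1(F)^2$ from above and below, so $c_1'$ lies in a finite set, and then so does $c_2'$; this gives the desired finiteness, hence (i) and (ii). I expect the real obstacle to be exactly this rank estimate -- ruling out destabilizers of unboundedly large rank -- together with the bookkeeping needed to confirm that all the constants depend only on $\mathfrak t,\omega,\beta$ and not on the (a priori unbounded) family of objects $E$ being destabilized; the Bogomolov inequality and the negative-definiteness of the intersection form on $\omega^{\perp}$ (Hodge index) are what make the argument uniform.
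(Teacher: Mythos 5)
Your reduction of both parts to a finiteness statement for the numerical types of destabilizers is structurally reasonable, but two of its key steps are unjustified, and they are exactly where the paper's work lies. First, your bounds on $\rk(F)$ and on $\rho(\mathfrak t')$ invoke a Bogomolov inequality for Bridgeland-semistable objects of positive rank, cited to \cite{Bri2}; that reference proves such an inequality only for K3 surfaces, whereas the theorem here concerns an arbitrary smooth projective surface. A discriminant inequality for $(Z_m,\mathcal P_m)$-semistable complexes on a general surface is a substantial theorem that this paper neither proves nor uses, and assuming it replaces precisely the content of Lemma~\ref{bnd-rk}, which obtains the rank bound by a limiting argument using only the classical Bogomolov inequality for $\mu_\omega$-semistable sheaves applied to HN quotients of the cohomology sheaves of the destabilizer. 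Granting that input, your part (i) would go through much as in Proposition~\ref{prop-loc-fnt}: bounded rank together with the rationality of $\beta,\omega$ makes $m_0^2$ a rational number with bounded denominator lying in $[a^2,b^2]$, hence there are finitely many walls in a compact interval (for this you do not actually need $c_2'$ to range over a finite set).

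The more serious gap is in part (ii). Your scheme needs the full type $\mathfrak t'$ --- in particular $\ch_2(F)$ --- to range over a finite set for walls in $[a,+\infty)$, i.e.\ a two-sided bound on $\rho(\mathfrak t')$ uniform over an unbounded range of $m_0$. Your Bogomolov estimate gives only one side (an upper bound on $\ch_2(F)$ when $\rk(F)>0$); the other side amounts to bounding $\ch_2(E/F)$ from above, and here the argument breaks down: $E/F$ is not known to be semistable (under Definition~\ref{mini-walls} only the leading HN factor at $m_2$ is semistable, and $E$ itself is only semistable at some other $m_1$), and even after passing to semistable factors, rank-zero pieces (one-dimensional torsion sheaves) have discriminant $c_1^2 \ge 0$, which gives no control on their $\ch_2$. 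This quotient-side control is exactly what the paper labors over in Lemma~\ref{semistable-bnd} (Cases 1 and 3), where $c\big(\mathrm{Tor}(A_w)\big)$ and $c(B_w)/\rk(\mathcal B)$ are bounded by sheaf-theoretic arguments (Lemma~\ref{GCQ}, HN filtrations, the map from $\mathcal H^{-1}(E)$ to the last HN quotient of $\mathcal H^{-1}(B_w)$); the paper then deduces (ii) not from wall finiteness on $[a,+\infty)$ but by identifying $(Z_m,\mathcal P_m)$-semistability with polynomial stability for all $m\ge M$ (Theorem~\ref{mggM}). Indeed the paper remarks after Proposition~\ref{prop-loc-fnt} that it is unclear how to bound $d(\mathfrak t,\beta,\omega,A)$ --- essentially your $\rho(\mathfrak t')$ --- on $[a,+\infty)$; that is precisely the bound your proposal asserts without proof, so as written part (ii) is not established.
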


Theorem~\ref{Intro-thm1} has been observed in the special case considered in
Sect.~4 of \cite{ABL}. Moreover, Theorem~\ref{Intro-thm1}~(ii) strengths
the Proposition~4.1 in \cite{Bay} (see Lemma~\ref{4.1} below).
In fact, we prove in Theorem~\ref{mggM} that whenever $m \ge \W M$,
an object $E \in \mathcal D^b(X)$ with $\mathfrak t(E) = \mathfrak t$
is $(Z_m, \mathcal P_m)$-semistable if and only if $E$ is
$(Z_\Omega, \mathcal P_\Omega)$-semistable. Here $\Omega = (\omega, \rho, p, U)$ is
the stability data from Subsect.~\ref{subsect_Large}, and $(Z_\Omega, \mathcal P_\Omega)$
denotes the polynomial Bridgeland stability constructed in \cite{Bay}.

The main idea in proving Theorem~\ref{Intro-thm1}~(i) is to find an upper bound
for $\rk(A)$ if $A$ defines a mini-wall of type $(\mathfrak t, \beta, \omega)$
and if the mini-wall is contained in an interval $I = [a, +\infty)$.
This upper bound is universal in the sense that
it depends only on $I$ and $(\mathfrak t, \beta, \omega)$. Combining this idea
with an expanded version of the proof of the Proposition~4.1 in \cite{Bay} also
leads to the proof of Theorem~\ref{Intro-thm1}~(ii).

Next, we classify all the polynomial Bridgeland semistable objects in terms of
Gieseker/Simpson semistable sheaves. Let $\overline{\mathfrak M}_\Omega(\mathfrak t)$
be the set of all $(Z_\Omega, \mathcal P_\Omega)$-semistable objects
$E \in \mathcal P_\Omega((0, 1])$ with $\mathfrak t(E) = \mathfrak t$.
Let $\overline{\mathfrak M}_{\omega}(\mathfrak t)$ be
the moduli space of sheaves $E \in \text{\rm Coh}(X)$ which are
Simpson-semistable with respect to $\omega$ and satisfy $\mathfrak t(E) = \mathfrak t$.
 For $r > 0$, define ${\mathfrak M}_{\omega}(\mathfrak t)$ be
the moduli space of locally free sheaves $E$ which are
$\mu_\omega$-stable and satisfy $\mathfrak t(E) = \mathfrak t$,
and define $\overline{\mathfrak U}_{\omega}(\mathfrak t)$ to be the Uhlenbeck
compactification space associated to $\omega$ and $\mathfrak t$.
The case when $r = 0$ is covered by Lemma~\ref{00n} and Lemma~\ref{0c1c2}. For $r \ne 0$,
we have the following.

\begin{theorem}     \label{Intro-thm2}
Let $\Omega = (\omega, \rho, p, U)$ be from Subsect.~\ref{subsect_Large}.
Fix a numerical type $\mathfrak t = (r, c_1, c_2)$.
Let $\w {\mathfrak t} = (-r, c_1, c_1^2-c_2)$.
Assume that $\omega$ lies in a chamber of type $\mathfrak t$.
\begin{enumerate}
\item[(i)] If $r > 0$, then
$\overline{\mathfrak M}_\Omega(\mathfrak t) \cong
 \overline{\mathfrak M}_{\omega}(\mathfrak t)$.

\item[(ii)] If $r < 0$ and $c_1 \omega/r < \beta \omega$,
then $\overline{\mathfrak M}_\Omega(\mathfrak t) \cong
\overline{\mathfrak M}_{\omega}(\w {\mathfrak t})$.

\item[(iii)] If $r < 0$ and $c_1 \omega/r = \beta \omega$,
then $\overline{\mathfrak M}_\Omega(\mathfrak t) \cong
\overline{\mathfrak U}_{\omega}(\w {\mathfrak t})$.
\end{enumerate}
\end{theorem}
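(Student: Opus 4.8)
The plan is to reduce everything to the description of $(Z_\Omega,\mathcal P_\Omega)$-semistable objects of a fixed numerical type coming out of the proof of Theorem~\ref{Intro-thm1}, together with the large-volume comparison of Theorem~\ref{mggM}, and then to match the resulting classification with the classical moduli spaces directly. For (i), I would first show that any $(Z_\Omega,\mathcal P_\Omega)$-semistable $E\in\mathcal P_\Omega((0,1])$ with $\mathfrak t(E)=\mathfrak t$ and $r>0$ is a torsion-free sheaf: writing the canonical short exact sequence $0\to H^{-1}(E)[1]\to E\to H^0(E)\to 0$ in the tilted heart and comparing the polynomial phases of $H^{-1}(E)[1]$ and of the torsion part of $H^0(E)$ against that of $E$, the sign of the leading (rank) term — governed by $r>0$ — forces $H^{-1}(E)=0$ and $H^0(E)$ torsion-free. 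Once $E$ is a sheaf, Theorem~\ref{mggM} together with the large-volume-limit identification of $(Z_m,\mathcal P_m)$-semistability with Gieseker/Simpson $\omega$-semistability (as in \cite{Bay}) shows $E$ is $(Z_\Omega,\mathcal P_\Omega)$-semistable if and only if it is Gieseker/Simpson $\omega$-semistable, while conversely every such sheaf lies in $\mathcal P_\Omega((0,1])$ and is semistable. The same phase comparison shows the Jordan--H\"older factors of $E$ in $\mathcal P_\Omega((0,1])$ are again sheaves, hence coincide with its Gieseker/Simpson Jordan--H\"older factors, so the two S-equivalence relations agree. The hypothesis that $\omega$ lies in a chamber of type $\mathfrak t$ is what lets one upgrade this bijection to an isomorphism of moduli spaces, since it removes the stray strictly semistable behaviour that would otherwise obstruct matching the scheme structures. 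This gives $\overline{\mathfrak M}_\Omega(\mathfrak t)\cong\overline{\mathfrak M}_\omega(\mathfrak t)$.

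For (ii) and (iii), where $r<0$, I would pass to the derived dual $\mathbb D(-)=R\mathcal{H}om(-,\mathcal O_X)[1]$. A Chern-character computation gives $\mathfrak t(\mathbb D E)=\w{\mathfrak t}$ whenever $\mathfrak t(E)=\mathfrak t$, and $\mathbb D$ is an involutive anti-equivalence of $\mathcal D^b(X)$. Using that $\mathbb D$ carries $(Z_\Omega,\mathcal P_\Omega)$ to a polynomial Bridgeland stability condition of the same type (built from $\omega$ and $-\beta$; cf.\ \cite{Lo1,Lo2,LQ}) and takes semistable objects of type $\mathfrak t$ to semistable objects of the dual condition of type $\w{\mathfrak t}$ with Jordan--H\"older factors dualized, one obtains an isomorphism of $\overline{\mathfrak M}_\Omega(\mathfrak t)$ with the moduli of dual-semistable objects of type $\w{\mathfrak t}$. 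Since $\w{\mathfrak t}$ has positive rank $-r$, this last space is described by the positive-rank analysis above, and the two cases arise according to the position of $\w{\mathfrak t}$ relative to the tilting slope $-\beta\omega$ of the dual condition, which is exactly the dichotomy $c_1\omega/r<\beta\omega$ versus $c_1\omega/r=\beta\omega$. If $c_1\omega/r<\beta\omega$, then $\w{\mathfrak t}$ sits strictly on the torsion side of the tilt, the degree term of the polynomial phase survives, the dual-semistable objects are Gieseker/Simpson $\omega$-semistable sheaves, and one gets $\overline{\mathfrak M}_\omega(\w{\mathfrak t})$. If $c_1\omega/r=\beta\omega$, then $\w{\mathfrak t}$ lies exactly on the tilting wall, the rank and degree terms of the polynomial phase cancel against those of every relevant subobject, and the phase comparison is controlled purely by the $\mathrm{ch}_2$ term; this makes the relevant notion of semistability coincide with $\mu_\omega$-semistability, while the Jordan--H\"older filtration in the heart repackages the $\mu_\omega$-graded pieces, double-dualized, together with the length of the resulting $0$-dimensional torsion — precisely the data classified by the Uhlenbeck compactification — so $\overline{\mathfrak M}_\Omega(\mathfrak t)\cong\overline{\mathfrak U}_\omega(\w{\mathfrak t})$. (The $r=0$ analogues are Lemma~\ref{00n} and Lemma~\ref{0c1c2}.)

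The hard part will be (iii). One has to prove that on the tilting wall the polynomial phase inequality really does collapse to the $\mathrm{ch}_2$ comparison, so that every $\mu_\omega$-semistable sheaf of type $\w{\mathfrak t}$ is semistable in the relevant heart yet two such are S-equivalent exactly when their $\mu_\omega$-polystable reflexive hulls and their associated $0$-cycles coincide; this needs a precise analysis of the Harder--Narasimhan and Jordan--H\"older structure in the tilted heart and of how $0$-dimensional torsion enters through $\mathcal{E}xt$-sheaves under $\mathbb D$. Then one must match this classification with the actual construction of $\overline{\mathfrak U}_\omega(\w{\mathfrak t})$ at the level of moduli spaces, not just of closed points, and it is here that the hypothesis that $\omega$ lies in a chamber of type $\mathfrak t$ is indispensable, since it forces $\mu_\omega$-semistability, $\mu_\omega$-stability and the Uhlenbeck stratification to behave uniformly in families. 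A subsidiary but necessary point, used throughout (ii) and (iii), is to check that $\mathbb D$ is compatible with base change on the relevant locus, so that it induces an actual morphism — and hence an isomorphism — of moduli spaces rather than merely a bijection of points.
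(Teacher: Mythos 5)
Your outline contains two genuine gaps, one in each half. In part (i) the load-bearing step is the claimed ``large-volume-limit identification of $(Z_m,\mathcal P_m)$-semistability with Gieseker/Simpson $\omega$-semistability (as in [Bay])''. No such identification exists in the form you use it: for $m\gg 0$ (equivalently for $(Z_\Omega,\mathcal P_\Omega)$) the refinement of the slope comparison at equal $\mu_\omega$ is governed by $c(E)=-\ch_2(E)+c_1(E)\beta-\rk(E)\beta^2/2$, i.e.\ a $\beta$-twisted discriminant, whereas Gieseker/Simpson semistability orders by the Hilbert polynomial; these orderings differ in general, and Theorem~\ref{mggM} only compares $(Z_m,\mathcal P_m)$ with $(Z_\Omega,\mathcal P_\Omega)$, not with Gieseker stability. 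The paper never needs such an equivalence: it uses Lemma~\ref{4.2} to see that a semistable $E$ of positive rank is a $\mu_\omega$-semistable sheaf, then the chamber hypothesis to promote this to $\mu_\omega$-stability, and conversely verifies directly with the inequality (\ref{EB2}) that a $\mu_\omega$-stable sheaf admits no destabilizing subobject in $\mathcal A^\sharp_{(\omega,\beta\omega)}$ (Lemma~\ref{>}). This also shows you have misidentified the role of the hypothesis that $\omega$ lies in a chamber: it is not there to match scheme structures (the paper's $\overline{\mathfrak M}_\Omega(\mathfrak t)$ is a \emph{set} of $S$-equivalence classes, so no family or base-change argument is needed), but to make slope-, Gieseker- and polynomial stability coincide, to force $\mathcal H^{-1}(E)$ to be $\mu_\omega$-stable in cases (ii)--(iii), and to make the decomposition (\ref{uhlenbeck}) of the Uhlenbeck space valid; in particular every semistable object is stable in cases (i)--(ii), so your discussion of matching Jordan--H\"older factors is moot there.

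In parts (ii)--(iii) your strategy of transporting the problem through the derived dual is only asserted: the statements that $E\mapsto E^v[1]$ carries $(Z_\Omega,\mathcal P_\Omega)$ to a polynomial stability condition of the same shape built from $\omega$ and $-\beta$, preserves semistability, and dualizes Jordan--H\"older factors are exactly the kind of facts that require proof, and you yourself defer the crucial verifications for (iii) (``the hard part'') rather than carry them out; as written the proposal therefore does not establish (ii) or (iii). The paper argues differently and more directly, inside the single heart $\mathcal A^\sharp_{(\omega,\beta\omega)}$: for (ii), a semistable $E$ has $\mathcal H^0(E)$ zero-dimensional and $\mathcal H^{-1}(E)$ locally free and $\mu_\omega$-stable, so Lemma~3.4 of [ABL] writes $E=(\W E)^v[1]$ with $\W E\in\overline{\mathfrak M}_\omega(\w{\mathfrak t})$; conversely the stability of $(\W E)^v[1]$ is checked by a case analysis of subobjects, including ruling out skyscraper subobjects via $\Hom(\mathcal O_x,(\W E)^v[1])\cong\Ext^{-1}(\W E,\mathcal O_x)=0$ (Lemma~\ref{<}). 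For (iii) all phases equal $1$, and $S$-equivalence together with the sequence $0\to Q\to A[1]\to A^{**}[1]\to 0$ coming from $0\to A\to A^{**}\to Q\to 0$ reduces every semistable object to $A^{**}[1]\oplus Q$, which matches (\ref{uhlenbeck}) point by point (Lemma~\ref{=}); no general ``collapse to the $\ch_2$ comparison'' or Uhlenbeck stratification in families is invoked. To salvage your plan you would have to prove the duality-transport statements you rely on (compare [LQ]), or simply replace them by the direct subobject analysis above.
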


We therefore have a complete description of the moduli spaces of
$(Z_\Omega, \mathcal P_\Omega)$-semistable objects on every smooth projective surface.
In view of Theorem~\ref{mggM}, we obtain a complete description of the moduli spaces of
semistable objects with respect to certain Bridgeland stabilities on
a smooth projective surface. We remark that similar results in the context of
Bridgeland stability have been observed and studied by Kawatani \cite{Kaw},
Ohkawa \cite{Ohk} and Toda \cite{Tod1}. Similar results in the context of
{\it polynomial} Bridgeland stability have also appeared in Sect.~5 of \cite{LQ}
which only considered objects $E \in \mathcal A^p$ for those stability data
$\Omega = (\omega, \rho, p, U)$ such that $\rho = (\rho_0, \rho_1, \rho_2)$ satisfies
$\phi(\rho_0) \ne \phi(-\rho_2)$. However, in our present situation,
we have $\phi(\rho_0) = \phi(-\rho_2)$ since $\rho_0 = -1$ and $\rho_2 = 1/2$.

This paper is organized as follows. In Sect.~2, we recall the constructions of
Bridgeland, Arcara-Bertram and Bayer. Theorem~\ref{Intro-thm1}~(i) and (ii) are proved
in Sect.~3 and Sect.~4 respectively. In Sect.~5, we verify Theorem~\ref{Intro-thm2}.

\bigskip\noindent
{\bf Conventions}: The $i$-th cohomology of a sheaf $E$ on
a variety $X$ is denoted by $H^i(X, E)$, and its usual dual sheaf
$\mathcal Hom(E, \mathcal O_X)$ is denoted by $E^*$.
The derived category of bounded complexes of coherent sheaves
on $X$ is denoted by $\mathcal D^b(X)$. The $i$-th cohomology
sheaf of an object $E \in \mathcal D^b(X)$ is denoted by $\shH^i(E)$,
and the derived dual of $E$ is denoted by $E^v =
\R \mathcal Hom(E, \mathcal O_X) \in \mathcal D^b(X)$.

\bigskip\noindent
{\bf Acknowledgment}:
The authors thank Professors Jun Li and Wei-Ping Li for valuable helps and
stimulating discussions.

\section{\bf Preliminaries}
\label{sect_Preliminaries}

\subsection{\bf Constructions of Bridgeland and Arcara-Bertram}
\label{subsect_Constructions}
\par
$\,$

Let $X$ be a smooth complex projective surface.

\begin{definition}   \label{TFA}
Let $\omega \in {\rm Num}(X)_\R$ be ample, and let $v \in \R$.
\begin{enumerate}
\item[(i)] Define $\mathcal T_{(\omega, v)}$ to be the full
subcategory of ${\rm Coh}(X)$ generated by
torsion sheaves and torsion free $\mu_\omega$-stable sheaves $A$
with $\mu_\omega(A) > v$.

\item[(ii)] Define $\mathcal F_{(\omega, v)}$ to be the full
subcategory of ${\rm Coh}(X)$ generated by torsion free
$\mu_\omega$-stable sheaves $A$ with $\mu_\omega(A) \le v$.

\item[(iii)] Define $\mathcal A^\sharp_{(\omega, v)}$ to be the abelian category
obtained from ${\rm Coh}(X)$ by tilting at the torsion pair $\big (\mathcal T_{(\omega, v)},
\mathcal F_{(\omega, v)} \big )$, i.e., $\mathcal A^\sharp_{(\omega, v)}$ consists of
all the objects $E \in \mathcal D^b(X)$ satisfying the conditions:
\begin{eqnarray}  \label{des-tilt}
\mathcal H^{-1}(E) \in \mathcal F_{(\omega, v)}, \quad
\mathcal H^0(E) \in \mathcal T_{(\omega, v)}, \quad
\mathcal H^i(E) = 0 \,\, \text{\rm for } i \ne -1, 0.
\end{eqnarray}
\end{enumerate}
\end{definition}

The following lemma will be used in Case 3 in the proof of
Lemma~\ref{semistable-bnd} below.

\begin{lemma} \label{GCQ}
Let $Q$ be a $0$-dimensional torsion sheaf, and $\mathcal C \in
\mathcal T_{(\omega, v)}$. If $\mathcal G$ sits in an exact
sequence $0 \to \mathcal G \to \mathcal C \to Q \to 0$ of coherent sheaves,
then $\mathcal G \in \mathcal T_{(\omega, v)}$.
\end{lemma}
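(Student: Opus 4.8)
The plan is to prove that $\mathcal{G} \in \mathcal{T}_{(\omega, v)}$ by exploiting the defining property of the tilting: it suffices to show that $\mathcal{G}$ has no nonzero quotient sheaf in $\mathcal{F}_{(\omega, v)}$, equivalently that $\Hom(\mathcal{G}, A) = 0$ for every torsion free $\mu_\omega$-stable sheaf $A$ with $\mu_\omega(A) \le v$. Alternatively — and this is the route I would actually take — one observes that membership in $\mathcal{T}_{(\omega, v)}$ can be detected on the Harder–Narasimhan factors: a coherent sheaf lies in $\mathcal{T}_{(\omega, v)}$ if and only if its torsion-free part has all $\mu_\omega$-HN slopes strictly greater than $v$ (the torsion part is automatically allowed). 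So I would reduce to controlling the slopes of the HN factors of $\mathcal{G}$.

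\textbf{Key steps.} First I would pass to torsion-free quotients: let $\mathcal{G}'$ be the torsion-free part of $\mathcal{G}$, i.e.\ $\mathcal{G}' = \mathcal{G}/T(\mathcal{G})$ where $T(\mathcal{G})$ is the maximal torsion subsheaf. Since $\mathcal{C} \in \mathcal{T}_{(\omega,v)}$, every HN factor of the torsion-free part $\mathcal{C}'$ of $\mathcal{C}$ has slope $> v$. The inclusion $\mathcal{G} \hookrightarrow \mathcal{C}$ induces a map $\mathcal{G}' \to \mathcal{C}'$; because $Q$ is $0$-dimensional, the cokernel of $\mathcal{G} \to \mathcal{C}$ is $0$-dimensional, hence the induced map $\mathcal{G}' \to \mathcal{C}'$ is injective with $0$-dimensional cokernel. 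Now suppose for contradiction that $\mathcal{G}' \notin \mathcal{T}_{(\omega,v)}$, so the minimal HN slope $\mu_{\min}(\mathcal{G}')$ satisfies $\mu_{\min}(\mathcal{G}') \le v$. Let $\mathcal{G}'' \subseteq \mathcal{G}'$ be the (unique) maximal subsheaf with $\mu_{\min}(\mathcal{G}'') = \mu_{\min}(\mathcal{G}')$ so that $\mathcal{K} := \mathcal{G}'/\mathcal{G}''$ is a torsion-free sheaf all of whose HN slopes are $> v$, i.e.\ $\mathcal{K} \in \mathcal{T}_{(\omega,v)}$; the point is that $\mathcal{G}''$ is a nonzero torsion-free sheaf with $\mu_{\max}(\mathcal{G}'') \le v$, hence $\mathcal{G}'' \in \mathcal{F}_{(\omega,v)}$. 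I would then compare: the composition $\mathcal{G}'' \hookrightarrow \mathcal{G}' \hookrightarrow \mathcal{C}'$ exhibits $\mathcal{G}''$ as a subsheaf of $\mathcal{C}'$, so $\mu_{\max}(\mathcal{G}'') \le \mu_{\max}(\mathcal{C}')$ is automatic but not a contradiction; the real contradiction comes from the other direction. Because $\mathcal{C}' / \mathcal{G}'$ is $0$-dimensional, taking the image $\overline{\mathcal{G}''}$ of $\mathcal{G}''$ inside $\mathcal{C}'$ and the saturation argument: the saturation $\widehat{\mathcal{G}''}$ of $\mathcal{G}''$ in $\mathcal{C}'$ contains $\mathcal{G}''$ with $0$-dimensional quotient, so $\mu_{\max}(\widehat{\mathcal{G}''}) = \mu_{\max}(\mathcal{G}'') \le v$, contradicting the fact that every subsheaf of $\mathcal{C}'$ — and in particular every saturated subsheaf — has all HN slopes $> v$ because $\mathcal{C}' \in \mathcal{T}_{(\omega,v)}$ forces $\mu_{\min}(\widehat{\mathcal{G}''}) > v$. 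Hence $\mathcal{G}' \in \mathcal{T}_{(\omega,v)}$, and since $T(\mathcal{G})$ is torsion, $\mathcal{G} \in \mathcal{T}_{(\omega,v)}$ by the extension-closedness of $\mathcal{T}_{(\omega,v)}$.

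\textbf{Main obstacle.} The delicate point is keeping track of how the $0$-dimensional error term $Q$ interacts with torsion-free parts and saturations: one must verify that quotienting by a $0$-dimensional sheaf does not change $\mu_{\max}$ or $\mu_{\min}$ of the torsion-free part, and that saturating a subsheaf inside a sheaf whose HN factors all have slope $> v$ cannot decrease the minimal slope below $v$. Both facts are standard consequences of the behavior of slopes under modification by lower-dimensional sheaves, but they need to be invoked cleanly. I expect the bulk of the write-up to be the careful bookkeeping of these slope inequalities; the conceptual content is simply that $\mathcal{T}_{(\omega,v)}$ is closed under subsheaves modulo $0$-dimensional quotients, which is immediate from its description via HN slopes together with extension-closedness.
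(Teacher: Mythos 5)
Your opening reduction is fine: membership in $\mathcal T_{(\omega,v)}$ is detected by the $\mu_\omega$-HN slopes of the torsion-free part, and the induced map $\mathcal G' \to \mathcal C'$ on torsion-free parts is injective with $0$-dimensional cokernel. But the contradiction argument that follows is broken at its decisive step, in two ways. First, the object $\mathcal G''$ you posit does not exist in general: you ask for a subsheaf of $\mathcal G'$ lying in $\mathcal F_{(\omega,v)}$ (so $\mu_{\max}\le v$) whose quotient $\mathcal K$ lies in $\mathcal T_{(\omega,v)}$, but the torsion pair goes the other way around -- the $\mathcal T$-part of a sheaf is the \emph{sub}object and the $\mathcal F$-part is the \emph{quotient} -- and such a ``reversed'' decomposition fails for, say, a stable extension of a large-slope sheaf by a small-slope one; moreover ``the maximal subsheaf with $\mu_{\min}(\mathcal G'')=\mu_{\min}(\mathcal G')$'' is literally $\mathcal G'$ itself, so the construction is not well defined. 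Second, and fatally, your contradiction rests on the claim that every (saturated) subsheaf of $\mathcal C'\in\mathcal T_{(\omega,v)}$ again has all HN slopes $>v$. This is false: the bound $\mu_{\min}>v$ is inherited by torsion-free \emph{quotients}, not by subsheaves, whose $\mu_{\min}$ can drop arbitrarily (e.g.\ the saturated subsheaf $\mathcal O_X(-D)\subset\mathcal O_X^{\oplus 2}$ with $D\cdot\omega$ large). If that claim were true the lemma would be trivial with no hypothesis on $Q$ at all, whereas the lemma is actually false without the $0$-dimensionality of $Q$ (take $\mathcal C=\mathcal O_X$, $Q=\mathcal O_D$, $\mathcal G=\mathcal O_X(-D)$). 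So the step both assumes something false and begs the very question the lemma addresses.

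The argument can be repaired by dualizing it to quotients, which is where the $0$-dimensionality of $Q$ genuinely enters: if $\mathcal G\notin\mathcal T_{(\omega,v)}$, let $F$ be the last HN quotient of $\mathcal G'$, so $F$ is torsion-free, $\mu_\omega$-semistable, and $\mu_\omega(F)\le v$; writing $K=\ker(\mathcal G\to F)$, the sheaf $\mathcal C/K$ is an extension of $Q$ by $F$, its torsion is $0$-dimensional because $F$ is torsion-free, and hence $\mathcal C$ has a torsion-free quotient of the same rank and $c_1$ as $F$, i.e.\ of slope $\le v$, contradicting $\mu_{\min}(\mathcal C')>v$. This is essentially the mirror image of what the paper does: there one intersects $\mathcal G$ with the HN filtration $\mathcal C_0\subset\cdots\subset\mathcal C_n=\mathcal C$ and checks that each graded piece $\mathcal G_i/\mathcal G_{i-1}$ differs from $\mathcal C_i/\mathcal C_{i-1}$ only by a $0$-dimensional sheaf, hence is semistable of the same slope $\mu_i>v$, so the induced filtration of $\mathcal G$ is its HN filtration. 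Either route works; yours, as written, does not.
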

\begin{proof}
Let ${\rm Tor}(\mathcal C)$ denote the torsion subsheaf of
$\mathcal C$. Let
$$
{\rm Tor}(\mathcal C) = \mathcal C_0 \subset \mathcal C_1
\subset \ldots \subset \mathcal C_n = \mathcal C
$$
be the usual HN-filtration of $\mathcal C$ with respect to $\mu_\omega$. Let $\mu_i =
\mu_\omega(\mathcal C_i/\mathcal C_{i-1})$. Then for $i =
1, \ldots, n$, the sheaf $\mathcal C_i/\mathcal C_{i-1}$
is torsion free and $\mu_\omega$-semistable.
Moreover, $\mu_1 > \ldots > \mu_n$. By the definition of
$\mathcal T_{(\omega, v)}$, $\mu_1 > \ldots > \mu_n > v$.
For $i = 0, 1, \ldots, n$, let $\mathcal G_i$ and
$\mathcal Q_i$ be the kernel and image of the induced map
$\mathcal C_i \to Q$ respectively. Then $\mathcal G_{i-1} =
\mathcal G_i \cap \mathcal C_{i-1}$. The injection
$0 \to \mathcal G_i/\mathcal G_{i-1} \to
\mathcal C_{i}/\mathcal C_{i-1}$ implies that
$\mathcal G_i/\mathcal G_{i-1}$ is torsion free.
Also, we have commutative diagram of sheaves
\begin{eqnarray*}
\begin{array}{cccccccccccc}
&&0&&0&&\ldots&&0&&0 \\
&&\downarrow&&\downarrow&&&&\downarrow&&\downarrow \\
{\rm Tor}(\mathcal G)&=&\mathcal G_0&\subset&\mathcal G_1
  &\subset&\ldots&\subset&\mathcal G_n&=&\mathcal G \\
&&\downarrow&&\downarrow&&&&\downarrow&&\downarrow \\
{\rm Tor}(\mathcal C)&=&\mathcal C_0&\subset&\mathcal C_1
  &\subset&\ldots&\subset&\mathcal C_n&=&\mathcal C \\
&&\downarrow&&\downarrow&&&&\downarrow&&\downarrow \\
&&\mathcal Q_0&\subset&\mathcal Q_1&\subset&\ldots&\subset
  &\mathcal Q_n&=&\mathcal Q \\
&&\downarrow&&\downarrow&&&&\downarrow&&\downarrow \\
&&0&&0&&\ldots&&0&&0
\end{array}
\end{eqnarray*}
from which we obtain two exact sequences for each $i =
1, \ldots, n$:
\begin{eqnarray*}
\begin{array}{cccccccccccc}
0&\to&\mathcal G_i/\mathcal G_{i-1}&\to&
  \mathcal C_i/\mathcal G_{i-1}&\to&\mathcal Q_i&\to&0 \\
&&&&\|&&&& \\
0&\to&\mathcal Q_{i-1}&\to&\mathcal C_i/\mathcal G_{i-1}
  &\to&\mathcal C_{i}/\mathcal C_{i-1}&\to&0. \\
\end{array}
\end{eqnarray*}
Since $\mathcal G_i/\mathcal G_{i-1}$ is torsion free and
$\mathcal Q_{i-1}$ is torsion, we get an exact sequence
\begin{eqnarray*}
\begin{array}{cccccccccccc}
0 \to \mathcal G_i/\mathcal G_{i-1} \to
\mathcal C_{i}/\mathcal C_{i-1} \to \W{\mathcal Q}_i \to 0
\end{array}
\end{eqnarray*}
where $\W{\mathcal Q}_i$ is a $0$-dimensional torsion sheaf.
Thus, $\mu_\omega(\mathcal G_i/\mathcal G_{i-1})
= \mu_\omega(\mathcal C_i/\mathcal C_{i-1}) = \mu_i > v$,
and $\mathcal G_i/\mathcal G_{i-1}$ is $\mu_\omega$-semistable.
Hence
$$
{\rm Tor}(\mathcal G) = \mathcal G_0 \subset \mathcal G_1
\subset \ldots \subset \mathcal G_n = \mathcal G
$$
is the usual HN-filtration of $\mathcal G$ with respect to
$\mu_\omega$, and $\mathcal G \in \mathcal T_{(\omega, v)}$.
\end{proof}

Let ${\bf u} \in {\mathcal N}(X) \otimes_\Z \C$. Define the charge $Z_{\bf u}$ on $\D^b(X)$ by
\begin{eqnarray}  \label{bri-charge}
Z_{\bf u}(E) = - \int_X {\bf u} \cdot \ch(E).
\end{eqnarray}
The following lemma is due to Bridgeland \cite{Bri2} and Arcara-Bertram \cite{ABL}.

\begin{lemma} \label{stb-D}
Let ${\bf u} = e^{-(\beta + i \, \omega)}$ where $\beta, \omega
\in {\rm Num}(X)_\R$ and $\omega$ is ample. Then
$\big ( Z_{\bf u}, \mathcal A^\sharp_{(\omega, \beta\omega)} \big )$
induces a Bridgeland stability condition
$(Z_{\bf u}, \mathcal P_{\bf u})$ on $\D^b(X)$.
\end{lemma}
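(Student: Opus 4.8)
The plan is to verify the two axioms that characterize a Bridgeland stability condition on $\D^b(X)$ for the pair $\big(Z_{\bf u}, \mathcal A^\sharp_{(\omega, \beta\omega)}\big)$: first, that $Z_{\bf u}$ is a \emph{stability function} on the abelian category $\mathcal A^\sharp_{(\omega, \beta\omega)}$ — meaning every nonzero object $E$ has $Z_{\bf u}(E)$ lying in the strict upper half-plane $\{re^{i\pi\phi} : r > 0,\ 0 < \phi \le 1\}$ — and second, that this stability function has the Harder--Narasimhan property. Once both hold, the general machinery of Bridgeland (the equivalence between a bounded $t$-structure together with a stability function with HN property, and a stability condition $(Z, \mathcal P)$) produces the slicing $\mathcal P_{\bf u}$, with $\mathcal P_{\bf u}((0,1]) = \mathcal A^\sharp_{(\omega,\beta\omega)}$.

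For the positivity axiom I would expand ${\bf u} = e^{-(\beta + i\omega)}$ in the Chow/numerical ring, write $\ch(E) = (r, c_1, \ch_2)$, and compute
\begin{eqnarray*}
Z_{\bf u}(E) = -\int_X e^{-(\beta+i\omega)}\cdot\ch(E)
= \Big(\tfrac{\omega^2}{2}r - \tfrac{(\beta)^2}{2}r + \beta c_1 - \ch_2\Big) + i\,\omega\cdot(c_1 - r\beta).
\end{eqnarray*}
The imaginary part is $\omega\cdot c_1(E) - r\,\omega\cdot\beta = \rk(E)\big(\mu_\omega(E) - \beta\omega\big)$ when $\rk(E)>0$. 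The key point is to check the sign conditions separately on the two pieces $\mathcal F_{(\omega,\beta\omega)}[1]$ and $\mathcal T_{(\omega,\beta\omega)}$ of the tilt: for a torsion-free $\mu_\omega$-semistable sheaf $A$ with $\mu_\omega(A) \le \beta\omega$, the object $A[1]$ has $\Im Z_{\bf u}(A[1]) = -\rk(A)(\mu_\omega(A)-\beta\omega) \ge 0$, and when this imaginary part vanishes one must show the real part is strictly negative (so $\phi = 1$), which uses the Bogomolov inequality for semistable sheaves together with the ampleness of $\omega$; for objects of $\mathcal T_{(\omega,\beta\omega)}$ — torsion sheaves or $\mu_\omega$-semistable sheaves of slope $> \beta\omega$ — one gets $\Im Z_{\bf u} \ge 0$ with the boundary case (torsion sheaves supported in dimension $0$) again giving strictly negative real part. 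Since every object of $\mathcal A^\sharp_{(\omega,\beta\omega)}$ is built by extensions from these two types, additivity of $Z_{\bf u}$ on short exact sequences finishes the positivity check.

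For the Harder--Narasimhan property I would invoke the standard sufficient criterion: a stability function on a \emph{noetherian} abelian category whose image of $\Im Z$ is discrete (or more precisely, a category with no infinite sequences of strict sub/quotients destabilizing) automatically has the HN property. Concretely, $\mathcal A^\sharp_{(\omega,\beta\omega)}$ is noetherian — this is part of what makes the tilt well-behaved — and $\Im Z_{\bf u}$ takes values in a discrete subgroup of $\R$ (it is, up to scaling by the fixed rational classes $\omega,\beta$, an integer-valued intersection pairing), so there are no infinite chains of subobjects with strictly increasing phase; the usual argument then produces the HN filtration.

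**The main obstacle** I anticipate is the boundary analysis in the positivity axiom — ensuring $\phi(E) \le 1$ strictly in the sense that $\Im Z_{\bf u}(E) = 0 \Rightarrow \Re Z_{\bf u}(E) < 0$. This is where the Bogomolov inequality $c_1(A)^2 \le 2\rk(A)\,\ch_2(A)$ (equivalently the discriminant bound) for $\mu_\omega$-semistable sheaves enters, combined with the Hodge index theorem to control $\beta^2$ against $(\beta\omega)^2/\omega^2$; getting the inequality to come out strict requires care about the torsion-sheaf case and about semistable sheaves whose slope exactly equals $\beta\omega$. Since the statement attributes this lemma to Bridgeland \cite{Bri2} and Arcara--Bertram \cite{ABL}, I would present the above as the structure of the argument and refer to those sources for the detailed positivity estimates rather than reproducing them in full.
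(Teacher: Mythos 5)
The paper offers no proof of this lemma at all: it is stated with an attribution to Bridgeland \cite{Bri2} and Arcara--Bertram \cite{ABL}, so your proposal is being measured against the arguments in those sources rather than against anything in the text. Your positivity analysis is the standard one and is correct: your expansion of $Z_{\bf u}$ agrees with \eqref{ZEm}--\eqref{cE} at $m=1$, and the boundary case --- a torsion-free $\mu_\omega$-semistable sheaf $A$ with $\mu_\omega(A)=\beta\omega$, where one must show $\Re Z_{\bf u}(A)>0$ so that $A[1]$ has phase $1$ --- is exactly where Bogomolov and the Hodge index theorem enter: $(c_1(A)-\rk(A)\beta)\cdot\omega=0$ forces $(c_1(A)-\rk(A)\beta)^2\le 0$, hence $c(A)\ge 0$ and $\Re Z_{\bf u}(A)\ge \rk(A)\,\omega^2/2>0$; the $0$-dimensional torsion case gives $Z_{\bf u}=-\,\mathrm{length}<0$ as you say, and additivity handles general objects of the heart.

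The genuine gap is in your Harder--Narasimhan step. The lemma is stated for $\beta,\omega\in{\rm Num}(X)_\R$, but your argument quietly assumes rationality (``up to scaling by the fixed rational classes $\omega,\beta$''): only then is the image of $\Im Z_{\bf u}$, i.e.\ the set of values $c_1(E)\cdot\omega-\rk(E)\,\beta\omega$, a discrete subgroup of $\R$. For genuinely irrational classes this image is typically dense, so the criterion ``noetherian heart plus discrete image of $\Im Z$'' simply does not apply, and one needs either the finer chain-condition argument carried out in the cited sources (which controls both $\Im Z_{\bf u}$ and $\Re Z_{\bf u}$ along chains of subobjects in the tilted heart) or a deformation from the rational case, with the support property supplied by the Bogomolov inequality. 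Relatedly, noetherianity of $\mathcal A^\sharp_{(\omega,\beta\omega)}$ is not formal --- tilts of noetherian hearts need not be noetherian --- and is itself one of the nontrivial points established in \cite{Bri2} and \cite{ABL}. Deferring these details to the cited papers is consistent with what the present paper does, but as written your HN argument would fail outside the case of $\Q$-proportional $\beta,\omega$, which is precisely the generality the lemma claims.
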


\subsection{\bf Polynomial stability and large volume limits}
\label{subsect_Large}
\par
$\,$

Let $\Omega = (\omega, \rho, p, U)$ be the stability data defined
by the following:
\begin{enumerate}
\item[$\bullet$] $\omega \in \text{\rm Num}(X)_\R$ is ample,
\item[$\bullet$] $\rho = (\rho_0, \rho_1, \rho_2)$ with
                 $\rho_i = -(-i)^d/d!$,
\item[$\bullet$] $p: \{0, 1, 2\} \to \Z$ is the perversity function
                 $p(d) = -\lfloor d/2 \rfloor$,
\item[$\bullet$] $U = e^{-\beta}$ for some $\beta \in
                 \text{\rm Num}(X)_\R$.
\end{enumerate}
Let $Z_\Omega: K(\mathcal D^b(X))
= K(X) \to \C[m]$ be the central charge defined by
\begin{eqnarray}      \label{charge}
Z_\Omega(E)(m)
= \int_X \sum_{d=0}^2 \rho_d \omega^dm^d \cdot \ch(E) \cdot U
= - \int_X e^{-(\beta + i \, m \omega)} \cdot \ch(E).
\end{eqnarray}
By \cite{Bay}, $Z_\Omega(E)(m)$ induces
a polynomial stability condition $(Z_\Omega, \mathcal P_\Omega)$
on $\mathcal D^b(X)$.

\begin{lemma} \label{4.2}
(Lemma~4.2 in \cite{Bay}) We have $\mathcal P_\Omega((0, 1]) =
\mathcal A^\sharp_{(\omega, \beta\omega)}$. In fact,
if $E \in \mathcal P_\Omega((0, 1]) \subset \mathcal D^b(X)$ is
$(Z_\Omega, \mathcal P_\Omega)$-semistable, then $E$ is one of
the following:
\begin{enumerate}
\item[(i)] $E$ is a torsion sheaf;

\item[(ii)] $E$ is a torsion free $\mu_\omega$-semistable sheaf
with $\mu_\omega(E) > \beta \omega$.

\item[(iii)] $\mathcal H^{-1}(E)$ is a torsion free
$\mu_\omega$-semistable sheaf with $\mu_\omega(\mathcal H^{-1}(E))
\le \beta \omega$, $\mathcal H^0(E)$ is a $0$-dimensional torsion,
and all other cohomology sheaves of $E$ vanish.
\end{enumerate}
\end{lemma}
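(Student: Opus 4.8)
I would prove this in two independent parts: the equality of hearts $\mathcal P_\Omega((0,1])=\As$, and the classification of the $(Z_\Omega,\mathcal P_\Omega)$-semistable objects lying in that heart. For the first part, the plan is to unwind Bayer's construction of the perverse heart attached to the stability data $\Omega$. By \cite{Bay}, $\mathcal P_\Omega((0,1])$ is built from ${\rm Coh}(X)$ by the tilting prescribed by the perversity function $p$; since $p(0)=p(1)=0$ and $p(2)=-1$, on the surface $X$ this is a single tilt, in which the ``subobject part'' of a positive-rank sheaf of sufficiently small $\mu_\omega$-slope is moved into cohomological degree $-1$, and the twist $U=e^{-\beta}$ fixes the slope threshold at $\beta\omega$; the resulting category is precisely the tilt of ${\rm Coh}(X)$ at the torsion pair $\big(\mathcal T_{(\omega,\beta\omega)},\mathcal F_{(\omega,\beta\omega)}\big)$ of Definition~\ref{TFA}, namely $\As$. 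An alternative route that avoids re-deriving Bayer's tilt: by Lemma~\ref{stb-D} both $\As$ and $\mathcal P_\Omega((0,1])$ are hearts of bounded $t$-structures on $\mathcal D^b(X)$, so it is enough to check the single inclusion $\As\subseteq\mathcal P_\Omega((0,1])$ (a heart of a bounded $t$-structure contained in another such heart must coincide with it), and this one verifies on the objects that generate $\As$ under extensions---torsion sheaves, $\mu_\omega$-stable torsion-free sheaves of slope $>\beta\omega$, and the shifts $A'[1]$ of $\mu_\omega$-stable torsion-free sheaves $A'$ of slope $\le\beta\omega$---using the phase estimates recalled next.

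For the second part, the key input is the expansion, immediate from \eqref{charge} with $\ch(E)=(r,c_1,\ch_2(E))$,
\[
Z_\Omega(E)(m)=\Big(\tfrac{r\,\omega^2}{2}\,m^2+c_1\beta-\ch_2(E)-\tfrac{r\,\beta^2}{2}\Big)+i\,m\,\big(c_1\omega-r\,\beta\omega\big),
\]
from which I would read off the behaviour of the phase $\phi_\Omega$ as $m\to+\infty$ of the relevant ``building blocks'': for a torsion-free sheaf $G$ of positive rank and slope $\mu=\mu_\omega(G)$, one has $\phi_\Omega(G)(m)\to0^+$ when $\mu>\beta\omega$ and $\phi_\Omega(G[1])(m)\to1^-$ when $\mu<\beta\omega$ (the value being $1$ for $m\gg0$ when $\mu=\beta\omega$), and in each regime the phase is, for $m\gg0$, strictly increasing in $\mu$; for a pure $1$-dimensional torsion sheaf the phase tends to $\tfrac12$; and for a $0$-dimensional torsion sheaf $Z_\Omega$ is a negative real constant, so the phase equals $1$. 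This monotonicity in the slope is the engine of the argument.

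With these estimates in hand, I would run the case analysis on a $(Z_\Omega,\mathcal P_\Omega)$-semistable $E\in\As=\mathcal P_\Omega((0,1])$, using \eqref{des-tilt} to write $\mathcal H^{-1}(E)\in\mathcal F_{(\omega,\beta\omega)}$, $\mathcal H^0(E)\in\mathcal T_{(\omega,\beta\omega)}$, and all other cohomology sheaves zero. If $\mathcal H^{-1}(E)=0$, then $E$ is a sheaf in $\mathcal T_{(\omega,\beta\omega)}$: if it is torsion we are in case (i); otherwise the torsion subsheaf ${\rm Tor}(E)$ is a subobject of $E$ in $\As$, so if it were nonzero its phase would exceed $\phi_\Omega(E)(m)\to0$ for $m\gg0$, forcing ${\rm Tor}(E)=0$, and then if $E$ were $\mu_\omega$-unstable its maximal destabilizing subsheaf would be a subobject in $\As$ of strictly larger phase for $m\gg0$ by the slope-monotonicity, again contradicting semistability---so $E$ is $\mu_\omega$-semistable with $\mu_\omega(E)>\beta\omega$, which is case (ii). If $\mathcal H^{-1}(E)\ne0$, then $\mathcal H^{-1}(E)[1]$ is a subobject of $E$ in $\As$ of asymptotic phase $1$, whence $\phi_\Omega(E)(m)\to1$ and so does $\phi_\Omega(\mathcal H^0(E))(m)$ (as a quotient of $E$); but by the building-block phases a nonzero sheaf in $\mathcal T_{(\omega,\beta\omega)}$ of asymptotic phase $1$ must be $0$-dimensional torsion. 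Finally, were $\mathcal H^{-1}(E)$ $\mu_\omega$-unstable, with maximal destabilizing subsheaf $F_1$ and quotient $F_2$, then $F_1[1]\subset E$ in $\As$ while $E$ surjects in $\As$ onto a $Q$ fitting in $0\to F_2[1]\to Q\to\mathcal H^0(E)\to0$, and $\mu_\omega(F_1)>\mu_\omega(F_2)$ yields $\phi_\Omega(F_1[1])(m)>\phi_\Omega(Q)(m)$ for $m\gg0$, contradicting $\phi_\Omega(F_1[1])\le\phi_\Omega(E)\le\phi_\Omega(Q)$; hence $\mathcal H^{-1}(E)$ is $\mu_\omega$-semistable, so we are in case (iii).

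The step I expect to cost the most care is the first part: faithfully matching Bayer's perverse heart with the explicit tilt $\As$, and in particular checking that the slope threshold is exactly $\beta\omega$ rather than some other constant coming from $U$. In the second part the remaining delicate points are routine bookkeeping---the boundary locus $\mu_\omega=\beta\omega$, where $Z_\Omega$ degenerates to a real polynomial and several of the strict inequalities above must be replaced by the (still sufficient) non-strict ones, and the check that each sheaf-level sub- or quotient sequence invoked is honestly short exact in $\As$ (automatic for sequences of sheaves all lying in $\mathcal T_{(\omega,\beta\omega)}$, and for those obtained by shifting sequences of sheaves all lying in $\mathcal F_{(\omega,\beta\omega)}$).
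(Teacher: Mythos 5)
First, a remark on context: the paper gives no proof of this lemma at all --- it is quoted verbatim from Bayer's paper --- so your proposal has to be measured against Bayer's argument rather than anything in this text. Your Part 2 (the classification of the semistable objects inside $\As$, granting the equality of categories) is essentially correct and close to the standard argument: the phase asymptotics you list, the use of ${\rm Tor}(E)$ and of the maximal destabilizing subsheaf as subobjects in the tilt, and the pair $F_1[1]\subset E$, $E\twoheadrightarrow Q$ in the case $\mathcal H^{-1}(E)\neq 0$ are all sound, and the exactness checks you defer are exactly the ones handled by Lemma~\ref{quotient}.

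The genuine gap is in Part 1. Your primary route identifies Bayer's perverse heart $\mathcal A^p$ (the tilt prescribed by the perversity $p$) with $\As$, with $U=e^{-\beta}$ ``fixing the slope threshold at $\beta\omega$''. This is false: $\mathcal A^p$ depends only on $p$ and on dimensions of support --- on a surface it is the tilt of ${\rm Coh}(X)$ at the torsion pair $\big({\rm Coh}_{\le 1}(X),\{\text{torsion-free sheaves}\}\big)$, with no slope condition and no dependence on $U$ --- and the paper itself stresses in Sect.~\ref{sect_Gieseker} that $\mathcal A^p$ and $\As$ are different categories. The equality $\mathcal P_\Omega((0,1])=\As$ is precisely the nontrivial content of the lemma, so this route begs the question. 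Your fallback route (both are hearts of bounded t-structures, so one inclusion suffices) has the right skeleton, but the step carrying all the weight --- that torsion sheaves, $\mu_\omega$-stable torsion-free $F$ with $\mu_\omega(F)>\beta\omega$, and $F[1]$ with $\mu_\omega(F)\le\beta\omega$ lie in $\mathcal P_\Omega((0,1])$ --- cannot be ``verified using the phase estimates'': membership in $\mathcal P_\Omega((0,1])$ is a condition on all HN factors of the object with respect to the polynomial stability condition, and those HN filtrations are taken in $\mathcal A^p$, not in $\As$; the asymptotic phase of the object itself proves nothing unless the object is already known to be $(Z_\Omega,\mathcal P_\Omega)$-semistable. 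What is actually needed (and what Bayer's proof does) is an analysis of subobjects and quotients inside $\mathcal A^p$: for instance, for $F$ $\mu_\omega$-stable of slope $>\beta\omega$ one shows that every nonzero quotient $B$ of $F[1]$ in $\mathcal A^p$ has $\mathcal H^0(B)=0$ and $\mu_\omega(\mathcal H^{-1}(B))>\beta\omega$, hence phase germ strictly greater than $1$, which is what places $F[1]$ in $\mathcal P_\Omega((1,2])$ and hence $F$ in $\mathcal P_\Omega((0,1])$. Nothing of this kind appears in your outline, and your closing worry about ``the slope threshold coming from $U$'' confirms the misreading: $U$ and $\omega$ enter only the central charge, not the heart, and the threshold $\beta\omega$ emerges from the asymptotic phase comparisons, not from the construction of $\mathcal A^p$.
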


\begin{notation} \label{ZmPm}
Fix ${\bf u} = e^{-(\beta + i \, \omega)}$. Put ${\bf u}_m =
e^{-(\beta + i \, m \omega)}$ and
$$
Z_m(E) = Z_{{\bf u}_m}(E)
= - \int_X e^{-(\beta + i \, m \omega)} \cdot \ch(E)
= Z_\Omega(E)(m).
$$
Let $(Z_m, \mathcal P_m)$ denote the Bridgeland stability condition
on $\mathcal D^b(X)$ induced by $Z_m$.
\end{notation}

\begin{lemma} \label{4.1}
(Proposition~4.1 in \cite{Bay})
Let notations be as above. Assume further that $\omega \in
\text{\rm Num}(X)_\Q$. Then, an object $E \in \mathcal D^b(X)$ is
$(Z_\Omega, \mathcal P_\Omega)$-semistable
if and only if $E$ is $(Z_m, \mathcal P_m)$-semistable for $m \gg 0$.
Moreover, for an arbitrary object $E \in \mathcal D^b(X)$, the
HN-filtration of $E$ with respect to $(Z_\Omega, \mathcal P_\Omega)$
is identical to the HN-filtration of $E$ with respect to
$(Z_m, \mathcal P_m)$ for $m \gg 0$.
\end{lemma}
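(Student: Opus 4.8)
The plan rests on the fact that all the stability conditions in play share one heart. By Lemma~\ref{stb-D}, applied with $m\omega$ in place of $\omega$, the heart of $(Z_m,\mathcal P_m)$ is $\mathcal A^\sharp_{(m\omega,\,m\beta\omega)}$, and this equals $\As$ for every $m>0$ because $\mu_{m\omega}$-stability coincides with $\mu_\omega$-stability and the threshold $\mu_{m\omega}>m\beta\omega$ is equivalent to $\mu_\omega>\beta\omega$; by Lemma~\ref{4.2}, the heart $\mathcal P_\Omega((0,1])$ of the polynomial stability is also $\As$. Moreover $Z_m(F)=Z_\Omega(F)(m)$ by construction. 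After shifting we may assume $E\in\As$, so that both notions of (semi)stability are tested only against short exact sequences $0\to F\to E\to G\to0$ in $\As$. For such a \emph{fixed} $F$, the comparison of $\phi_m(F)$ and $\phi_m(E)$ is governed by the sign of $\Re Z_m(F)\cdot\Im Z_m(E)-\Im Z_m(F)\cdot\Re Z_m(E)$, a real polynomial in $m$; hence this comparison is eventually constant in $m$ and coincides there with the comparison of the polynomial phases. Thus, for a fixed $F$, ``$F$ destabilizes $E$ for $(Z_\Omega,\mathcal P_\Omega)$'' is equivalent to ``$\phi_m(F)>\phi_m(E)$ for all $m\ge m_0(F)$'', and the whole content of the lemma is to make this threshold $m_0$ uniform in $F$.

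For uniformity I would argue by contradiction. Suppose $E$ is $(Z_\Omega,\mathcal P_\Omega)$-semistable but not $(Z_{m_k},\mathcal P_{m_k})$-semistable along some sequence $m_k\to+\infty$, and let $F_k\subsetneq E$ be the maximal destabilizing subobject for $(Z_{m_k},\mathcal P_{m_k})$. The claim is that the $F_k$ realize only finitely many numerical types. First, from $0\le\Im Z_{m_k}(F_k)\le\Im Z_{m_k}(E)$ and $\Im Z_m(F)=m\,\omega\cdot(c_1(F)-\rk(F)\beta)$ one gets $\omega\cdot(c_1(F_k)-\rk(F_k)\beta)\in[0,\,\omega\cdot(c_1(E)-\rk(E)\beta)]$, a finite subset of $\Q$ since $\beta,\omega\in\text{\rm Num}(X)_\Q$. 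Next, inspecting the leading and subleading terms of $Z_{m_k}$ shows that whenever the sign of $\rk(F_k)$, or (when $\rk(F_k),\rk(E)>0$) the inequality $\mu_\omega(F_k)>\mu_\omega(E)$, would give $F_k$ strictly larger limiting phase than $E$, the object $F_k$ is already a \emph{polynomial} destabilizer of $E$, contradicting the hypothesis; so we may assume $\rk(F_k)$ is bounded --- in absolute value by $\rk\,\shH^{-1}(E)$ when $\rk(F_k)\le0$, via $\shH^{-1}(F_k)\hookrightarrow\shH^{-1}(E)$, and by $\rk(E)$ when $\rk(F_k)>0$ and $\mu_\omega(F_k)=\mu_\omega(E)$. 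With $\rk(F_k)$ and $\omega\cdot c_1(F_k)$ bounded, a Bogomolov-type inequality for $(Z_{m_k},\mathcal P_{m_k})$-semistable objects together with the inclusion $F_k\subset E$ then bounds $c_1(F_k)$ and $\ch_2(F_k)$, leaving finitely many numerical types $\mathfrak t_1,\dots,\mathfrak t_N$. This boundedness step is the main obstacle: it is precisely where one must go beyond the polynomial-phase comparison and use the structure of $\As$ together with a Bogomolov-type inequality, and an effective form of it underlies Theorem~\ref{Intro-thm1}.

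Given the finitely many types $\mathfrak t_1,\dots,\mathfrak t_N$, the comparison of $\phi_m(F)$ and $\phi_m(E)$ for $m$ near $+\infty$ depends only on $\mathfrak t_j$ and $\mathfrak t(E)$, so each $\mathfrak t_j$ carries a single threshold $m_0(\mathfrak t_j)$; taking $m_0=\max_j m_0(\mathfrak t_j)$ contradicts the existence of the $F_k$ with $m_k\to+\infty$, and proves that $E$ is $(Z_m,\mathcal P_m)$-semistable for $m\gg0$. For the statement on HN-filtrations, let $0=E_0\subset E_1\subset\cdots\subset E_n=E$ be the $(Z_\Omega,\mathcal P_\Omega)$-HN filtration; each factor $A_i=E_i/E_{i-1}$ is $(Z_\Omega,\mathcal P_\Omega)$-semistable, hence $(Z_m,\mathcal P_m)$-semistable for $m\gg0$ by the previous paragraph, and the finitely many strict inequalities $\phi_m(A_i)>\phi_m(A_{i+1})$ hold for $m\gg0$, so by uniqueness of HN-filtrations this filtration is the $(Z_m,\mathcal P_m)$-HN filtration of $E$ for $m\gg0$. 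Finally the converse implication follows: if $E$ is not $(Z_\Omega,\mathcal P_\Omega)$-semistable its polynomial HN-filtration has $n\ge2$ factors, so the same filtration shows $E$ is not $(Z_m,\mathcal P_m)$-semistable for $m\gg0$.
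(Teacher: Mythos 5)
You should first note that the paper contains no proof of this statement: Lemma~\ref{4.1} is quoted verbatim as Proposition~4.1 of \cite{Bay}, and what the paper itself proves (Lemmas~\ref{unstable-bnd} and \ref{semistable-bnd}, Theorem~\ref{mggM}) is a strengthening in which the threshold in $m$ depends only on $\mathfrak t(E),\omega,\beta$; in fact Lemma~\ref{semistable-bnd} uses Lemma~\ref{4.1} as an input. So your attempt has to stand on its own, and its skeleton is reasonable: the identification of all the hearts with $\As$, the fact that for a \emph{fixed} subobject $F\subset E$ the comparison of $\phi_m(F)$ with $\phi_m(E)$ is eventually constant in $m$ and then agrees with the polynomial-phase comparison, and the final paragraph deducing the HN statement and the converse from uniform semistability plus finitely many strict phase inequalities are all correct. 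The entire content, as you say yourself, is the uniformity in $F$.

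The genuine gap is exactly at the step ``so we may assume $\rk(F_k)$ is bounded.'' Your dichotomy covers only the easy cases: $\rk(F_k)\le 0$ (via $\shH^{-1}(F_k)\hookrightarrow\shH^{-1}(E)$) and $\rk(F_k)>0$ with $\mu_\omega(F_k)\ge\mu_\omega(E)$, $\rk(E)>0$ (where either the imaginary parts bound the rank or $F_k$ already destabilizes in the limit, contradicting polynomial semistability). It says nothing about the dangerous case $\rk(F_k)>0$ with $\beta\omega<\mu_\omega(F_k)<\mu_\omega(E)$, nor about $\rk(F_k)>0$ when $\rk(E)\le 0$: such an $F_k$ is \emph{not} a polynomial destabilizer (its limiting phase is strictly smaller than that of $E$), yet it can destabilize at the finite value $m_k$, and the constraint $0\le c_1(F_k)\omega-\rk(F_k)\beta\omega\le c_1(E)\omega-\rk(E)\beta\omega$ only forces $\mu_\omega(F_k)\to\beta\omega$ if $\rk(F_k)\to\infty$; nothing elementary excludes $\rk(F_k)\to\infty$ along $m_k\to\infty$. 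Ruling this out is precisely the hard analytic point, and in this paper it is done only by the limiting arguments of Proposition~\ref{bnd-H01} and Lemma~\ref{bnd-rk} (pass to the smallest-slope $\mu_\omega$-HN quotient of $\shH^0$ modulo torsion, apply the classical Bogomolov inequality to that quotient, feed the resulting bound back into the phase inequality at $m_{0,n}$ and $m_{2,n}$, and derive the contradiction $0\le-\omega^2m_0^2/2$). Relatedly, your invocation of ``a Bogomolov-type inequality for $(Z_{m_k},\mathcal P_{m_k})$-semistable objects'' is an ingredient you have not established and which is available neither in this paper nor in the cited machinery; the arguments here use Bogomolov only for $\mu_\omega$-semistable \emph{sheaves}, applied to HN factors of the cohomology sheaves of the destabilizer. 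Until the boundedness of $\rk(F_k)$ (and then of $c(F_k)/\rk(F_k)$ from below) is proved in all cases, the uniform threshold $m_0$ does not exist and the proof is incomplete.
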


\begin{definition}   \label{num-type}
For $E \in \mathcal D^b(X)$, define its {\it numerical type}
$\mathfrak t(E)$ by
\begin{eqnarray}   \label{num-type.1}
\mathfrak t(E) = (\rk(E), c_1(E), c_2(E)).
\end{eqnarray}
\end{definition}

\begin{remark}  \label{compare-phase}
A straight-forward computation from (\ref{charge}) shows that
\begin{eqnarray} \label{ZEm}
Z_\Omega(E)(m) = \rk(E) \omega^2 \cdot {m^2 \over 2}
+ i (c_1(E) \cdot \omega - \rk(E) \, \beta \omega)m
+ c(E)
\end{eqnarray}
where
\begin{eqnarray} \label{cE}
c(E) = -\ch_2(E) + c_1(E) \cdot \beta - \rk(E) \cdot {\beta^2 \over 2}
\,\, \in \,\, \R.
\end{eqnarray}
It follows that if $E, B \in \mathcal P_\Omega((0, 1]) =
\mathcal A^\sharp_{(\omega, \beta\omega)}$ and $m > 0$, then
$\phi \big (Z_\Omega(E)(m) \big ) > \phi \big (Z_\Omega(B)(m) \big )$
is equivalent to
$\text{\rm Im} \big (\overline{Z_\Omega(E)(m)}
\cdot Z_\Omega(B)(m) \big ) < 0$, i.e.,
\begin{eqnarray}   \label{EB2}
& &{\omega^2 m^2 \over 2} \big ( \rk(E) \, c_1(B) \omega -
    \rk(B) \, c_1(E) \omega \big )   \nonumber  \\
&<&c(B) \, \big ( c_1(E) \omega - \rk(E) \, \beta \omega \big )
    - c(E) \, \big ( c_1(B) \omega - \rk(B) \, \beta \omega \big ).
\end{eqnarray}
\end{remark}

\subsection{\bf Moduli spaces, walls and chambers}
\label{subsect_Moduli}
\par
$\,$

\begin{definition}   \label{Gr}
Let notations be as above.
Fix a numerical type $\mathfrak t = (r, c_1, c_2)$.
\begin{enumerate}
\item[(i)] Let $E \in \mathcal P_\Omega((0, 1]) =
\mathcal A^\sharp_{(\omega, \beta\omega)}$ be
$(Z_\Omega, \mathcal P_\Omega)$-semistable, and let
$$
0 = E_0 \subset E_1 \subset \ldots \subset E_{n-1} \subset E_n = E
$$
be the Jordan-Holder filtration of $E$. Define
\begin{eqnarray}   \label{Gr.1}
\text{\rm Gr}(E) = \bigoplus_{i=1}^n E_i/E_{i-1}.
\end{eqnarray}
Two $(Z_\Omega, \mathcal P_\Omega)$-semistable objects $E_1, E_2 \in
\mathcal P_\Omega((0, 1])$ are defined to be {\it $S$-equivalent} if
$\text{\rm Gr}(E_1) \cong \text{\rm Gr}(E_2)$.
Define $\overline{\mathfrak M}_\Omega(\mathfrak t)$ to be the set of
all $(Z_\Omega, \mathcal P_\Omega)$-semistable objects $E \in
\mathcal P_\Omega((0, 1])$ with $\mathfrak t(E) = \mathfrak t$
modulo $S$-equivalence.

\item[(ii)] For $m > 0$, define the {\it $S$-equivalence with respect
to $(Z_m, \mathcal P_m)$} in a similar fashion as in (i), and define
$\overline{\mathfrak M}_{{\bf u}_m}(\mathfrak t)$
be the set of all $(Z_m, \mathcal P_m)$-semistable objects
$E \in \mathcal A^\sharp_{(\omega, \beta\omega)}$ with
$\mathfrak t(E) = \mathfrak t$ modulo $S$-equivalence.

\item[(iii)] Let $\overline{\mathfrak M}_{\omega}(\mathfrak t)$ be
the moduli space of sheaves $E \in \text{\rm Coh}(X)$ which are
Simpson-semistable with respect to $\omega$ and satisfy $\mathfrak t(E)
= \mathfrak t$.

\item[(iv)] For $r > 0$, define ${\mathfrak M}_{\omega}(\mathfrak t)$ be
the moduli space of locally free sheaves $E$ which are
$\mu_\omega$-stable and satisfy $\mathfrak t(E) = \mathfrak t$.
Define $\overline{\mathfrak U}_{\omega}(\mathfrak t)$ to be the Uhlenbeck
compactification space associated to $\omega$ and $\mathfrak t$.
\end{enumerate}
\end{definition}

\begin{lemma}  \label{00n}
Let $\mathfrak t = (0, 0, n)$ where $n \in \Z_{\ge 0}$. Then,
all the spaces $\overline{\mathfrak M}_\Omega(\mathfrak t)$,
$\overline{\mathfrak M}_{{\bf u}_m}(\mathfrak t)$ and
$\overline{\mathfrak M}_{\omega}(\mathfrak t)$ are identified
with the symmetric product $\text{\rm Sym}^n(X)$.
\end{lemma}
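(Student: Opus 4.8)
The plan is to show that each of the three moduli sets consists of the same objects — namely length-$n$ zero-dimensional torsion sheaves on $X$ — and that $S$-equivalence collapses each such sheaf to its associated $0$-cycle, so that all three are identified with $\Sym^n(X)$.

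First I would pin down $\overline{\mathfrak M}_\Omega(\mathfrak t)$ for $\mathfrak t = (0,0,n)$. By Lemma~\ref{4.2}, a $(Z_\Omega,\mathcal P_\Omega)$-semistable object $E$ with $\rk(E)=0$ cannot be of type (ii); and if it were of type (iii) then $\mathcal H^{-1}(E)$ would be a nonzero torsion free sheaf, forcing $\rk(E)<0$, a contradiction. Hence $E$ must be a torsion sheaf, and $c_1(E)=0$ forces $E$ to be $0$-dimensional; from $c_2(E)=n$ (equivalently $\ch_2(E)=-n$, with the sign conventions of (\ref{cE})) we read off that $E$ has length $n$. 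Conversely, I would check that every $0$-dimensional sheaf of length $n$ lies in $\mathcal A^\sharp_{(\omega,\beta\omega)} = \mathcal P_\Omega((0,1])$ (it sits in $\mathcal T_{(\omega,\beta\omega)}$, being torsion) and is $(Z_\Omega,\mathcal P_\Omega)$-semistable: by (\ref{ZEm}) such a sheaf has $Z_\Omega$-value a negative real constant $c(E)=n>0$ placed at phase $1$, hence has maximal phase among all objects of $\mathcal P_\Omega((0,1])$ with the same real part, so it is automatically semistable. The same computation handles $\overline{\mathfrak M}_{{\bf u}_m}(\mathfrak t)$: by Notation~\ref{ZmPm}, $Z_m(E) = Z_\Omega(E)(m)$, so for a length-$n$ $0$-dimensional sheaf the phase is again constantly $1$ (real value $n>0$), independent of $m$, and the argument that it is $(Z_m,\mathcal P_m)$-semistable is identical; conversely any $(Z_m,\mathcal P_m)$-semistable object of type $(0,0,n)$ must, by the same Lemma~\ref{4.2}-type reasoning applied at finite $m$ (or by Lemma~\ref{4.1} passing to $m\gg 0$), be such a sheaf. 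For $\overline{\mathfrak M}_\omega(\mathfrak t)$, a Simpson-semistable sheaf of numerical type $(0,0,n)$ is precisely a $0$-dimensional sheaf of length $n$ — Simpson semistability is vacuous in dimension $0$ — so the underlying sets coincide.

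Next I would identify the $S$-equivalence classes. For a $0$-dimensional sheaf $E$ of length $n$, the simple objects in the abelian category with $Z_\Omega$-value (or $Z_m$-value) a positive real number are exactly the skyscraper sheaves $\mathcal O_x$, $x\in X$, so the Jordan–Hölder filtration of $E$ in the sense of Definition~\ref{Gr} has graded pieces a multiset of skyscrapers, and $\text{\rm Gr}(E) \cong \bigoplus_x \mathcal O_x^{\oplus \ell_x(E)}$ where $\ell_x(E)=\dim_{\C}(E_x)$ is the local length. Thus $E_1$ and $E_2$ are $S$-equivalent if and only if they determine the same function $x\mapsto \ell_x$, i.e. the same effective $0$-cycle $\sum_x \ell_x(E)\,[x]$ of degree $n$. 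This gives a bijection from $\overline{\mathfrak M}_\Omega(\mathfrak t)$ (and likewise $\overline{\mathfrak M}_{{\bf u}_m}(\mathfrak t)$, and $\overline{\mathfrak M}_\omega(\mathfrak t)$ with its usual description via supports) to $\Sym^n(X)$; one then notes this is the standard set-theoretic (and scheme-theoretic, when relevant) identification of the Simpson moduli space of length-$n$ sheaves with $\Sym^n(X)$, via the Hilbert–Chow-type morphism sending a sheaf to its support cycle.

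The routine parts are the cohomology-sheaf bookkeeping in Lemma~\ref{4.2} and the Chern-class normalization $c_2 = n \Leftrightarrow$ length $n$; these are straightforward. The step requiring a little care — and the one I would flag as the main point — is verifying that \emph{every} length-$n$ $0$-dimensional sheaf is genuinely $(Z_m,\mathcal P_m)$- and $(Z_\Omega,\mathcal P_\Omega)$-semistable, not merely that the semistable ones have this form: for this one uses that any subobject $A \hookrightarrow E$ in $\mathcal A^\sharp_{(\omega,\beta\omega)}$ must itself be a $0$-dimensional torsion sheaf (a subsheaf of $E$ in $\mathrm{Coh}(X)$), whence $\phi(Z(A)) = 1 = \phi(Z(E))$ and no destabilizing subobject exists. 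With all objects accounted for and $S$-equivalence computed, the three identifications with $\Sym^n(X)$ follow and the proof is complete.
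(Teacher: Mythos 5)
Your proposal is correct and follows essentially the same route as the paper's proof: both come down to the facts that the semistable objects of type $(0,0,n)$ (for $Z_\Omega$, for every $Z_m$, and for Simpson stability) are exactly the length-$n$ zero-dimensional torsion sheaves, that the skyscrapers $\mathcal O_x$ are their stable phase-$1$ Jordan--H\"older factors, and that $S$-equivalence therefore records only the support cycle, giving $\mathrm{Sym}^n(X)$ --- the paper merely compresses the two verifications you spell out (every such sheaf is semistable; conversely every semistable object of this type is such a sheaf) into the citations that $\mathcal O_x$ is $(Z_\Omega,\mathcal P_\Omega)$-stable by \cite{Bay} and $(Z_m,\mathcal P_m)$-stable by \cite{Bri2}, together with extension-generation by skyscrapers. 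One bookkeeping correction: for a length-$n$ zero-dimensional sheaf one has $\mathrm{ch}_2(E)=n$ and $Z_m(E)=Z_\Omega(E)(m)=c(E)=-n<0$, a \emph{negative} real number and hence of phase $1$, so your statements ``$\mathrm{ch}_2(E)=-n$'' and ``$c(E)=n>0$ placed at phase $1$'' have the signs reversed (a positive real charge would have phase $0$ and not lie in $\mathcal P_\Omega((0,1])$ at all), though nothing in the structure of your argument depends on this.
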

\begin{proof}
For $\overline{\mathfrak M}_{\omega}(\mathfrak t)$, this follows from
the fact that every $0$-dimensional torsion sheaf is generated by
the skyscraper sheaves $\mathcal O_x, x \in X$ via extensions.
For $\overline{\mathfrak M}_\Omega(\mathfrak t)$ (respectively,
$\overline{\mathfrak M}_{{\bf u}_m}(\mathfrak t)$), note that every
skyscraper sheaf $\mathcal O_x \in \mathcal P_\Omega((0, 1])$ has
phase $1$ and is $(Z_\Omega, \mathcal P_\Omega)$-stable by \cite{Bay}
(respectively, $(Z_m, \mathcal P_m)$-stable by \cite{Bri2}).
\end{proof}

\begin{lemma}  \label{0c1c2}
Let $\Omega = (\omega, \rho, p, U)$ be from
Subsect.~\ref{subsect_Large} with $U = e^{-K_X/2}$.
Let $\mathfrak t = (0, c_1, c_2)$ with $c_1 \ne 0$.
Then, $\overline{\mathfrak M}_\Omega(\mathfrak t)$ is
identified with $\overline{\mathfrak M}_{\omega}(\mathfrak t)$.
\end{lemma}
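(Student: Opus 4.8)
The plan is to reduce the $r=0$, $c_1 \neq 0$ case to the classification provided by Lemma~\ref{4.2}. First I would observe that, by Lemma~\ref{4.2}, any $(Z_\Omega, \mathcal P_\Omega)$-semistable object $E \in \mathcal P_\Omega((0,1])$ with $\rk(E)=0$ must fall into case (i) or case (iii) of that lemma, since cases (ii) requires a torsion-free sheaf of positive rank. But if $E$ is as in case (iii), then $\rk(E) = \rk(\mathcal H^0(E)) - \rk(\mathcal H^{-1}(E)) = 0 - \rk(\mathcal H^{-1}(E)) < 0$ as soon as $\mathcal H^{-1}(E) \neq 0$, while if $\mathcal H^{-1}(E) = 0$ then $E = \mathcal H^0(E)$ is a $0$-dimensional torsion sheaf, forcing $c_1(E) = 0$, contradicting $c_1 \neq 0$. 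Hence every such $E$ with $\mathfrak t(E) = (0, c_1, c_2)$, $c_1 \neq 0$, is a torsion sheaf of pure or mixed dimension one; in fact $c_1 \neq 0$ means its one-dimensional part is non-trivial, and one checks the $0$-dimensional part must also be present in general, so $E$ is simply an arbitrary torsion sheaf with the prescribed numerical invariants.

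The next step is to compare the $(Z_\Omega, \mathcal P_\Omega)$-(semi)stability of such a torsion sheaf $E$ with its Simpson semistability with respect to $\omega$. Here I would use the explicit form of the reduced central charge: from (\ref{ZEm}) and (\ref{cE}), with $\rk(E)=0$ the leading term of $Z_\Omega(E)(m)$ is the degree-one term $i(c_1(E)\cdot\omega) m$, and for two torsion sheaves $E, B$ with $c_1(E)\cdot\omega, c_1(B)\cdot\omega > 0$ the phase comparison $\phi(Z_\Omega(E)(m)) > \phi(Z_\Omega(B)(m))$ for all $m>0$ reduces, via (\ref{EB2}) (whose left-hand side vanishes when $\rk(E)=\rk(B)=0$), to the inequality $c(B)\,c_1(E)\omega - c(E)\,c_1(B)\omega > 0$, i.e. to a comparison of $c(E)/(c_1(E)\cdot\omega)$ with $c(B)/(c_1(B)\cdot\omega)$. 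Since with the choice $U = e^{-K_X/2}$ one has $c(E) = -\ch_2(E) + c_1(E)\cdot(K_X/2) - 0$, and $-\ch_2 = \chi$-type data, this ratio is precisely (up to an affine reparametrization) the Simpson slope $\chi(E)/(c_1(E)\cdot\omega)$ of the one-dimensional sheaf; so the orderings coincide. One must take a little care with subobjects/quotients in $\mathcal A^\sharp_{(\omega,\beta\omega)}$ versus subsheaves — but since all relevant objects here are genuine torsion sheaves and $\mathcal T_{(\omega,\beta\omega)}$ contains all torsion sheaves, short exact sequences of torsion sheaves are short exact in $\mathcal A^\sharp_{(\omega,\beta\omega)}$, so the two notions of subobject agree.

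Having matched stability, I would then match the $S$-equivalence relations: the Jordan--Hölder filtration of a torsion sheaf with respect to $(Z_\Omega, \mathcal P_\Omega)$, by the phase-slope dictionary above, has the same graded pieces as its Simpson Jordan--Hölder filtration, so $\text{\rm Gr}$ in the sense of Definition~\ref{Gr}(i) agrees with the Simpson $\text{\rm Gr}$. This gives a bijection between $\overline{\mathfrak M}_\Omega(\mathfrak t)$ and the point set of $\overline{\mathfrak M}_\omega(\mathfrak t)$; to upgrade it to an isomorphism of (coarse) moduli spaces one invokes that both carry the same moduli functor — families of torsion sheaves flat over the base with fixed $\mathfrak t$ — since the open and closedness conditions cut out by $(Z_\Omega,\mathcal P_\Omega)$-semistability are identical to Simpson semistability fibrewise, and both are corepresented by the same scheme. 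I expect the main obstacle to be the bookkeeping in the case where the $0$-dimensional part is non-trivial: one has to make sure that when $c_1\cdot\omega$ is the sole leading invariant, the $0$-dimensional torsion quotients (which have phase exactly $1$, strictly larger asymptotically) do not destabilize, which forces a genuine pure-dimension-one analysis exactly as in the classical Simpson setting — and to confirm that the constant $U = e^{-K_X/2}$ (rather than a general $e^{-\beta}$) is what makes $c(E)$ align with Euler characteristic rather than a twisted variant.
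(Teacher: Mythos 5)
Your reduction of the rank-zero classification via Lemma~\ref{4.2}, and your identification of the phase ordering with the Simpson slope ordering via Riemann--Roch (this is exactly where $U=e^{-K_X/2}$, i.e.\ $\beta=K_X/2$, enters: $\chi(E\otimes\mathcal O_X(m\omega))=(c_1\omega)m+(\ch_2-c_1\beta)$ while $Z_\Omega(E)(m)=i(c_1\omega)m-(\ch_2-c_1\beta)$), are both correct and are the same mechanism the paper uses. But there is a genuine gap in the direction ``Simpson-semistable $\Rightarrow$ $(Z_\Omega,\mathcal P_\Omega)$-semistable'': your claim that because torsion sheaves lie in $\mathcal T_{(\omega,\beta\omega)}$, ``the two notions of subobject agree'' is false. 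A subobject $A\subset E$ in $\mathcal A^\sharp_{(\omega,\beta\omega)}$ of a torsion sheaf $E$ need not be a subsheaf of $E$: from $0\to A\to E\to B\to 0$ in the tilted heart one only gets the four-term exact sequence of sheaves
$$
0 \to \mathcal H^{-1}(B) \to A \to E \to \mathcal H^0(B) \to 0,
$$
and $\mathcal H^{-1}(B)\in\mathcal F_{(\omega,\beta\omega)}$ can be a nonzero torsion-free sheaf, in which case $A$ is a sheaf of positive rank mapping to $E$ with large kernel. For such $A$ your rank-zero-versus-rank-zero reduction of (\ref{EB2}) does not apply, and Simpson semistability of $E$ says nothing directly about $A$. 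This is precisely the case the paper treats separately: when $\mathcal H^{-1}(B)\ne 0$ one has $\rk(A)=\rk(\mathcal H^{-1}(B))>0$, so the quadratic term of (\ref{EB2}) is $-\tfrac{\omega^2m^2}{2}\rk(A)\,c_1\omega<0$ (using $c_1\omega>0$), forcing $\phi(Z_\Omega(A)(m))<\phi(Z_\Omega(E)(m))$ for $m\gg 0$ regardless of $c(A)$. Without this case your converse direction is incomplete; the fix is easy but must be stated.

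Two smaller points. First, your remark that the $0$-dimensional ``torsion quotients'' must not destabilize is inverted: quotients of phase $1$ never destabilize; the relevant issue is $0$-dimensional \emph{subobjects}, which have phase $1$ and do destabilize any $E$ with $c_1\omega>0$ --- this is what forces a $(Z_\Omega,\mathcal P_\Omega)$-semistable $E$ to be pure of dimension one, matching the purity built into Simpson semistability (and it also shows your earlier sentence that $E$ is ``an arbitrary torsion sheaf'' with a possibly nonzero $0$-dimensional part cannot stand for semistable $E$). Second, the upgrade to an identification of moduli functors is more than the statement requires: $\overline{\mathfrak M}_\Omega(\mathfrak t)$ is defined in Definition~\ref{Gr} only as a set of semistable objects modulo $S$-equivalence, so matching semistable objects (and, if you wish, Jordan--H\"older factors) is all that is needed, and is all the paper does.
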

\begin{proof}
We may let $c_1 > 0$. By the proof of Lemma~4.2 in \cite{Bay}, if $E \in
\overline{\mathfrak M}_\Omega(\mathfrak t)$ is
$(Z_\Omega, \mathcal P_\Omega)$-semistable, then $E \in
\overline{\mathfrak M}_{\omega}(\mathfrak t)$ is Simpson-semistable
with respect to $\omega$.

Conversely, let $E \in \overline{\mathfrak M}_{\omega}(\mathfrak t)$
be Simpson-semistable with respect to $\omega$. Note that
\begin{eqnarray}
   \chi(E \otimes \mathcal O_X(m\omega))
&=&\,\, (c_1 \omega)m + (\ch_2 - c_1 \beta)  \label{0c1c2.1}  \\
   Z_\Omega(E)(m)
&=&i (c_1 \omega)m - (\ch_2 - c_1 \beta)   \label{0c1c2.2}
\end{eqnarray}
where $\beta = K_X/2$ and $\ch_2 = c_1^2/2 - c_2$.
Let $A$ be any proper sub-object of $E$ in $\mathcal P_\Omega((0, 1])
= \mathcal A^\sharp_{(\omega, \beta\omega)}$, and let $B = E/A$.
Then we have the exact sequence $0 \to A \to E \to B \to 0$
in $\mathcal P_\Omega((0, 1])$. Thus, $A$ is a sheaf in
$\mathcal T_{(\omega, \beta\omega)}$ sitting in
\begin{eqnarray}   \label{0c1c2.3}
0 \to \mathcal H^{-1}(B) \to A \to E \to \mathcal H^0(B) \to 0.
\end{eqnarray}
If $\mathcal H^{-1}(B) = 0$, then $A$ is a proper subsheaf of $E$.
By (\ref{0c1c2.1}),
$$
(\ch_2(A) - c_1(A) \beta)/(c_1(A)\omega) \le
(\ch_2 - c_1 \beta)/(c_1 \omega)
$$
since $E$ is Simpson-semistable with respect to $\omega$.
By (\ref{0c1c2.2}), $\phi \big (Z_\Omega(E)(m) \big )
\ge \phi \big (Z_\Omega(A)(m) \big )$ for all $m > 0$.
Assume that $\mathcal H^{-1}(B) \ne 0$. By (\ref{0c1c2.3}),
we obtain $\rk(A) = \rk \big ( \mathcal H^{-1}(B) \big ) > 0$.
So we conclude from (\ref{EB2}) that $\phi \big (Z_\Omega(E)(m) \big )
> \phi \big (Z_\Omega(A)(m) \big )$ for $m \gg 0$.
Therefore, $E \in \overline{\mathfrak M}_\Omega(\mathfrak t)$ is
$(Z_\Omega, \mathcal P_\Omega)$-semistable.
\end{proof}

\begin{definition}  \label{wall-chambers}
(see \cite{LQ})
Let $\mathbb C_X \subset \text{\rm Num}(X)_\R$ be the ample cone of
the smooth projective surface $X$. Fix a numerical type
$\mathfrak t = (r, c_1, c_2)$ on $X$.
\begin{enumerate}
\item[(i)] For a class $\xi \in \text{\rm Num}(X) \otimes \mathbb R$,
we define
\begin{eqnarray}  \label{wall-chambers.1}
W^\xi = \mathbb C_X \cap \{ \alpha \in \text{\rm Num}(X)_\R|
\,\, \alpha \cdot \xi = 0\}.
\end{eqnarray}

\item[(ii)] Let $\mathcal W(\mathfrak t)$ be the set whose elements are
of the form $W^\xi$, where $\xi$ is the numerical equivalence class
$(rF - s c_1)$ for some divisor $F$ and some integer $s$ with
$0 < s < |r|$ satisfying the inequalities:
\begin{eqnarray}           \label{wall-chambers.2}
-{r^2 \over 4} \big ( 2r c_2 - (r-1) c_1^2\big ) \le \xi^2 < 0.
\end{eqnarray}

\item[(iii)] A {\it wall of type $\mathfrak t$} is an element in
$\mathcal W(\mathfrak t)$, while a {\it chamber of type
$\mathfrak t$} is a connected component in the complement
$\mathbb C_X - \mathcal W(\mathfrak t)$.
\end{enumerate}
\end{definition}

It is well-known that the set $\mathcal W(\mathfrak t)$ of walls of
type $\mathfrak t$ is locally finite, i.e.,
given a compact subset $K$ of $\mathbb C_X$,
there are only finitely many walls $W$ of type $\mathfrak t$
such that $W \cap K \ne \emptyset$. In addition,
$\xi$ defines a wall of type $\mathfrak t$ if and only if
it defines a wall of type $(\w r, \w c_1, \w c_2)$ where $\w r = -r$
and $1 + \w c_1 + \w c_2 = (1+c_1+c_2)^{-1} \in A^*(X)$.

Fix $\mathfrak t = (r, c_1, c_2)$ with $r > 0$. Then
the Simpson-semistability is the same as the Gieseker-semistability.
If $\omega_1$ and $\omega_2$ are contained in the same chamber of
type $\mathfrak t$, then $\overline{\mathfrak M}_{\omega_1}(\mathfrak t)
= \overline{\mathfrak M}_{\omega_2}(\mathfrak t)$.
If $\omega$ is contained in a chamber of type $\mathfrak t$,
then $E$ is $\mu_\omega$-stable whenever it is $\mu_\omega$-semistable
and $\mathfrak t(E) = (r, c_1, c_2')$ with $c_2' \le c_2$. In this case,
\begin{eqnarray}   \label{uhlenbeck}
\overline{\mathfrak U}_{\omega}(r, c_1, c_2) =
\bigoplus_{c_2' \le c_2} {\mathfrak M}_{\omega}(r, c_1, c_2')
\times \text{\rm Sym}^{c_2-c_2'}(X).
\end{eqnarray}
It was proved in \cite{Li1, Li2, Mor} that
$\overline{\mathfrak U}_{\omega}(\mathfrak t)$ is a projective variety.

\section{\bf Locally finiteness of mini-walls of a fixed type}
\label{sect_Locally}

In this section, we define and study the mini-walls of a fixed type
$(\mathfrak t, \beta, \omega)$ for Bridgeland stability conditions.
We will prove that the set of the mini-walls of a fixed type
$(\mathfrak t, \beta, \omega)$ is locally finite.

\begin{definition}   \label{leading}
Let $\sigma$ be a Bridgeland stability condition on $D^b(X)$. Let
$$
0 = E_0 \subset E_1 \subset \ldots E_{n-1} \subset E_n = E
$$
be the HN-filtration of $E \in \mathcal D^b(X)$ with respect to $\sigma$.
We define $E_1$ to be the {\it leading HN-filtration component of $E$
with respect to $\sigma$}.
\end{definition}

\begin{definition}   \label{mini-walls}
Let ${\bf u} = e^{-(\beta + i \, \omega)}$ with $\omega$ being ample.
Fix a numerical type $\mathfrak t = (r, c_1, c_2)$, and fix a subset
$I$ of $(0, +\infty)$.
\begin{enumerate}
\item[(i)] A {\it mini-wall of type $(\mathfrak t, \beta,
\omega)$ in $I$} is a number $m_0 \in I$ such that
$$\phi \big (Z_{m_0}(A) \big ) = \phi \big (Z_{m_0}(E) \big )$$
where $E \in \overline{\mathfrak M}_{{\bf u}_{m_1}}(\mathfrak t)$
for some $m_1 \in I$,
$E \not \in \overline{\mathfrak M}_{{\bf u}_{m_2}}(\mathfrak t)$
for some $m_2 \in I$, and $A$ is the leading HN-filtration component
of $E$ with respect to $(Z_{m_2}, \mathcal P_{m_2})$.

\item[(ii)] A {\it mini-chamber of type $(\mathfrak t, \beta, \omega)$
in $I$} is a connected component of
$$
I \, - \, \{m|\, m \,\, \text{\rm is a mini-wall of
type $(\mathfrak t, \beta, \omega)$ in $I$} \}.
$$
\end{enumerate}
\end{definition}

\begin{remark}   \label{rmk:mini-walls}
(i) Unlike Definition~\ref{wall-chambers}~(iii), our definitions of
mini-walls and mini-chambers depends on subsets $I$ of
$(0, +\infty)$. These dependences are consistent with
the Proposition~9.3 in \cite{Bri2} where when $X$ is a $K3$ surface,
walls and chambers are defined for compact subsets
in the space of Bridgeland stability conditions.

(ii) Let $I_1 \subset I_2 \subset (0, +\infty)$. If $m$ is
a mini-wall of type $(\mathfrak t, \beta, \omega)$ in $I_1$,
then $m$ is a mini-wall of type $(\mathfrak t, \beta, \omega)$
in $I_2$. However, the converse may not be true.

(iii) Let $\mathfrak t = (0, 0, c_2)$, and let $I \subset (0, +\infty)$
be connected. By Lemma~\ref{00n}, all the spaces
$\overline{\mathfrak M}_{{\bf u}_m}(\mathfrak t)$ with $m > 0$ are
identical. Hence no mini-walls of type $(\mathfrak t, \beta, \omega)$
in $I$ exist, and the only mini-chamber of type
$(\mathfrak t, \beta, \omega)$ in $I$ is $I$ itself.
\end{remark}

\begin{lemma} \label{chamber-moduli}
Let $\mathfrak C$ be a mini-chamber of type $(\mathfrak t, \beta,
\omega)$ in $I$. If $m_1, m_2 \in \mathfrak C$, then
$$
\overline{\mathfrak M}_{{\bf u}_{m_1}}(\mathfrak t) =
\overline{\mathfrak M}_{{\bf u}_{m_2}}(\mathfrak t).
$$
\end{lemma}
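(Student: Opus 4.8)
The plan is to show that within a single mini-chamber $\mathfrak C$, no object of numerical type $\mathfrak t$ can change its (semi)stability status, so that the set of semistable objects — and hence the moduli space — is constant. Suppose for contradiction that $m_1, m_2 \in \mathfrak C$ but $\overline{\mathfrak M}_{{\bf u}_{m_1}}(\mathfrak t) \ne \overline{\mathfrak M}_{{\bf u}_{m_2}}(\mathfrak t)$. After possibly relabeling, I may assume there is an object $E$ with $\mathfrak t(E) = \mathfrak t$ that is $(Z_{m_1}, \mathcal P_{m_1})$-semistable but not $(Z_{m_2}, \mathcal P_{m_2})$-semistable. (Strictly, $S$-equivalence is also part of the data defining the moduli set, so I should note that if the \emph{sets} of semistable objects agree as $m$ varies in $\mathfrak C$ then the Jordan--Hölder filtrations, being determined by phase comparisons via (\ref{EB2}), also agree; I will return to this point below.) Let $A$ be the leading HN-filtration component of $E$ with respect to $(Z_{m_2}, \mathcal P_{m_2})$. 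Then $\phi(Z_{m_2}(A)) > \phi(Z_{m_2}(E))$, whereas $A$, being a subobject of $E$ in $\mathcal A^\sharp_{(\omega,\beta\omega)}$, satisfies $\phi(Z_{m_1}(A)) \le \phi(Z_{m_1}(E))$ by $(Z_{m_1},\mathcal P_{m_1})$-semistability of $E$.

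The key step is then an intermediate value argument. By Remark~\ref{compare-phase}, the strict inequality $\phi(Z_m(A)) > \phi(Z_m(E))$ is equivalent to the polynomial inequality (\ref{EB2}), whose two sides are polynomials in $m$ (in fact the left side is a multiple of $m^2$ and the right side is constant). Since (\ref{EB2}) fails at $m_1$ (where we have $\le$) and holds at $m_2$ (where we have $>$), the function
$$
f(m) = c(B)\big(c_1(E)\omega - \rk(E)\beta\omega\big) - c(E)\big(c_1(B)\omega - \rk(B)\beta\omega\big) - {\omega^2 m^2 \over 2}\big(\rk(E)c_1(B)\omega - \rk(B)c_1(E)\omega\big),
$$
with $B = E/A$, changes sign on the closed interval with endpoints $m_1, m_2$, hence has a zero $m_0$ strictly between them (or equals zero at an endpoint). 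At that $m_0$ we get $\phi(Z_{m_0}(A)) = \phi(Z_{m_0}(E))$. Since $m_1, m_2 \in \mathfrak C$ and $\mathfrak C$ is connected, $m_0 \in \mathfrak C \subset I$. But the data $E$, $m_1$, $m_2$, and $A$ now witness precisely that $m_0$ is a mini-wall of type $(\mathfrak t, \beta, \omega)$ in $I$: $E \in \overline{\mathfrak M}_{{\bf u}_{m_1}}(\mathfrak t)$, $E \notin \overline{\mathfrak M}_{{\bf u}_{m_2}}(\mathfrak t)$, and $A$ is the leading HN-component of $E$ with respect to $(Z_{m_2}, \mathcal P_{m_2})$. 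This contradicts $m_0 \in \mathfrak C$, since a mini-chamber is by definition disjoint from all mini-walls in $I$.

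The main obstacle I anticipate is handling the boundary case where the sign change is not strict — i.e.\ where (\ref{EB2}) is an \emph{equality} at $m_1$ rather than a strict inequality, so $\phi(Z_{m_1}(A)) = \phi(Z_{m_1}(E))$ already. In that situation $m_1$ itself is the mini-wall, and one argues directly that $m_1 \in \mathfrak C$ is impossible. A second, more bookkeeping-type subtlety is the $S$-equivalence issue flagged above: to conclude $\overline{\mathfrak M}_{{\bf u}_{m_1}}(\mathfrak t) = \overline{\mathfrak M}_{{\bf u}_{m_2}}(\mathfrak t)$ as sets of $S$-equivalence classes, I must also rule out that two objects that are semistable throughout $\mathfrak C$ become $S$-equivalent at one value of $m$ but not another. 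This again reduces, via (\ref{EB2}), to a polynomial having a zero in $\mathfrak C$: a Jordan--Hölder factor $E_i/E_{i-1}$ has phase strictly less than that of $E$ at some $m$ and equal at another forces an interior zero, which would again be a mini-wall in $I$ meeting $\mathfrak C$ — contradiction. Neither obstacle is serious; both are resolved by the same intermediate-value-on-a-polynomial mechanism, together with the fact that $\mathfrak C$ is connected and avoids mini-walls.
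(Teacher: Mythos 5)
Your argument is correct and is essentially the paper's own proof: assume $E$ is semistable at $m_1$ but not at $m_2$, take the leading HN-filtration component $A$ at $m_2$, compare phases at $m_1$ and $m_2$, and use connectedness of $\mathfrak C$ (the intermediate value property of the quadratic expression from (\ref{EB2})) to produce an $m_0 \in \mathfrak C$ with $\phi(Z_{m_0}(A)) = \phi(Z_{m_0}(E))$, which is by definition a mini-wall inside the mini-chamber, a contradiction. Your extra remarks on the boundary case and on $S$-equivalence go beyond what the paper records (it implicitly treats $\overline{\mathfrak M}_{{\bf u}_m}(\mathfrak t)$ as the set of semistable objects), but they do not change the route.
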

\begin{proof}
By symmetry, it suffices to show that if
$E \in \overline{\mathfrak M}_{{\bf u}_{m_1}}(\mathfrak t)$, then
$E \in \overline{\mathfrak M}_{{\bf u}_{m_2}}(\mathfrak t)$. Assume that
$E \not \in \overline{\mathfrak M}_{{\bf u}_{m_2}}(\mathfrak t)$.
Let $A$ be the leading HN-filtration component of $E$ with respect
to $(Z_{m_2}, \mathcal P_{m_2})$. Then
$\phi \big (Z_{m_2}(A) \big ) > \phi \big (Z_{m_2}(E) \big )$.
Since $E \in \overline{\mathfrak M}_{{\bf u}_{m_1}}(\mathfrak t)$,
we have $\phi \big (Z_{m_2}(A) \big ) \le \phi \big (Z_{m_2}(E) \big )$.
Since $\mathfrak C$ is connected, $\phi \big (Z_{m_0}(A) \big ) =
\phi \big (Z_{m_0}(E) \big )$ for some $m_0 \in \mathfrak C$. Thus by
definition, $m_0$ is a mini-wall of type $(\mathfrak t, \beta, \omega)$
in $I$. This is impossible since the mini-chamber $\mathfrak C$ can
not contain any mini-wall.
\end{proof}

\begin{lemma} \label{quotient}
Let $\omega \in {\rm Num}(X)_\R$ be ample, and let $v \in \R$.
Let $E \in \mathcal A^\sharp_{(\omega, v)}$.

{\rm (i)} Let $f: \mathcal H^0(E) \to B$ be a surjection in
$\text{\rm Coh}(X)$ with $\ker(f), B \in \mathcal T_{(\omega, v)}$.
Then in $\mathcal A^\sharp_{(\omega, v)}$, there exists an
exact sequence of the form
\begin{eqnarray}   \label{quotient.01}
0 \to A \to E \to B \to 0.
\end{eqnarray}

{\rm (ii)} Let $g: A \to \mathcal H^{-1}(E)$ be an injection in
$\text{\rm Coh}(X)$ with $A, \text{\rm coker}(g) \in
\mathcal F_{(\omega, v)}$.
Then in $\mathcal A^\sharp_{(\omega, v)}$, there exists an
exact sequence of the form
\begin{eqnarray}   \label{quotient.02}
0 \to A[1] \to E \to B \to 0.
\end{eqnarray}
\end{lemma}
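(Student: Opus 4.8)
The plan is to deduce both parts from one elementary observation about tilted hearts: a distinguished triangle in $\mathcal D^b(X)$ whose three vertices all lie in $\mathcal A^\sharp_{(\omega,v)}$ is automatically a short exact sequence in $\mathcal A^\sharp_{(\omega,v)}$. Before applying this, I would record the two bookkeeping facts that make it usable. First, by \eqref{des-tilt}, a coherent sheaf $S$ (placed in degree $0$) lies in $\mathcal A^\sharp_{(\omega,v)}$ exactly when $S \in \mathcal T_{(\omega,v)}$, and $S[1]$ lies in $\mathcal A^\sharp_{(\omega,v)}$ exactly when $S \in \mathcal F_{(\omega,v)}$. Second, applying the observation to the truncation triangle $\mathcal H^{-1}(E)[1] \to E \to \mathcal H^0(E) \to \mathcal H^{-1}(E)[2]$ of $E$ --- legitimate since $\mathcal H^{-1}(E) \in \mathcal F_{(\omega,v)}$ and $\mathcal H^0(E) \in \mathcal T_{(\omega,v)}$, so all three vertices are in the heart --- shows that the canonical map $E \to \mathcal H^0(E)$ is an epimorphism and the canonical map $\mathcal H^{-1}(E)[1] \to E$ is a monomorphism in $\mathcal A^\sharp_{(\omega,v)}$.

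For part (i), I would start from the short exact sequence of sheaves $0 \to \ker(f) \to \mathcal H^0(E) \to B \to 0$. By hypothesis all three terms lie in $\mathcal T_{(\omega,v)}$, hence in $\mathcal A^\sharp_{(\omega,v)}$, so the associated triangle is exact in $\mathcal A^\sharp_{(\omega,v)}$ and in particular $f \colon \mathcal H^0(E) \to B$ is an epimorphism there. Composing with the epimorphism $E \to \mathcal H^0(E)$ produces an epimorphism $E \to B$ in $\mathcal A^\sharp_{(\omega,v)}$, and taking $A$ to be its kernel yields the sequence \eqref{quotient.01}.

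Part (ii) is dual: I would start from the short exact sequence of sheaves $0 \to A \to \mathcal H^{-1}(E) \to \text{\rm coker}(g) \to 0$, whose three terms lie in $\mathcal F_{(\omega,v)}$ by hypothesis. Shifting by $[1]$ gives a triangle with all three vertices $A[1]$, $\mathcal H^{-1}(E)[1]$, $\text{\rm coker}(g)[1]$ in $\mathcal A^\sharp_{(\omega,v)}$, so $0 \to A[1] \to \mathcal H^{-1}(E)[1] \to \text{\rm coker}(g)[1] \to 0$ is exact in $\mathcal A^\sharp_{(\omega,v)}$ and $A[1] \to \mathcal H^{-1}(E)[1]$ is a monomorphism. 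Composing with the monomorphism $\mathcal H^{-1}(E)[1] \to E$ gives a monomorphism $A[1] \to E$, and letting $B$ be its cokernel yields \eqref{quotient.02}.

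The argument is pure t-structure bookkeeping, so there is no genuine obstacle; the only points deserving care are keeping the direction of the tilt straight (so that objects of $\mathcal T_{(\omega,v)}$ appear as quotients of $E$ while shifts of objects of $\mathcal F_{(\omega,v)}$ appear as subobjects) and noticing that the hypotheses $\ker(f) \in \mathcal T_{(\omega,v)}$ and $\text{\rm coker}(g) \in \mathcal F_{(\omega,v)}$ are precisely what is needed to place the auxiliary triangles inside the heart.
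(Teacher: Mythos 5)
Your argument is correct and is essentially the same as the paper's proof: both treat the exact sequence $0 \to \ker(f) \to \mathcal H^0(E) \to B \to 0$ (respectively its shifted analogue for $\mathcal F_{(\omega,v)}$) as an exact sequence in the heart, compose with the canonical epimorphism $E \to \mathcal H^0(E)$ (respectively the monomorphism $\mathcal H^{-1}(E)[1] \to E$), and take the kernel (respectively cokernel). Your explicit justification of these canonical maps via the truncation triangle merely spells out a step the paper leaves implicit, and your treatment of (ii) writes out the dual argument the paper dismisses as ``similar.''
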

\begin{proof}
Since (ii) can be proved similarly, we will only prove (i) below.
To prove (i), note from Definition~\ref{TFA} that
$\mathcal T_{(\omega, v)} \subset \mathcal A^\sharp_{(\omega, v)}$.
So $\ker(f), \mathcal H^0(E), B \in \mathcal T_{(\omega, v)}
\subset \mathcal A^\sharp_{(\omega, v)}$. It follows that the exact sequence
$0 \to \ker(f) \to \mathcal H^0(E) \to B \to 0$ in $\text{\rm Coh}(X)$ is
an exact sequence in $\mathcal A^\sharp_{(\omega, v)}$. In particular,
we have a surjection $\mathcal H^0(E) \to B$ in $\mathcal A^\sharp_{(\omega, v)}$.
Composing with the surjection $E \to \mathcal H^0(E)$, we obtain a surjection
$E \to B$ in $\mathcal A^\sharp_{(\omega, v)}$. Letting $A$ be the kernel of
$E \to B$ in $\mathcal A^\sharp_{(\omega, v)}$, we obtain (\ref{quotient.01}).
\end{proof}

\begin{lemma}       \label{En}
Let $\omega \in {\rm Num}(X)_\R$ be ample, and let $v \in \R$.
\begin{enumerate}
\item[(i)] If $E \in \mathcal A^\sharp_{(\omega, v)}$,
then $c_1(E) \omega \ge \rk(E) v$.

\item[(ii)] Fix a numerical type $\mathfrak t = (r, c_1, c_2)$.
Let $\{E_n\}_{n=1}^{+\infty}$ be a sequence of objects in
$\mathcal A^\sharp_{(\omega, v)}$.
Assume that for each $n$, there exists an exact sequence
$$0 \to A_n \to E_n \to B_n \to 0$$ in $\mathcal A^\sharp_{(\omega, v)}$.
Then, $\lim_{n \to +\infty} c_1(A_n) \omega/\rk(A_n) = v$
if $\lim_{n \to +\infty} \rk(A_n) = \infty$, and
$\lim_{n \to +\infty} c_1(B_n) \omega/\rk(B_n) = v$
if $\lim_{n \to +\infty} \rk(B_n) = \infty$.
\end{enumerate}
\end{lemma}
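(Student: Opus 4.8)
The plan is to prove (i) directly from the description (\ref{des-tilt}) of the tilted heart, and then to deduce (ii) from (i) by a squeezing argument. Throughout (ii) I take $\mathfrak t(E_n) = \mathfrak t$ for all $n$ (as the phrase ``fix a numerical type'' indicates), so that $\rk(E_n) = r$ and $c_1(E_n)\omega = c_1\omega$ do not depend on $n$; note also that $\rk$ and $c_1(\cdot)\omega$ are additive on the objects of $\mathcal A^\sharp_{(\omega,v)}$, since a short exact sequence there underlies a distinguished triangle in $\mathcal D^b(X)$.

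For (i), every $E \in \mathcal A^\sharp_{(\omega,v)}$ fits into a short exact sequence
$$0 \to \mathcal H^{-1}(E)[1] \to E \to \mathcal H^0(E) \to 0$$
in $\mathcal A^\sharp_{(\omega,v)}$ with $\mathcal H^{-1}(E) \in \mathcal F_{(\omega,v)}$ and $\mathcal H^0(E) \in \mathcal T_{(\omega,v)}$, so by additivity it suffices to verify the inequality $c_1\omega \ge \rk\cdot v$ on each of the two outer terms. The subcategory $\mathcal T_{(\omega,v)}$ is generated under extensions by torsion sheaves (for which $\rk = 0$ and $c_1\omega \ge 0$ because $\omega$ is ample) and by torsion free $\mu_\omega$-stable sheaves $A$ with $\rk(A) > 0$ and $c_1(A)\omega = \rk(A)\mu_\omega(A) > \rk(A)v$; since $c_1\omega \ge \rk\cdot v$ is stable under extensions, it holds for $\mathcal H^0(E)$. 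Dually, $\mathcal F_{(\omega,v)}$ is generated under extensions by torsion free $\mu_\omega$-stable sheaves $A$ with $c_1(A)\omega \le \rk(A)v$, so every $G \in \mathcal F_{(\omega,v)}$ satisfies $c_1(G)\omega \le \rk(G)v$; applying the shift $[1]$ negates both $\rk$ and $c_1(\cdot)\omega$, hence $c_1(\mathcal H^{-1}(E)[1])\omega = -c_1(\mathcal H^{-1}(E))\omega \ge -\rk(\mathcal H^{-1}(E))v = \rk(\mathcal H^{-1}(E)[1])\cdot v$. Adding the two estimates gives $c_1(E)\omega \ge \rk(E)v$.

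For (ii), apply (i) to $A_n$ and to $B_n$ to get $c_1(A_n)\omega \ge \rk(A_n)v$ and $c_1(B_n)\omega \ge \rk(B_n)v$, and combine these with $\rk(A_n)+\rk(B_n) = r$ and $c_1(A_n)\omega + c_1(B_n)\omega = c_1\omega$. This produces the two-sided bound
$$\rk(A_n)v \,\le\, c_1(A_n)\omega = c_1\omega - c_1(B_n)\omega \,\le\, c_1\omega - \rk(B_n)v = \rk(A_n)v + (c_1\omega - rv),$$
so dividing by $\rk(A_n)$ and letting $\rk(A_n) \to \infty$ forces $c_1(A_n)\omega/\rk(A_n) \to v$ by the squeeze theorem (the same computation works whether the rank tends to $+\infty$ or $-\infty$, since the outer terms still pinch together); interchanging the roles of $A_n$ and $B_n$ gives the assertion for $B_n$. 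The argument is largely bookkeeping; the only delicate points are that torsion sheaves force the inequality in (i) to be non-strict, and that the shift $[1]$ in the definition of the tilt reverses the signs of both $\rk$ and $c_1(\cdot)\omega$---precisely what makes the two halves of the tilting sequence contribute inequalities pointing the same way.
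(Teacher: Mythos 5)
Your proof is correct and follows essentially the same route as the paper: part (i) via the tilting sequence $0 \to \mathcal H^{-1}(E)[1] \to E \to \mathcal H^0(E) \to 0$ together with the slope inequalities defining $\mathcal T_{(\omega,v)}$ and $\mathcal F_{(\omega,v)}$, and part (ii) by applying (i) to both $A_n$ and $B_n$ and squeezing using additivity of $\rk$ and $c_1(\cdot)\omega$. Your extra remarks (verifying the generator inequalities are preserved under extensions, and that the squeeze works when the rank tends to $-\infty$) only make explicit what the paper leaves implicit.
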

\begin{proof}
(i) Note that every $E \in \mathcal A^\sharp_{(\omega, v)}$ sits in
the exact sequence
\begin{eqnarray}   \label{En.1}
0 \to \mathcal H^{-1}(E)[1] \to E \to \mathcal H^0(E) \to 0
\end{eqnarray}
in $\mathcal A^\sharp_{(\omega, v)}$. Since $\mathcal H^{-1}(E) \in
\mathcal F_{(\omega, v)}$ and $\mathcal H^0(E) \in
\mathcal T_{(\omega, v)}$, we have $c_1 \big ( \mathcal H^{-1}(E)
\big ) \omega \le \rk \big ( \mathcal H^{-1}(E) \big ) v$ and
$c_1 \big ( \mathcal H^0(E) \big ) \omega \ge \rk \big (
\mathcal H^0(E) \big ) v$. So $c_1(E) \omega \ge \rk(E) v$.

(ii) Since the second statement can be proved similarly, we will
only prove the first statement. Assume that $\lim_{n \to +\infty}
\rk(A_n) = \infty$. By (i), we have $c_1(A_n) \omega \ge \rk(A_n) v$
and $c_1(B_n) \omega \ge \rk(B_n) v$. Therefore, we conclude that
$$
\rk(A_n) v \le c_1(A_n) \omega = c_1 \omega - c_1(B_n) \omega
\le c_1 \omega - \rk(B_n) v = (c_1 \omega - rv) + \rk(A_n) v.
$$
Since $\lim_{n \to +\infty} \rk(A_n) = \infty$, it follows that
$\lim_{n \to +\infty} c_1(A_n) \omega/\rk(A_n) = v$.
\end{proof}

\begin{lemma} \label{compare-bnd}
Let $\alpha , \omega \in \text{\rm Num}(X)_\R$ with $\omega$ being
ample.
\begin{enumerate}
\item[(i)] If $c \le \alpha \cdot \omega \le d$, then $\alpha^2
\le \text{\rm max}\{c^2, d^2\}/\omega^2$.

\item[(ii)] Let $B$ be a torsion free $\mu_\omega$-semistable sheaf.
If $c \le \mu_\omega(B) \le d$, then $c(B)/\rk(B)$ is bounded from
below by a constant depending only on $c, d, \omega, \beta$.
\end{enumerate}
\end{lemma}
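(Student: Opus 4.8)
The plan is to prove (i) first, as (ii) will reduce to it. For (i), I would use the Hodge index theorem. Since $\omega$ is ample, $\omega^2 > 0$, so on the orthogonal complement of $\omega$ in $\text{\rm Num}(X)_\R$ the intersection form is negative definite. Decompose $\alpha = t\omega + \delta$ with $\delta \cdot \omega = 0$ and $t = (\alpha\cdot\omega)/\omega^2$. Then $\alpha^2 = t^2\omega^2 + \delta^2 \le t^2 \omega^2 = (\alpha\cdot\omega)^2/\omega^2$. Since $c \le \alpha\cdot\omega \le d$, we have $(\alpha\cdot\omega)^2 \le \max\{c^2, d^2\}$, and hence $\alpha^2 \le \max\{c^2,d^2\}/\omega^2$. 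The only care needed is that nothing forces $c, d$ to have the same sign, which is exactly why the statement uses $\max\{c^2, d^2\}$ rather than, say, $d^2$.

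For (ii), recall from (\ref{cE}) that $c(B) = -\ch_2(B) + c_1(B)\cdot\beta - \rk(B)\beta^2/2$, so that
\begin{eqnarray*}
\frac{c(B)}{\rk(B)} = -\frac{\ch_2(B)}{\rk(B)} + \frac{c_1(B)\cdot\beta}{\rk(B)} - \frac{\beta^2}{2}.
\end{eqnarray*}
The term $-\beta^2/2$ is a fixed constant. The middle term $c_1(B)\cdot\beta/\rk(B) = \mu_\omega(B)\cdot(\text{something})$ is not literally of that form, but I would bound it using $\alpha := c_1(B)/\rk(B)$: we know $c \le \alpha\cdot\omega \le d$ by hypothesis, so by part (i) we have $\alpha^2 \le \max\{c^2, d^2\}/\omega^2$; then the Cauchy--Schwarz-type estimate $|\alpha\cdot\beta| \le$ (a constant times $\sqrt{\alpha^2}$, via Hodge index applied to the span of $\omega$ and $\beta$, or more crudely $(\alpha\cdot\beta)^2 \le$ bounded using that $\alpha$ lies in a bounded region once $\alpha\cdot\omega$ and $\alpha^2$ are both controlled) gives a bound on $|c_1(B)\cdot\beta/\rk(B)|$ depending only on $c, d, \omega, \beta$. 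For the leading term, Bogomolov's inequality for $\mu_\omega$-semistable torsion free sheaves gives $2\rk(B)\,c_2(B) \ge (\rk(B)-1)\,c_1(B)^2$, equivalently $\ch_2(B)/\rk(B) = c_1(B)^2/(2\rk(B)^2) - c_2(B)/\rk(B) \le c_1(B)^2/(2\rk(B)^2) \cdot(1) $; combining with $c_1(B)^2/\rk(B)^2 = \alpha^2 \le \max\{c^2,d^2\}/\omega^2$ bounds $\ch_2(B)/\rk(B)$ from above, hence $-\ch_2(B)/\rk(B)$ from below. Assembling the three bounds yields a lower bound on $c(B)/\rk(B)$ depending only on $c, d, \omega, \beta$.

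The main obstacle I anticipate is the middle term $c_1(B)\cdot\beta/\rk(B)$: unlike the other two it can a priori be negative and large in absolute value, so I must genuinely use the Hodge index theorem a second time to control $\alpha\cdot\beta$ in terms of $\alpha^2$ and $\alpha\cdot\omega$ (both of which are bounded). Concretely, writing $\beta = s\omega + \beta'$ with $\beta'\cdot\omega = 0$, we get $\alpha\cdot\beta = s(\alpha\cdot\omega) + \alpha\cdot\beta'$, where $s$ depends only on $\beta,\omega$; the first summand is bounded by hypothesis, and for the second, since $\alpha\cdot\beta' = (\alpha - t\omega)\cdot\beta' = \delta\cdot\beta'$ with both $\delta,\beta'$ in the negative-definite orthogonal complement of $\omega$, the genuine Cauchy--Schwarz inequality there gives $|\delta\cdot\beta'| \le \sqrt{(-\delta^2)(-\beta'^2)}$, and $-\delta^2 = t^2\omega^2 - \alpha^2$ is bounded since both $t$ and $\alpha^2$ are. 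This is a routine but slightly fiddly estimate; everything else is immediate from Bogomolov and the already-established part (i).
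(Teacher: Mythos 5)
Your part (i) is exactly the paper's argument and is fine, and your use of Bogomolov to bound $-\ch_2(B)/\rk(B)$ from below by $-\alpha^2/2 \ge -\max\{c^2,d^2\}/(2\omega^2)$ with $\alpha = c_1(B)/\rk(B)$ is also correct. The gap is in your treatment of the middle term $c_1(B)\cdot\beta/\rk(B)$. You assert that $\alpha$ ``lies in a bounded region once $\alpha\cdot\omega$ and $\alpha^2$ are both controlled,'' and concretely that $-\delta^2 = t^2\omega^2 - \alpha^2$ is bounded ``since both $t$ and $\alpha^2$ are.'' But part (i) only bounds $\alpha^2$ from \emph{above}; it gives no lower bound, and indeed none exists: the region $\{\alpha : c \le \alpha\cdot\omega \le d,\ \alpha^2 \le K\}$ is unbounded in the negative-definite directions orthogonal to $\omega$. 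For instance, on a surface of Picard number $\ge 2$ take $B = \mathcal O_X(D)$ (which is $\mu_\omega$-stable) with $D\cdot\omega$ fixed in $[c,d]$ but $D^2 \to -\infty$; then $\alpha = D$, $-\delta^2 \to +\infty$, and $D\cdot\beta$ is genuinely unbounded in both directions. So no term-by-term bound on the three summands can work, and your Cauchy--Schwarz step as written fails.

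The rescue is that the unboundedness of the middle term, which grows like $\sqrt{-\delta^2}$, is absorbed by the quadratic growth of the Bogomolov term $-\ch_2(B)/\rk(B) \ge -\alpha^2/2 \sim (-\delta^2)/2$; but to see this you must combine the terms rather than bound them separately. This is exactly what the paper does by completing the square: Bogomolov gives
\begin{equation*}
\frac{c(B)}{\rk(B)} \;=\; -\frac{\ch_2(B)}{\rk(B)} + \frac{c_1(B)}{\rk(B)}\cdot\beta - \frac{\beta^2}{2}
\;\ge\; -\frac{1}{2}\left(\frac{c_1(B)}{\rk(B)} - \beta\right)^{2},
\end{equation*}
and then part (i) is applied to the \emph{shifted} class $\alpha = c_1(B)/\rk(B) - \beta$, whose $\omega$-degree lies in $[c - \beta\omega,\, d - \beta\omega]$, yielding the lower bound $-\tfrac12\max\{(c-\beta\omega)^2,(d-\beta\omega)^2\}/\omega^2$. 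Alternatively you could push your three-term estimate through by an AM--GM absorption of $\sqrt{(-\delta^2)(-\beta'^2)}$ into $(-\delta^2)/2$, but that is just the completed square in disguise; as currently written, your proof of (ii) does not go through.
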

\begin{proof}
(i) Write $\alpha = a \omega + \rho$ with $\rho \cdot \omega = 0$.
Then, $a = (\alpha \cdot \omega)/\omega^2$. By the Hodge Index
Theorem, we have $\rho^2 \le 0$. It follows that
\begin{eqnarray*}
\alpha^2 = a^2 \omega^2 + \rho^2
\le a^2 \omega^2 = (\alpha \cdot \omega)^2/\omega^2
\le \text{\rm max}\{c^2, d^2\}/\omega^2.
\end{eqnarray*}

(ii) Since $c \le \mu_\omega(B) \le d$, we obtain the inequalities
$$
c - \beta \cdot \omega \le
\left ( {c_1(B) \over \rk(B)} - \beta \right ) \cdot \omega
\le d - \beta \cdot \omega.
$$
By the Bogomolov Inequality, $\ch_2(B) \le c_1(B)^2/(2 \, \rk(B))$.
By (\ref{cE}),
\begin{eqnarray*}
  {c(B) \over \rk(B)}
= -{\ch_2(B) \over \rk(B)} + {c_1(B) \over \rk(B)}
     \cdot \beta - {\beta^2 \over 2}
\ge -{1 \over 2} \, \left ( {c_1(B) \over
     \rk(B)} - \beta \right )^2.
\end{eqnarray*}
Now our conclusion about $c(B)/\rk(B)$ follows immediately from (i).
\end{proof}

\begin{proposition}       \label{bnd-H01}
Let ${\bf u} = e^{-(\beta + i \, \omega)}$ with $\omega$ being
ample and $\beta, \omega \in \text{\rm Num}(X)_\R$.
Fix a numerical type $\mathfrak t = (r, c_1, c_2)$
and an interval $I = [a, +\infty)$ with $a > 0$. Then there
exists a positive number $N$ depending only on $\mathfrak t$, $\beta$,
$\omega$ and $I$ such that $\rk \big ( \mathcal H^0(E) \big )$,
$\rk \big ( \mathcal H^{-1}(E) \big ) \le N$ whenever $E \in
\overline{\mathfrak M}_{{\bf u}_m}(\mathfrak t)$ for some $m \in I$.
\end{proposition}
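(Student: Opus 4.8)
The plan is to argue by contradiction. Since $\rk\bigl(\mathcal H^0(E)\bigr)-\rk\bigl(\mathcal H^{-1}(E)\bigr)=\rk(E)=r$, it suffices to bound $\rk\bigl(\mathcal H^{-1}(E)\bigr)$. So suppose there were a sequence $E_n\in\overline{\mathfrak M}_{{\bf u}_{m_n}}(\mathfrak t)$ with $m_n\in I=[a,+\infty)$ and $\rk\bigl(\mathcal H^{-1}(E_n)\bigr)\to+\infty$; write $F_n=\mathcal H^{-1}(E_n)$ and $G_n=\mathcal H^0(E_n)$, so that $\rk(F_n),\rk(G_n)\to+\infty$. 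Put $\iota:=c_1\omega-r\,\beta\omega$; applying Lemma~\ref{En}(i) to $E_n$ gives $\iota\ge 0$, and in the degenerate case $\iota=0$ one checks directly from the definitions of $\mathcal T_{(\omega,\beta\omega)}$ and $\mathcal F_{(\omega,\beta\omega)}$ that $x_n:=c_1(G_n)\omega-\rk(G_n)\beta\omega$ and $y_n:=\rk(F_n)\beta\omega-c_1(F_n)\omega$ both vanish, forcing $G_n$ to be torsion and hence $\rk(G_n)=0$, a contradiction; so we may assume $\iota>0$, and then $\operatorname{Im}\bigl(Z_m(E_n)\bigr)=\iota m>0$, so every $E_n$ has phase strictly less than $1$.

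\emph{Step 1 (slopes).} The canonical sequence $0\to F_n[1]\to E_n\to G_n\to 0$ in $\mathcal A^\sharp_{(\omega,\beta\omega)}$ together with Lemma~\ref{En} shows $\mu_\omega(F_n)\to\beta\omega$ and $\mu_\omega(G_n)\to\beta\omega$; more precisely Lemma~\ref{En}(i) gives $x_n,y_n\ge 0$, a direct computation gives $x_n+y_n=\iota$, hence $0\le x_n,y_n\le\iota$. Since $F_n\in\mathcal F_{(\omega,\beta\omega)}$ all its $\mu_\omega$-Harder--Narasimhan slopes are $\le\beta\omega$, and from $\sum_j\rk(F_{n,j})\bigl(\beta\omega-\mu_\omega(F_{n,j})\bigr)=y_n\le\iota$ each such slope is $\ge\beta\omega-\iota$; likewise the $\mu_\omega$-HN slopes of the torsion-free quotient $\overline G_n:=G_n/\operatorname{Tor}(G_n)$ all lie in $(\beta\omega,\beta\omega+\iota]$, and $c_1\bigl(\operatorname{Tor}(G_n)\bigr)\omega\in[0,\iota]$.

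\emph{Step 2 (two-sided control of the linear coefficient $c(\cdot)$).} Applying Lemma~\ref{compare-bnd}(ii) to each $\mu_\omega$-HN factor of $F_n$ and of $\overline G_n$ and summing (note $c(\cdot)$ is additive in short exact sequences) produces a constant $C=C(\mathfrak t,\beta,\omega)$ with $c(F_n)\ge-C\,\rk(F_n)$ and $c(\overline G_n)\ge-C\,\rk(\overline G_n)$. In the other direction, $F_n[1]$ is a subobject of $E_n$ and, by Lemma~\ref{quotient}(i), $\overline G_n$ is a quotient object of $E_n$ in $\mathcal A^\sharp_{(\omega,\beta\omega)}$; feeding $Z_{m_n}$ of these into the numerical criterion \eqref{EB2}, and using $x_n,y_n\le\iota$, yields estimates of the shape
\[
c(F_n)\ \le\ -\,\rk(F_n)\,\frac{\omega^2 m_n^2}{2}+K(m_n),\qquad
c(\overline G_n)\ \le\ -\,\rk(\overline G_n)\,\frac{\omega^2 m_n^2}{2}+K(m_n),
\]
where $K(m)=|r|\tfrac{\omega^2 m^2}{2}+|c(E)|$ depends only on $\mathfrak t,\beta,\omega$ and is bounded on bounded sets of $m$.

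\emph{Step 3 (conclusion and the main obstacle).} Combining the two bounds on $c(\overline G_n)$ gives $\rk(\overline G_n)\bigl(\tfrac{\omega^2 m_n^2}{2}-C\bigr)\le K(m_n)$. Whenever $\tfrac{\omega^2 m_n^2}{2}>C+1$ this bounds $\rk(\overline G_n)$ (the quantity $K(m)/(\tfrac{\omega^2 m^2}{2}-C)$ stays bounded as $m\to+\infty$, tending to $|r|$), hence $\rk(F_n)$ is bounded along any such subsequence, contradicting $\rk(F_n)\to+\infty$. It remains to treat $m_n$ lying in the compact interval $J=\bigl[a,\sqrt{2(C+1)/\omega^2}\,\bigr]$: passing to a subsequence $m_n\to m_\ast\in J$, one has $Z_{m_n}(E_n)\to Z_{m_\ast}(E)$, a fixed complex number of argument in $(0,\pi)$, while $\rk(\overline G_n)\to+\infty$. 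Here the crude growth estimate above is useless, and one must instead exploit the finer structure of Step~1 --- the $\mu_\omega$-HN slopes of $\overline G_n$ confined to the fixed window $(\beta\omega,\beta\omega+\iota]$ and, because $\omega,\beta\in\operatorname{Num}(X)_\Q$, the Chern classes of the HN factors confined to a \emph{discrete} set of numerical types --- together with the inequality \eqref{EB2} applied to the minimal-slope HN factor of $\overline G_n$ (a quotient object of $E_n$ by Lemma~\ref{quotient}(i)) and to the initial HN segments of $F_n$ (subobjects of $E_n$ by Lemma~\ref{quotient}(ii)), to reach a contradiction. I expect this compact-interval case to be the main difficulty: for such $m$ the leading coefficient $\tfrac{\omega^2 m^2}{2}$ no longer dominates the universal constant $C$ coming from Lemma~\ref{compare-bnd}(ii), so ruling out unbounded rank requires extracting genuine information from the narrow slope windows and the integrality of Chern classes rather than from pure estimates on $m$.
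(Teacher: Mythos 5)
There is a genuine gap, and you have correctly located it yourself: your argument only disposes of the regime where $\tfrac{\omega^2 m_n^2}{2}$ dominates the fixed constant $C$, and the case $m_n$ lying in the compact interval $J=[a,\sqrt{2(C+1)/\omega^2}\,]$ is left as an acknowledged open problem, with only a speculative sketch. Moreover, the tools you propose for that case are not available: the proposition is stated for $\beta,\omega\in\text{\rm Num}(X)_\R$, so you cannot invoke rationality of $\beta,\omega$ to make the Chern data of the HN factors range over a discrete set. As written, the proof is therefore incomplete precisely at what you call the main difficulty.

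The missing idea is a refinement of your Step 2 that removes the case split altogether. Your lower bound $c(\overline G_n)\ge -C\,\rk(\overline G_n)$ uses Lemma~\ref{compare-bnd}~(ii) as a black box, with $C$ determined by the fixed slope window of width $\iota$; this is too crude. Instead, work with the single HN factor of smallest slope, $F_n^{(\ell_n)}$, of $\mathcal H^0(E_n)/\text{\rm Tor}$: by Lemma~\ref{En}~(ii) its slope is squeezed, $\beta\omega<\mu_\omega\big(F_n^{(\ell_n)}\big)\le\mu_\omega\big(\mathcal H^0(E_n)\big)\to\beta\omega$, so the quadratic bound inside the proof of Lemma~\ref{compare-bnd} (Bogomolov plus Hodge index, as in \eqref{bnd-H01.1}) gives $c\big(F_n^{(\ell_n)}\big)/\rk\big(F_n^{(\ell_n)}\big)\ge-\epsilon_n^2/(2\omega^2)$ with $\epsilon_n\to 0$, i.e.\ a lower bound tending to zero rather than a fixed $-C$. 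On the other hand, $F_n^{(\ell_n)}$ is a quotient of $E_n$ in $\mathcal A^\sharp_{(\omega,\beta\omega)}$ by Lemma~\ref{quotient}~(i), so $(Z_{m_n},\mathcal P_{m_n})$-semistability of $E_n$ and \eqref{EB2} give the upper bound
\begin{equation*}
\frac{c\big(F_n^{(\ell_n)}\big)}{\rk\big(F_n^{(\ell_n)}\big)}
\le \frac{\omega^2 m_n^2}{2}\cdot\frac{r\,\mu_\omega\big(F_n^{(\ell_n)}\big)-c_1\omega}{c_1\omega-r\,\beta\omega}
+ c(\mathfrak t,\beta)\cdot\frac{\mu_\omega\big(F_n^{(\ell_n)}\big)-\beta\omega}{c_1\omega-r\,\beta\omega},
\end{equation*}
whose right-hand side tends to $-\omega^2 m_0^2/2\le-\omega^2 a^2/2<0$ (or to $-\infty$ if $m_0=+\infty$) as $\mu_\omega\big(F_n^{(\ell_n)}\big)\to\beta\omega$. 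Comparing the two bounds yields a contradiction uniformly for all $m_n\ge a$, with no separate treatment of bounded $m_n$ and no rationality hypothesis; this is exactly how the paper closes the argument (your handling of the degenerate case $c_1\omega-r\,\beta\omega=0$ and your large-$m$ estimate are fine, but they are subsumed by this sharper comparison).
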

\begin{proof}
Since $\rk \big ( \mathcal H^{-1}(E)
\big ) = \rk \big ( \mathcal H^0(E) \big ) - r$, it suffices to prove
the lemma for $\rk \big ( \mathcal H^0(E) \big )$. Assume that such
an $N$ does not exist for $\rk \big ( \mathcal H^0(E) \big )$.
Then there exists a sequence of objects $E_n
\in \overline{\mathfrak M}_{{\bf u}_{m_n}}(\mathfrak t)$,
$n = 1, 2, \ldots$, with $m_n \in I$ for all $n$ and
$\lim_{n \to +\infty} \rk \big ( \mathcal H^0(E_n) \big ) = +\infty$.
Replacing by a subsequence if necessary, we may further assume
that $\lim_{n \to +\infty} m_n = m_0$ (possibly $+\infty$).

To draw a contradiction, let $A_{n, 0} = \mathcal H^0(E_n)$ and
$A_{n, 1} = \mathcal H^{-1}(E_n)$. Then we have the exact sequence
$0 \to A_{n, 1}[-1] \to E_n \to A_{n, 0} \to 0$ in
$\mathcal A^\sharp_{(\omega, \beta\omega)}$. By Lemma~\ref{En}~(ii),
$\lim_{n \to +\infty} \mu_\omega(A_{n, 0}) = \beta \omega$.
Let $F_n = A_{n, 0}/\text{\rm Tor}(A_{n, 0})$, and let
$F_n^{(1)}, \, \ldots, \, F_n^{(\ell_n)}$ be the usual
HN-filtration quotients of $F_n$ with respect to $\mu_\omega$ such
that $\mu_\omega \big ( F_n^{(1)} \big ) > \ldots >
\mu_\omega \big ( F_n^{(\ell_n)} \big )$.
Then $F_n^{(1)}, \ldots, F_n^{(\ell_n)}$ are torsion free
and $\mu_\omega$-semistable. Moreover, $\beta\omega < \mu_\omega
\big ( F_n^{(\ell_n)} \big ) \le \mu_\omega(F_n) \le
\mu_\omega(A_{n, 0})$. Thus $\lim_{n \to +\infty} \mu_\omega \big (
F_n^{(\ell_n)} \big ) = \beta \omega$. In particular, for $n \gg 0$,
we have $\beta\omega < \mu_\omega \big ( F_n^{(\ell_n)} \big ) \le
\beta\omega + \epsilon_n$ where $\{\epsilon_n\}_{n \gg 0}$ is
a sequence of positive numbers with $\lim_{n \to +\infty} \epsilon_n
= 0$. By the proof of Lemma~\ref{compare-bnd},
\begin{eqnarray}   \label{bnd-H01.1}
  {c\big ( F_n^{(\ell_n)} \big ) \over
       \rk\big ( F_n^{(\ell_n)} \big )}
\ge -{1 \over 2} \, \left ( {c_1\big ( F_n^{(\ell_n)} \big ) \over
     \rk\big ( F_n^{(\ell_n)} \big )} - \beta \right )^2
\ge -{\epsilon_n^2 \over 2 \omega^2}.
\end{eqnarray}

On the other hand, by Lemma~\ref{quotient}~(i), there exists
a quotient $E_n \to F_n^{(\ell_n)} \to 0$ in
$\mathcal A^\sharp_{(\omega, \beta\omega)}$.
Since $E_n \in \overline{\mathfrak M}_{{\bf u}_{m_n}}(\mathfrak t)$,
we see from (\ref{EB2}) that
\begin{eqnarray*}
& &{\omega^2 m_n^2 \over 2} \big ( r \, c_1(F_n^{(\ell_n)})
   \omega - \rk(F_n^{(\ell_n)}) \, c_1 \omega \big )       \\
&\ge&c(F_n^{(\ell_n)}) \, \big ( c_1 \omega - r \, \beta \omega \big )
   - c(\mathfrak t, \beta) \, \big ( c_1(F_n^{(\ell_n)})
   \omega - \rk(F_n^{(\ell_n)}) \, \beta \omega \big )
\end{eqnarray*}
where $c(\mathfrak t, \beta) = c(E_n)$ depends only on
$\mathfrak t$ and $\beta$. By Lemma~\ref{En}~(i), $c_1 \omega -
r \, \beta \omega \ge 0$. If $c_1 \omega - r \, \beta \omega = 0$,
then the phase of $E_n$ with respect to every charge
$Z_{{\bf u}_m}, m > 0$ is equal to $1$.
So $E_n \in \overline{\mathfrak M}_{{\bf u}_m}(\mathfrak t)$
for all $n, m > 0$. In view of Lemma~\ref{4.1} and Lemma~\ref{4.2},
this contradicts to
$\lim_{n \to +\infty} \rk \big ( \mathcal H^0(E_n) \big ) = +\infty$.
Thus $c_1 \omega - r \, \beta \omega > 0$ and
\begin{eqnarray*}
   {c(F_n^{(\ell_n)}) \over \rk(F_n^{(\ell_n)})}
\le {\omega^2 m_n^2 \over 2} \cdot {r \, \mu_\omega(F_n^{(\ell_n)})
   - c_1 \omega \over c_1 \omega - r \, \beta \omega}
   + c(\mathfrak t, \beta) \cdot  {\mu_\omega(F_n^{(\ell_n)})
   - \beta\omega \over c_1 \omega - r \, \beta \omega}.
\end{eqnarray*}
Combining this with (\ref{bnd-H01.1}), we conclude that
\begin{eqnarray*}
{\omega^2 m_n^2 \over 2} \cdot {r \, \mu_\omega(F_n^{(\ell_n)})
   - c_1 \omega \over c_1 \omega - r \, \beta \omega}
   + c(\mathfrak t, \beta) \cdot  {\mu_\omega(F_n^{(\ell_n)})
   - \beta\omega \over c_1 \omega - r \, \beta \omega}
\ge -{\epsilon_n^2 \over 2 \omega^2}.
\end{eqnarray*}
Recall that $\lim_{n \to +\infty} \mu_\omega \big ( F_n^{(\ell_n)}
\big ) = \beta \omega$ and $\lim_{n \to +\infty} m_n = m_0 \ge a > 0$.
Letting $n \to +\infty$, we see that $-\omega^2 m_0^2/2 \ge 0$
which is impossible.
\end{proof}

\begin{lemma}       \label{bnd-rk}
Let ${\bf u} = e^{-(\beta + i \, \omega)}$ with $\omega$ being
ample and $\beta, \omega \in \text{\rm Num}(X)_\R$. Fix a numerical
type $\mathfrak t = (r, c_1, c_2)$ and an interval $I = [a, +\infty)$
with $a > 0$. Then there exists $N$ depending
only on $\mathfrak t$, $\beta$, $\omega$ and $I$ such that $|\rk(A)|
\le N$ whenever
\begin{enumerate}
\item[$\bullet$] $A$ is $(Z_{m_2}, \mathcal P_{m_2})$-semistable for
      some $m_2 \in I$;
\item[$\bullet$] $A$ is a proper sub-object of certain object
      $E \in \mathcal A^\sharp_{(\omega, \beta\omega)}$
      with $\mathfrak t(E) = \mathfrak t$;
\item[$\bullet$] $A$ destablizes $E$ with respect to
      $(Z_{m_2}, \mathcal P_{m_2})$;
\item[$\bullet$] $\phi \big (Z_{m_{0}}(A) \big ) =
      \phi \big (Z_{m_{0}}(E) \big )$ for some $m_0 \in I$.
\end{enumerate}
\end{lemma}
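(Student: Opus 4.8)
The plan is to argue by contradiction, along the lines of the proof of Proposition~\ref{bnd-H01} but now applied to the subobject $A$ rather than to an object of the fixed type $\mathfrak t$. Suppose no such $N$ exists. Then there are sequences $A_n$ and $E_n \in \As$ with $\mathfrak t(E_n) = \mathfrak t$, and numbers $m_{2,n}, m_{0,n} \in I$, such that $A_n$ is a proper subobject of $E_n$ in $\As$, $A_n$ is $(Z_{m_{2,n}}, \mathcal P_{m_{2,n}})$-semistable and destabilizes $E_n$, $\phi\big(Z_{m_{0,n}}(A_n)\big) = \phi\big(Z_{m_{0,n}}(E_n)\big)$, and $|\rk(A_n)| \to +\infty$. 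Passing to a subsequence, we may assume $\rk(A_n) \to +\infty$ or $\rk(A_n) \to -\infty$, and $m_{0,n} \to m_0^\ast \in [a,+\infty]$, $m_{2,n} \to m_2^\ast \in [a,+\infty]$. Put $\delta := c_1\omega - r\,\beta\omega$; by Lemma~\ref{En}(i), $\delta \ge 0$. If $\delta = 0$, then $Z_m(E_n) = \frac{r\omega^2}{2}m^2 + c(\mathfrak t,\beta)$ is real for every $m>0$, hence (being a nonzero value of a stability function on an object of $\As$) a negative real, so $\phi\big(Z_m(E_n)\big) = 1$ and $E_n$ cannot be destabilized — exactly the contradiction used for the case $c_1\omega - r\beta\omega = 0$ in the proof of Proposition~\ref{bnd-H01}. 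So from now on $\delta > 0$.

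Next I would control the slopes. Writing $B_n := E_n/A_n$, Lemma~\ref{En}(i) gives $\gamma_n := c_1(A_n)\omega - \rk(A_n)\beta\omega \ge 0$ and $c_1(B_n)\omega - \rk(B_n)\beta\omega \ge 0$; since these add up to $\delta$ we get $\gamma_n \in [0,\delta]$, so $\mu_\omega(A_n) = \beta\omega + \gamma_n/\rk(A_n) \to \beta\omega$. Because $\mathcal H^0(A_n) \in \mathcal T_{(\omega,\beta\omega)}$ and $\mathcal H^{-1}(A_n) \in \mathcal F_{(\omega,\beta\omega)}$, the same splitting of $\gamma_n$ shows $0 \le c_1(\mathcal H^0(A_n))\omega - \rk(\mathcal H^0(A_n))\beta\omega \le \delta$; combined with $\rk(\mathcal H^0(A_n)) \ge \rk(A_n)$ (resp. $\rk(\mathcal H^{-1}(A_n)) \ge -\rk(A_n)$) this yields $\mu_\omega(\mathcal H^0(A_n)) \to \beta\omega$ when $\rk(A_n) \to +\infty$ (resp. $\mu_\omega(\mathcal H^{-1}(A_n)) \to \beta\omega$ when $\rk(A_n) \to -\infty$).

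I would then pin down $c(A_n)$. By (\ref{ZEm}) and Remark~\ref{compare-phase}, $\mathrm{Im}\big(\overline{Z_m(A_n)}\,Z_m(E_n)\big) = m\big(\frac{\omega^2 m^2}{2}P_n + Q_n\big)$, where $P_n = \rk(A_n)\,c_1\omega - r\,c_1(A_n)\omega = \rk(A_n)\delta - r\gamma_n$ and $Q_n = c(A_n)\delta - c(\mathfrak t,\beta)\gamma_n$. The hypotheses "$A_n$ destabilizes $E_n$ at $m_{2,n}$" and "$\phi\big(Z_{m_{0,n}}(A_n)\big) = \phi\big(Z_{m_{0,n}}(E_n)\big)$" become $\frac{\omega^2 m_{2,n}^2}{2}P_n + Q_n < 0$ and $\frac{\omega^2 m_{0,n}^2}{2}P_n + Q_n = 0$. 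Since $\gamma_n$ is bounded and $\rk(A_n) \to \pm\infty$ we have $P_n \ne 0$ for $n \gg 0$, so $m_{0,n}^2 = -2Q_n/(\omega^2 P_n)$; inserting $P_n = \rk(A_n)\delta + O(1)$, $Q_n = c(A_n)\delta + O(1)$ and using $m_{0,n} \ge a$ gives $c(A_n) \le -\frac{a^2\omega^2}{2}\rk(A_n) + O(1)$ when $\rk(A_n) \to +\infty$, and $c(A_n) \ge -\frac{a^2\omega^2}{2}\rk(A_n) + O(1)$ when $\rk(A_n) \to -\infty$, all $O(1)$'s depending only on $\mathfrak t, \beta, \omega, a$. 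In particular $|c(A_n)| \to +\infty$, with sign opposite to that of $\rk(A_n)$.

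The final and hardest step is to contradict this using semistability of $A_n$, extending the limiting argument of Proposition~\ref{bnd-H01}. Assume $\rk(A_n) \to +\infty$, so $\rk(\mathcal H^0(A_n)) \to +\infty$; let $S_n$ be the bottom $\mu_\omega$-HN quotient of $\mathcal H^0(A_n)/\mathrm{Tor}$, which is torsion free and $\mu_\omega$-semistable with $\beta\omega < \mu_\omega(S_n) \le \mu_\omega(\mathcal H^0(A_n)) \to \beta\omega$, and which is a quotient of $A_n$ in $\As$ by Lemma~\ref{quotient}(i). Semistability of $A_n$ gives $\phi\big(Z_{m_{2,n}}(A_n)\big) \le \phi\big(Z_{m_{2,n}}(S_n)\big)$; by Remark~\ref{compare-phase}, the Bogomolov estimate $c(S_n)/\rk(S_n) \ge -\frac12\big(\mu_\omega(S_n)-\beta\omega\big)^2/\omega^2$ from the proof of Lemma~\ref{compare-bnd}, and the substitution of the bound $c(A_n) \le -\frac{a^2\omega^2}{2}\rk(A_n) + O(1)$, this inequality (after dividing by $\rk(S_n)$ and letting $n\to\infty$, so that $\mu_\omega(S_n)\to\beta\omega$) forces a strictly negative leading $m^2$-term to dominate a right-hand side tending to a nonnegative limit — impossible. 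The case $\rk(A_n) \to -\infty$ is handled dually, using the top $\mu_\omega$-HN piece $S_n$ of $\mathcal H^{-1}(A_n)$ (with $\mu_\omega(S_n) \to \beta\omega$), which gives a subobject $S_n[1] \hookrightarrow A_n$ in $\As$ by Lemma~\ref{quotient}(ii), the reversed phase inequality from semistability, and the bound $c(A_n) \ge -\frac{a^2\omega^2}{2}\rk(A_n) + O(1)$. The main obstacle here, compared with Proposition~\ref{bnd-H01}, is that the invariants $\rk(A_n)$, $c_1(A_n)\omega$, $c(A_n)$ of the subobject are not fixed; the point is that the mini-wall equation $\frac{\omega^2 m_{0,n}^2}{2}P_n + Q_n = 0$ together with $m_{0,n} \ge a$ rigidly ties $c(A_n)$ to $\rk(A_n)$ up to a bounded error, which is precisely what makes the limiting estimate go through (the degenerate subcase $\gamma_n \to 0$, where $A_n$ approaches a phase-$1$ object, is harmless since then the relevant cohomology sheaf controls everything).
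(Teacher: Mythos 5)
Your proposal is correct in outline and follows the same skeleton as the paper's proof (contradiction with $|\rk(A_n)|\to\infty$, convergence of $\mu_\omega(A_n)$ to $\beta\omega$ via Lemma~\ref{En}, the extremal $\mu_\omega$-HN piece of $\mathcal H^0(A_n)$ resp.\ $\mathcal H^{-1}(A_n)$ fed into Lemma~\ref{quotient}, and the Bogomolov bound of Lemma~\ref{compare-bnd}), but the way you exploit the wall condition is genuinely different and simpler. The paper keeps the exact identity (\ref{cAn-rkAn}) for $c(A_n)/\rk(A_n)$, with the coefficient $m_{0,n}^2$ still present, substitutes it into the semistability inequality at $m_{2,n}$, and then must split into the subcases $m_{0,n}<m_{2,n}$ and $m_{0,n}>m_{2,n}$, using the sign of $r\mu_\omega(A_n)-c_1\omega$ (extracted from (\ref{bnd-rk.1})--(\ref{bnd-rk.112})) and an algebraic cancellation of the ratio $(\mu_\omega(F_n^{(\ell_n)})-\beta\omega)/(\mu_\omega(A_n)-\beta\omega)$ to reach $0\le-\omega^2m^2/2$. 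You instead convert the wall equation $\tfrac{\omega^2 m_{0,n}^2}{2}P_n+Q_n=0$ together with the single piece of information $m_{0,n}\ge a$ into the one-sided linear bound $c(A_n)\le -\tfrac{a^2\omega^2}{2}\rk(A_n)+O(1)$ (reversed for $\rk(A_n)\to-\infty$), which makes the term $-\tfrac{c(A_n)}{\rk(A_n)}(\mu_\omega(S_n)-\beta\omega)$ in the scaled semistability inequality bounded below by $(\tfrac{a^2\omega^2}{2}-o(1))(\mu_\omega(S_n)-\beta\omega)>0$, while the Bogomolov term contributes only $(\mu_\omega(S_n)-\beta\omega)\cdot o(1)$; this yields the contradiction without ever comparing $m_{0,n}$ with $m_{2,n}$, so the paper's two-subcase analysis disappears. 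That is a real economy, and I checked that the estimates close: the dangerous ratio that forces the paper's exact identity never appears in your version.

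One point you leave implicit should be made explicit, because your closing phrase about the $m^2$-term ``dominating'' is not literally how the contradiction arises: since $m_{2,n}$ may be unbounded, the term $\tfrac{\omega^2 m_{2,n}^2}{2}\big(\mu_\omega(S_n)-\mu_\omega(A_n)\big)$ can only be discarded after verifying its sign, namely $\mu_\omega(S_n)\le\mu_\omega(A_n)$ in the case $\rk(A_n)\to+\infty$ and $\mu_\omega(S_n)\ge\mu_\omega(A_n)$ in the case $\rk(A_n)\to-\infty$. These follow from the short computations the paper records (using $\mathcal H^{-1}(A_n)\in\mathcal F_{(\omega,\beta\omega)}$, $\rk(\mathcal H^0(A_n))\ge\rk(A_n)$ and $\mu_\omega(A_n)\ge\beta\omega$, and dually), and your slope estimates for $\mathcal H^0(A_n)$, $\mathcal H^{-1}(A_n)$ already contain the needed ingredients; with that sign fact in place the left-hand side is $\le 0$ while the right-hand side is strictly positive for $n\gg0$, which is the actual contradiction (and it also handles the degenerate possibility $\mu_\omega(S_n)=\beta\omega$ in the negative-rank case, provided the $m_{2,n}^2$-term is kept rather than dropped there). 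With these small verifications spelled out, your argument is complete and gives the universal bound on $|\rk(A)|$ as claimed.
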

\begin{proof}
Assume that our statement is not true.
Then there exists a sequence of sub-objects $A_n \subset E_n
(n = 1, 2, \ldots)$ in $\mathcal A^\sharp_{(\omega, \beta\omega)}$
such that $\lim_{n \to +\infty} \rk(A_n) = \pm \infty$,
$\mathfrak t(E_n) = \mathfrak t$, $A_n$ is $(Z_{m_{2, n}},
\mathcal P_{m_{2, n}})$-semistable for some $m_{2, n} \in I$,
$A_n$ destablizes $E_n$ with respect to $(Z_{m_{2, n}},
\mathcal P_{m_{2, n}})$, and $\phi \big (Z_{m_{0, n}}(A_n) \big )
= \phi \big (Z_{m_{0, n}}(E_n) \big )$ for some $m_{0, n} \in I$.
We may assume that $\lim_{n \to +\infty} m_{i, n} = m_i$
(possibly $+\infty$) for $i = 0, 2$. Define
\begin{eqnarray}   \label{dtboA}
d(\mathfrak t, \beta, \omega, A_n)
= c(A_n) \, \big ( c_1 \omega - r \, \beta \omega \big )
- c(\mathfrak t, \beta) \, \big ( c_1(A_n) \omega -
      \rk(A_n) \, \beta \omega \big )
\end{eqnarray}
where $c(\mathfrak t, \beta) = c(E_n)$ depends only on $\mathfrak t$
and $\beta$. Since $\phi \big (Z_{m_{0, n}}(A_n) \big )
= \phi \big (Z_{m_{0, n}}(E_n) \big )$
and $\phi \big (Z_{m_{2, n}}(A_n) \big ) >
\phi \big (Z_{m_{2, n}}(E_n) \big )$,
we see from (\ref{EB2}) that
\begin{eqnarray}
  {\omega^2 m_{0, n}^2 \over 2} \big ( r \, c_1(A_n) \omega -
    \rk(A_n) \, c_1 \omega \big )
&=&d(\mathfrak t, \beta, \omega, A_n),    \label{bnd-rk.1}  \\
  {\omega^2 m_{2, n}^2 \over 2} \big ( r \, c_1(A_n) \omega -
    \rk(A_n) \, c_1 \omega \big )
&>&d(\mathfrak t, \beta, \omega, A_n).      \label{bnd-rk.112}
\end{eqnarray}
Since $E_n$ is not $(Z_{m_{2, n}}, \mathcal P_{m_{2, n}})$-semistable,
we must have $\phi \big (Z_{m_{2, n}}(E_n) \big ) < 1$.
So $c_1 \omega - r \, \beta \omega > 0$. This in turn implies that
$\phi \big (Z_{m_{0, n}}(A_n) \big )
= \phi \big (Z_{m_{0, n}}(E_n) \big ) < 1$. Therefore,
$c_1(A_n) \omega - \rk(A_n) \beta \omega > 0$. In summary, we obtain
\begin{eqnarray}   \label{bnd-rk.2}
c_1 \omega - r \, \beta \omega > 0, \,\,
c_1(A_n) \omega - \rk(A_n) \beta \omega  > 0.
\end{eqnarray}
By Lemma~\ref{En}~(ii),
$\lim_{n \to +\infty} \mu_\omega(A_n) = \beta\omega$. Dividing both
sides of (\ref{bnd-rk.1}) by $\rk(A_n) \, (c_1 \omega -
r \, \beta \omega)$ and using (\ref{dtboA}), we conclude that
\begin{eqnarray}       \label{cAn-rkAn}
{c(A_n) \over \rk(A_n)}
= {\omega^2 m_{0, n}^2 \over 2} \cdot {r \, \mu_\omega(A_n) -
  c_1 \omega \over c_1 \omega - r \, \beta\omega}
  + c(\mathfrak t, \beta) \cdot {\mu_\omega(A_n) - \beta\omega
  \over c_1 \omega - r \, \beta\omega}.
\end{eqnarray}
Let $A_{n, 0} = \mathcal H^0(A_n)$, $A_{n, 1} =
\mathcal H^{-1}(A_n)$ and $F_n = A_{n, 0}/\text{\rm Tor}(A_{n, 0})$.

\medskip
{\bf Case 1: $\lim_{n \to +\infty} \rk(A_n) = +\infty$}. Then
$\lim_{n \to +\infty} \rk \big ( A_{n, 0} \big ) = +\infty$.
Since $A_{n, 1} \in \mathcal F_{(\omega, \beta\omega)}$, we get
$c_1(A_{n, 1})\omega \le \rk(A_{n, 1}) \, \beta \omega$.
Since $F_n \in \mathcal T_{(\omega, \beta\omega)}$,
\begin{eqnarray*}
    \beta \omega
&<&\mu_\omega(F_n) \le \mu_\omega(A_{n, 0})
  = {c_1(A_n) \omega + c_1(A_{n, 1})\omega \over \rk(A_{n, 0})} \\
&\le&{c_1(A_n) \omega + \rk(A_{n, 1}) \, \beta \omega \over \rk(A_{n, 0})}
  ={c_1(A_n) \omega + [\rk(A_{n, 0}) - \rk(A_n)] \, \beta \omega
  \over \rk(A_{n, 0})}   \\
&=&\beta \omega + {\rk(A_n) \over \rk(A_{n, 0})} \cdot
  (\mu_\omega(A_n) - \beta \omega)
  \le \beta \omega + (\mu_\omega(A_n) - \beta \omega)
  = \mu_\omega(A_n).
\end{eqnarray*}
So $\lim_{n \to +\infty} \mu_\omega(F_n) = \beta\omega$.
Let $F_n^{(1)}, \, \ldots, \, F_n^{(\ell_n)}$ be the usual
HN-filtration quotients of $F_n$ with respect to $\mu_\omega$ such
that $\mu_\omega \big ( F_n^{(1)} \big ) > \ldots >
\mu_\omega \big ( F_n^{(\ell_n)} \big )$.
Then $F_n^{(1)}, \ldots, F_n^{(\ell_n)}$ are torsion free
and $\mu_\omega$-semistable. Moreover, $\beta\omega < \mu_\omega
\big ( F_n^{(\ell_n)} \big ) \le \mu_\omega(F_n) \le \mu_\omega(A_n)$.
Thus $\lim_{n \to +\infty} \mu_\omega \big (
F_n^{(\ell_n)} \big ) = \beta \omega$, and
$\beta\omega < \mu_\omega \big ( F_n^{(\ell_n)} \big ) \le
\beta\omega + \epsilon_n$ for $n \gg 0$ where $\{\epsilon_n\}_{n \gg 0}$
is a sequence of positive numbers with $\lim_{n \to +\infty}
\epsilon_n = 0$. As in (\ref{bnd-H01.1}),
\begin{eqnarray}   \label{bnd-rk.4}
  {c\big ( F_n^{(\ell_n)} \big ) \over
       \rk\big ( F_n^{(\ell_n)} \big )}
\ge -{\epsilon_n^2 \over 2 \omega^2}.
\end{eqnarray}
On the other hand, by Lemma~\ref{quotient}~(i), there exists a quotient
$A_n \to F_n^{(\ell_n)} \to 0$ in $\mathcal A^\sharp_{(\omega, \beta\omega)}$.
Since $A_n$ is $(Z_{m_{2, n}}, \mathcal P_{m_{2, n}})$-semistable, we have
\begin{eqnarray*}
& &{\omega^2 m_{2, n}^2 \over 2} \big ( \rk(A_n) \, c_1(F_n^{(\ell_n)})
   \omega - \rk(F_n^{(\ell_n)}) \, c_1(A_n) \omega \big )       \\
&\ge&c(F_n^{(\ell_n)}) \, \big ( c_1(A_n) \omega -
   \rk(A_n) \, \beta \omega \big ) - c(A_n) \, \big ( c_1(F_n^{(\ell_n)})
   \omega - \rk(F_n^{(\ell_n)}) \, \beta \omega \big ).
\end{eqnarray*}
by (\ref{EB2}). Since $c_1(A_n) \omega - \rk(A_n) \, \beta \omega > 0$,
we obtain
\begin{eqnarray*}
   {c(F_n^{(\ell_n)}) \over \rk(F_n^{(\ell_n)})}
&\le&{c(A_n) \over \rk(A_n)} \cdot  {\mu_\omega(F_n^{(\ell_n)})
   - \beta\omega \over \mu_\omega(A_n) - \beta\omega} +
   {\omega^2 m_{2, n}^2 \over 2} \cdot {\mu_\omega(F_n^{(\ell_n)})
   - \mu_\omega(A_n) \over \mu_\omega(A_n) - \beta\omega}       \\
&=&{\omega^2 m_{0, n}^2 \over 2} \cdot {r \mu_\omega(A_n) - c_1 \omega
   \over c_1 \omega - r \, \beta\omega} \cdot {\mu_\omega(F_n^{(\ell_n)})
   - \beta\omega \over \mu_\omega(A_n) - \beta\omega}
   \\
& & + \, c(\mathfrak t, \beta) \cdot {\mu_\omega(F_n^{(\ell_n)}) -
   \beta\omega \over c_1 \omega - r \, \beta\omega}
   + {\omega^2 m_{2, n}^2 \over 2} \cdot {\mu_\omega(F_n^{(\ell_n)})
   - \mu_\omega(A_n) \over \mu_\omega(A_n) - \beta\omega}
\end{eqnarray*}
where we have used (\ref{cAn-rkAn}) in the second step. Combining with
(\ref{bnd-rk.4}), we get
\begin{eqnarray}   \label{combining}
   -{\epsilon_n^2 \over 2 \omega^2}
&\le&{\omega^2 m_{0, n}^2 \over 2} \cdot {r \mu_\omega(A_n) - c_1 \omega
   \over c_1 \omega - r \, \beta\omega} \cdot {\mu_\omega(F_n^{(\ell_n)})
   - \beta\omega \over \mu_\omega(A_n) - \beta\omega}   \nonumber  \\
& & + \, {\omega^2 m_{2, n}^2 \over 2} \cdot {\mu_\omega(F_n^{(\ell_n)})
   - \mu_\omega(A_n) \over \mu_\omega(A_n) - \beta\omega}
   + c(\mathfrak t, \beta) \cdot {\mu_\omega(F_n^{(\ell_n)}) -
   \beta\omega \over c_1 \omega - r \, \beta\omega}. \quad
\end{eqnarray}

Note that we may assume either $m_{0, n} < m_{2, n}$ for all $n$ or
$m_{0, n} > m_{2, n}$ for all $n$. If $m_{0, n} < m_{2, n}$ for all $n$,
then since $\mu_\omega(F_n^{(\ell_n)}) - \mu_\omega(A_n) \le 0$,
we see from (\ref{combining}) that
\begin{eqnarray*}
   -{\epsilon_n^2 \over 2 \omega^2}
&\le&{\omega^2 m_{0, n}^2 \over 2} \cdot {r \mu_\omega(A_n) - c_1 \omega
   \over c_1 \omega - r \, \beta\omega} \cdot {\mu_\omega(F_n^{(\ell_n)})
   - \beta\omega \over \mu_\omega(A_n) - \beta\omega}   \nonumber  \\
& & + \, {\omega^2 m_{0, n}^2 \over 2} \cdot {\mu_\omega(F_n^{(\ell_n)})
   - \mu_\omega(A_n) \over \mu_\omega(A_n) - \beta\omega}
   + c(\mathfrak t, \beta) \cdot {\mu_\omega(F_n^{(\ell_n)}) -
   \beta\omega \over c_1 \omega - r \, \beta\omega}   \\
&=&{\omega^2 m_{0, n}^2 \over 2} \cdot {r \mu_\omega(F_n^{(\ell_n)})
   - c_1 \omega \over c_1 \omega - r \, \beta\omega}
   + c(\mathfrak t, \beta) \cdot {\mu_\omega(F_n^{(\ell_n)}) -
   \beta\omega \over c_1 \omega - r \, \beta\omega}.
\end{eqnarray*}
Letting $n \to +\infty$, we obtain $0 \le -\omega^2 m_0^2/2$ which is
impossible since $m_0 \ge a > 0$. Similarly, if $m_{0, n} > m_{2, n}$ for
all $n$, then $r \, \mu_\omega(A_n) - c_1 \omega < 0$ by (\ref{bnd-rk.1})
and (\ref{bnd-rk.112}). Therefore, we conclude from (\ref{combining})
again that
\begin{eqnarray*}
   -{\epsilon_n^2 \over 2 \omega^2}
&<&{\omega^2 m_{2, n}^2 \over 2} \cdot {r \mu_\omega(A_n) - c_1 \omega
   \over c_1 \omega - r \, \beta\omega} \cdot {\mu_\omega(F_n^{(\ell_n)})
   - \beta\omega \over \mu_\omega(A_n) - \beta\omega}   \nonumber  \\
& & + \, {\omega^2 m_{2, n}^2 \over 2} \cdot {\mu_\omega(F_n^{(\ell_n)})
   - \mu_\omega(A_n) \over \mu_\omega(A_n) - \beta\omega}
   + c(\mathfrak t, \beta) \cdot {\mu_\omega(F_n^{(\ell_n)}) -
   \beta\omega \over c_1 \omega - r \, \beta\omega}   \\
&=&{\omega^2 m_{2, n}^2 \over 2} \cdot {r \mu_\omega(F_n^{(\ell_n)})
   - c_1 \omega \over c_1 \omega - r \, \beta\omega}
   + c(\mathfrak t, \beta) \cdot {\mu_\omega(F_n^{(\ell_n)}) -
   \beta\omega \over c_1 \omega - r \, \beta\omega}.
\end{eqnarray*}
Letting $n \to +\infty$, we obtain $0 \le -\omega^2 m_2^2/2$ which is
impossible since $m_2 \ge a > 0$.

\medskip
{\bf Case 2: $\lim_{n \to +\infty} \rk(A_n) = -\infty$}. Then
$\lim_{n \to +\infty} \rk \big ( A_{n, 1} \big ) = +\infty$ and
\begin{eqnarray*}
    \beta \omega
&\ge&\mu_\omega(A_{n, 1})
  = {c_1(A_{n, 0})\omega - c_1(A_n) \omega \over \rk(A_{n, 1})}
  \ge {\rk(A_{n, 0}) \, \beta \omega - c_1(A_n) \omega \over \rk(A_{n, 1})} \\
&=&{[\rk(A_{n, 1}) + \rk(A_n)] \, \beta \omega
  -c_1(A_n) \omega \over \rk(A_{n, 1})}
  = \beta \omega - {\rk(A_n) \over \rk(A_{n, 1})} \cdot
  (\mu_\omega(A_n) - \beta \omega)    \\
&\ge&\beta \omega + (\mu_\omega(A_n) - \beta \omega) = \mu_\omega(A_n).
\end{eqnarray*}
So $\lim_{n \to +\infty} \mu_\omega(A_{n, 1}) = \beta\omega$.
Let $G_n^{(1)}, \, \ldots, \, G_n^{(k_n)}$ be the usual
HN-filtration quotients of $A_{n, 1}$ with respect to $\mu_\omega$ such
that $\mu_\omega \big ( G_n^{(1)} \big ) > \ldots >
\mu_\omega \big ( G_n^{(k_n)} \big )$.
Then $G_n^{(1)}$ is torsion free
and $\mu_\omega$-semistable. Moreover, $\beta\omega \ge \mu_\omega
\big ( G_n^{(1)} \big ) \ge \mu_\omega(A_{n, 1}) \ge \mu_\omega(A_n)$.
Thus $\lim_{n \to +\infty} \mu_\omega \big ( G_n^{(1)} \big ) =
\beta \omega$, and $\beta\omega \ge \mu_\omega \big ( G_n^{(1)} \big ) \ge
\beta\omega - \epsilon_n$ for $n \gg 0$ where $\{\epsilon_n\}_{n \gg 0}$
is a sequence of positive numbers with $\lim_{n \to +\infty}
\epsilon_n = 0$. As in (\ref{bnd-H01.1}),
\begin{eqnarray}   \label{bnd-rk.400}
  {c\big ( G_n^{(1)} \big ) \over \rk\big ( G_n^{(1)} \big )}
\ge -{\epsilon_n^2 \over 2 \omega^2}.
\end{eqnarray}
On the other hand, by Lemma~\ref{quotient}~(ii), there exists an injection
$0 \to G_n^{(1)}[1] \to A_n$ in $\mathcal A^\sharp_{(\omega, \beta\omega)}$.
Since $A_n$ is $(Z_{m_{2, n}}, \mathcal P_{m_{2, n}})$-semistable, we have
\begin{eqnarray*}
& &-{\omega^2 m_{2, n}^2 \over 2} \big ( \rk(A_n) \, c_1(G_n^{(1)})
   \omega - \rk(G_n^{(1)}) \, c_1(A_n) \omega \big )       \\
&\le&-c(G_n^{(1)}) \, \big ( c_1(A_n) \omega -
   \rk(A_n) \, \beta \omega \big ) + c(A_n) \, \big ( c_1(G_n^{(1)})
   \omega - \rk(G_n^{(1)}) \, \beta \omega \big ).
\end{eqnarray*}
by (\ref{EB2}). Since $c_1(A_n) \omega - \rk(A_n) \, \beta \omega > 0$,
we obtain
\begin{eqnarray*}
   {c(G_n^{(1)}) \over \rk(G_n^{(1)})}
&\le&{c(A_n) \over \rk(A_n)} \cdot  {\mu_\omega(G_n^{(1)})
   - \beta\omega \over \mu_\omega(A_n) - \beta\omega} +
   {\omega^2 m_{2, n}^2 \over 2} \cdot {\mu_\omega(G_n^{(1)})
   - \mu_\omega(A_n) \over \mu_\omega(A_n) - \beta\omega}       \\
&=&{\omega^2 m_{0, n}^2 \over 2} \cdot {r \mu_\omega(A_n) - c_1 \omega
   \over c_1 \omega - r \, \beta\omega} \cdot {\mu_\omega(G_n^{(1)})
   - \beta\omega \over \mu_\omega(A_n) - \beta\omega}     \\
& & + \, c(\mathfrak t, \beta) \cdot {\mu_\omega(G_n^{(1)}) -
   \beta\omega \over c_1 \omega - r \, \beta\omega}
   + {\omega^2 m_{2, n}^2 \over 2} \cdot {\mu_\omega(G_n^{(1)})
   - \mu_\omega(A_n) \over \mu_\omega(A_n) - \beta\omega}
\end{eqnarray*}
where we have used (\ref{cAn-rkAn}) in the second step. Combining with
(\ref{bnd-rk.400}), we get
\begin{eqnarray*}
   -{\epsilon_n^2 \over 2 \omega^2}
&\le&{\omega^2 m_{0, n}^2 \over 2} \cdot {r \mu_\omega(A_n) - c_1 \omega
   \over c_1 \omega - r \, \beta\omega} \cdot {\mu_\omega(G_n^{(1)})
   - \beta\omega \over \mu_\omega(A_n) - \beta\omega}   \nonumber  \\
& & + \, c(\mathfrak t, \beta) \cdot {\mu_\omega(G_n^{(1)}) -
   \beta\omega \over c_1 \omega - r \, \beta\omega}
   + {\omega^2 m_{2, n}^2 \over 2} \cdot {\mu_\omega(G_n^{(1)})
   - \mu_\omega(A_n) \over \mu_\omega(A_n) - \beta\omega}. \quad
\end{eqnarray*}

If $m_{0, n} > m_{2, n}$ for all $n$,
then since $\mu_\omega(G_n^{(1)}) - \mu_\omega(A_n) \ge 0$, we have
\begin{eqnarray*}
   -{\epsilon_n^2 \over 2 \omega^2}
&\le&{\omega^2 m_{0, n}^2 \over 2} \cdot {r \mu_\omega(A_n) - c_1 \omega
   \over c_1 \omega - r \, \beta\omega} \cdot {\mu_\omega(G_n^{(1)})
   - \beta\omega \over \mu_\omega(A_n) - \beta\omega}   \nonumber  \\
& & + \, {\omega^2 m_{0, n}^2 \over 2} \cdot {\mu_\omega(G_n^{(1)})
   - \mu_\omega(A_n) \over \mu_\omega(A_n) - \beta\omega}
   + c(\mathfrak t, \beta) \cdot {\mu_\omega(G_n^{(1)}) -
   \beta\omega \over c_1 \omega - r \, \beta\omega}   \\
&=&{\omega^2 m_{0, n}^2 \over 2} \cdot {r \mu_\omega(G_n^{(1)})
   - c_1 \omega \over c_1 \omega - r \, \beta\omega}
   + c(\mathfrak t, \beta) \cdot {\mu_\omega(G_n^{(1)}) -
   \beta\omega \over c_1 \omega - r \, \beta\omega}.
\end{eqnarray*}
Letting $n \to +\infty$, we obtain the contradiction $0 \le -\omega^2 m_0^2/2$.
Similarly, if $m_{0, n} < m_{2, n}$ for all $n$,
then $r \, \mu_\omega(A_n) - c_1 \omega > 0$ by (\ref{bnd-rk.1}) and
(\ref{bnd-rk.112}). Therefore,
\begin{eqnarray*}
   -{\epsilon_n^2 \over 2 \omega^2}
&\le&{\omega^2 m_{2, n}^2 \over 2} \cdot {r \mu_\omega(A_n) - c_1 \omega
   \over c_1 \omega - r \, \beta\omega} \cdot {\mu_\omega(G_n^{(1)})
   - \beta\omega \over \mu_\omega(A_n) - \beta\omega}   \nonumber  \\
& & + \, {\omega^2 m_{2, n}^2 \over 2} \cdot {\mu_\omega(G_n^{(1)})
   - \mu_\omega(A_n) \over \mu_\omega(A_n) - \beta\omega}
   + c(\mathfrak t, \beta) \cdot {\mu_\omega(G_n^{(1)}) -
   \beta\omega \over c_1 \omega - r \, \beta\omega}   \\
&=&{\omega^2 m_{2, n}^2 \over 2} \cdot {r \mu_\omega(G_n^{(1)})
   - c_1 \omega \over c_1 \omega - r \, \beta\omega}
   + c(\mathfrak t, \beta) \cdot {\mu_\omega(G_n^{(1)}) -
   \beta\omega \over c_1 \omega - r \, \beta\omega}.
\end{eqnarray*}
Again, letting $n \to +\infty$, we obtain the contradiction $0 \le
-\omega^2 m_2^2/2$.
\end{proof}

\begin{proposition}       \label{prop-loc-fnt}
The set of mini-walls is locally finite. More precisely,
fix $\beta, \omega \in \text{\rm Num}(X)_\Q$ with $\omega$ being ample,
$\mathfrak t = (r, c_1, c_2)$, and $I = [a, b]$ with $0 < a < b$.
Then there exist only finitely many mini-walls of type $(\mathfrak t,
\beta, \omega)$ in $I$.
\end{proposition}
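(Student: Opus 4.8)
The plan is to obtain the finiteness of mini-walls directly from the rank bound of Lemma~\ref{bnd-rk}: once the leading HN-filtration component of a destabilized object has controlled, discretely varying numerical invariants, the equation cutting out a mini-wall pins $m_0^2$ down to finitely many rational values.

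First I would fix a mini-wall $m_0 \in I = [a,b]$ together with the data witnessing it as in Definition~\ref{mini-walls}: an object $E \in \mathcal A^\sharp_{(\omega, \beta\omega)}$ with $\mathfrak t(E) = \mathfrak t$ which is $(Z_{m_1}, \mathcal P_{m_1})$-semistable for some $m_1 \in I$ but not $(Z_{m_2}, \mathcal P_{m_2})$-semistable for some $m_2 \in I$, and the leading HN-filtration component $A$ of $E$ with respect to $(Z_{m_2}, \mathcal P_{m_2})$. Since $E$ is not semistable at $m_2$, its HN-filtration is nontrivial, so $A$ is a proper sub-object of $E$ in $\mathcal A^\sharp_{(\omega, \beta\omega)}$, it is $(Z_{m_2}, \mathcal P_{m_2})$-semistable, it destabilizes $E$ (that is, $\phi(Z_{m_2}(A)) > \phi(Z_{m_2}(E))$), and $\phi(Z_{m_0}(A)) = \phi(Z_{m_0}(E))$. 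These are exactly the four bullet hypotheses of Lemma~\ref{bnd-rk} with the interval $[a, +\infty) \supseteq I$, so $|\rk(A)| \le N$ for a constant $N$ depending only on $\mathfrak t, \beta, \omega$ and $a$.

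Next I would control the remaining invariants of $A$. Putting $B = E/A \in \mathcal A^\sharp_{(\omega, \beta\omega)}$ and applying Lemma~\ref{En}(i) to $A$ and to $B$ gives
\[
\rk(A)\,\beta\omega \ \le \ c_1(A)\omega \ = \ c_1\omega - c_1(B)\omega \ \le \ c_1\omega - \bigl(r - \rk(A)\bigr)\beta\omega ,
\]
so with $|\rk(A)| \le N$ the quantity $c_1(A)\omega$ lies in a bounded set; as $\omega \in \text{\rm Num}(X)_\Q$ it also lies in a fixed lattice in $\R$, hence takes only finitely many values, and so does $q := r\,c_1(A)\omega - \rk(A)\,c_1\omega$. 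Moreover, since $E$ fails to be $(Z_{m_2}, \mathcal P_{m_2})$-semistable we must have $\phi(Z_{m_2}(E)) < 1$ (an object of the heart of phase $1$ has no sub-object of strictly larger phase, hence is semistable), and then formula (\ref{ZEm}) forces $c_1\omega - r\,\beta\omega > 0$. Rewriting $\phi(Z_{m_0}(A)) = \phi(Z_{m_0}(E))$ through (\ref{EB2}), in the shape of (\ref{bnd-rk.1}), yields
\[
\frac{\omega^2 m_0^2}{2}\, q \ = \ d(\mathfrak t, \beta, \omega, A),
\]
while the destabilizing inequality at $m_2$ yields the same left-hand side with $m_0$ replaced by $m_2$ and ``$=$'' by ``$>$''. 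If $q$ were $0$ these two relations would read $d(\mathfrak t,\beta,\omega,A) = 0$ and $0 > d(\mathfrak t,\beta,\omega,A)$, which is absurd; hence $q \ne 0$ and $m_0^2 = 2\,d(\mathfrak t,\beta,\omega,A)/(\omega^2 q)$.

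Finally I would harvest the finiteness. From $m_0 \le b$ and $q$ taking finitely many nonzero values, $d(\mathfrak t,\beta,\omega,A)$ lies in a bounded set. By its definition (\ref{dtboA}) together with (\ref{cE}) and the rationality of $\beta,\omega$, it also lies in a fixed lattice in $\R$ — here one only needs that $\ch_2$ of a bounded complex on a surface lies in $\tfrac12\Z$, so that $c(A) \in \tfrac1{D}\Z$ for a fixed $D$ regardless of the size of $c_1(A)$. A bounded subset of a lattice being finite, $d(\mathfrak t,\beta,\omega,A)$ takes only finitely many values, and therefore so does $m_0^2 = 2\,d(\mathfrak t,\beta,\omega,A)/(\omega^2 q)$; consequently there are only finitely many mini-walls of type $(\mathfrak t,\beta,\omega)$ in $I$. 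The main obstacle is already overcome in Lemma~\ref{bnd-rk}; beyond that, the only delicate points are that the finite upper endpoint $b$ of $I$ (as opposed to the half-line $[a,+\infty)$) is precisely what bounds $d(\mathfrak t,\beta,\omega,A)$, and that the degenerate cases $q = 0$ and $c_1\omega - r\,\beta\omega = 0$ describe the situation where the relevant phase is identically $1$, in which no destabilization can occur, so they contribute no mini-walls.
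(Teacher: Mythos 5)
Your proposal is correct and follows essentially the same route as the paper's proof of Proposition~\ref{prop-loc-fnt}: the rank bound from Lemma~\ref{bnd-rk}, the bound on $c_1(A)\omega$ via Lemma~\ref{En}~(i) applied to $A$ and $B=E/A$, and the identity $m_0^2 = 2\,d(\mathfrak t,\beta,\omega,A)/\bigl(\omega^2(r\,c_1(A)\omega-\rk(A)\,c_1\omega)\bigr)$ with nonvanishing denominator forced by the strict inequality at $m_2$. Your closing step (bounding $d$ and invoking discreteness of its values) is only a cosmetic variant of the paper's observation that $m_0^2$ is a rational number in $[a^2,b^2]$ with bounded denominator.
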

\begin{proof}
We may assume that $\beta, \omega \in \text{\rm Num}(X)$. Let ${\bf u} =
e^{-(\beta + i \, \omega)}$, and let $m_0$ be a mini-wall of type
$(\mathfrak t, \beta, \omega)$ in $I$.
Then $\phi \big (Z_{m_0}(A) \big ) = \phi \big (Z_{m_0}(E) \big )$
where $E \in \overline{\mathfrak M}_{{\bf u}_{m_1}}(\mathfrak t)$
for some $m_1 \in I$,
$E \not \in \overline{\mathfrak M}_{{\bf u}_{m_2}}(\mathfrak t)$
for some $m_2 \in I$, and $A$ is the leading HN-filtration component
of $E$ with respect to $(Z_{m_2}, \mathcal P_{m_2})$.
So $A$ is $(Z_{m_2}, \mathcal P_{m_2})$-semistable.
By Lemma~\ref{bnd-rk}, $|\rk(A)| \le N$ where $N$ depends only on
$\mathfrak t$, $\beta$, $\omega$ and $I$.

Since $\phi \big (Z_{m_0}(A) \big ) = \phi \big (Z_{m_0}(E) \big )$
and $A$ destablizes $E$ with respect to $(Z_{m_2}, \mathcal P_{m_2})$,
\begin{eqnarray*}
  {\omega^2 m_0^2 \over 2} \big ( r \, c_1(A) \omega -
    \rk(A) \, c_1 \omega \big )
&=&d(\mathfrak t, \beta, \omega, A)      \\
  {\omega^2 m_2^2 \over 2} \big ( r \, c_1(A) \omega -
    \rk(A) \, c_1 \omega \big )
&>&d(\mathfrak t, \beta, \omega, A).
\end{eqnarray*}
So $(r \, c_1(A) \omega - \rk(A) \, c_1 \omega) \ne 0$, and $m_0^2$
is equal to the rational number
\begin{eqnarray*}
{2 \, d(\mathfrak t, \beta, \omega, A) \over \omega^2 \cdot
\big ( r \, c_1(A) \omega - \rk(A) \, c_1 \omega \big )}
\quad \in [a^2, b^2].
\end{eqnarray*}
To prove that there are only finitely many choices for $m_0$,
it suffices to show that the positive integer
$|r \, c_1(A) \omega - \rk(A) \, c_1 \omega|$ from the
denominator is bounded from above by a number depending only
on $\mathfrak t$, $\beta$, $\omega$ and $I$. Since $|\rk(A)| \le N$,
it remains to prove that there exist $N_1$ and $N_2$ depending only
on $\mathfrak t$, $\beta$, $\omega$ and $I$ such that
\begin{eqnarray}  \label{prop-loc-fnt.1}
r \, N_1 \le r \, c_1(A) \omega \le r \, N_2.
\end{eqnarray}

Put $B = E/A$. Since $A, B \in \mathcal A^\sharp_{(\omega,
\beta\omega)}$, we see from Lemma~\ref{En}~(i) that
\begin{eqnarray}   \label{prop-loc-fnt.2}
c_1(A) \omega \ge \rk(A) \beta \omega, \qquad
c_1(B) \omega \ge \rk(B) \beta \omega.
\end{eqnarray}
Note that (\ref{prop-loc-fnt.1}) is trivially true if $r = 0$.
If $r < 0$, then by (\ref{prop-loc-fnt.2}),
\begin{eqnarray*}
   r \, c_1(A) \omega
&=&r \, c_1 \omega - r \, c_1(B) \omega
     \ge r \, c_1 \omega - r \, \rk(B) \, \beta \omega  \\
&=&r \, c_1 \omega - r \, (r - \rk(A)) \, \beta \omega
     \ge r \, c_1 \omega - |r| \, (|r| + N) \, |\beta \omega|.
\end{eqnarray*}
In addition, we have $r \, c_1(A) \omega \le r\, \rk(A) \, \beta
\omega \le |r| \, N \, |\beta \omega|$. Therefore,
(\ref{prop-loc-fnt.1}) holds for $r < 0$. Similarly, we see that
(\ref{prop-loc-fnt.1}) holds for $r > 0$ as well.
\end{proof}

We remark that when $I = [a, +\infty)$ with $a > 0$, the proof of
Proposition~\ref{prop-loc-fnt} does not go through since
it is unclear how to bound $|2\, d(\mathfrak t, \beta,
\omega, A)|$ from above.

\section{\bf Identify $\overline{\mathfrak M}_\Omega(\mathfrak t)$
and $\overline{\mathfrak M}_{{\bf u}_m}(\mathfrak t)$ for $m \gg 0$}
\label{sect_Identify}

In this section, we will strength Lemma 2.6. We show that there exists a constant
$M$ depending only on $\mathfrak t(E), \omega$ and $\beta$ such that
$E \in D^b(X)$ is $(Z_\Omega, \mathcal P_\Omega)$-semistable if and only if
$E$ is $(Z_m, \mathcal P_m)$-semistable for some $m \ge M$.

\begin{definition} \label{def-universal}
If $E \in D^b(X)$ and $\beta, \omega \in {\rm Num}(X)_\R$
are fixed, then a constant is {\it universal} if it depends only on
$\mathfrak t(E), \omega$ and $\beta$.
\end{definition}

\begin{lemma} \label{unstable-bnd}
Let notations be from Subsect.~\ref{subsect_Large},
and let $\omega \in \text{\rm Num}(X)_\Q$.
If $E \in D^b(X)$ is not $(Z_\Omega, \mathcal P_\Omega)$-semistable,
then there exists a positive number $M$,
depending only on $\mathfrak t(E), \omega$ and $\beta$, such that
$E$ is not $(Z_m, \mathcal P_m)$-semistable for all $m \ge M$.
\end{lemma}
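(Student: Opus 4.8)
The plan is to argue by contradiction, playing the cohomology bounds of Section~3 against the structure of $(Z_\Omega, \mathcal P_\Omega)$-semistable objects (Lemma~\ref{4.2}). First one reduces to the heart: if $E$ is not a shift of an object of $\mathcal A^\sharp_{(\omega, \beta\omega)}$ then $E$ is never semistable and there is nothing to prove; if $E = F[k]$ with $F \in \mathcal A^\sharp_{(\omega, \beta\omega)}$, then, since semistability is shift‑invariant and $\mathcal A^\sharp_{(\omega,\beta\omega)} = \mathcal P_\Omega((0,1])$ is the common heart of $(Z_\Omega, \mathcal P_\Omega)$ and of every $(Z_m, \mathcal P_m)$, it suffices to treat $F$, and since $\mathfrak t(F)$ takes one of two values determined by $\mathfrak t(E)$ the constant $M$ will still depend only on $\mathfrak t(E), \omega, \beta$. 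So assume the assertion fails for some numerical type $\mathfrak t$: there is a sequence $E_n \in \mathcal A^\sharp_{(\omega, \beta\omega)}$ with $\mathfrak t(E_n) = \mathfrak t$, each $E_n$ not $(Z_\Omega, \mathcal P_\Omega)$-semistable, yet $(Z_{m_n}, \mathcal P_{m_n})$-semistable for some $m_n \to +\infty$. For $n \gg 0$ we have $m_n \ge 1$, so $E_n \in \overline{\mathfrak M}_{{\bf u}_{m_n}}(\mathfrak t)$ with $m_n \in [1, +\infty)$, and Proposition~\ref{bnd-H01} provides a universal $N$ with $\rk(\mathcal H^0(E_n)), \rk(\mathcal H^{-1}(E_n)) \le N$.

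Since $E_n$ is not $(Z_\Omega, \mathcal P_\Omega)$-semistable, let $A_n$ be its leading HN-filtration component with respect to $(Z_\Omega, \mathcal P_\Omega)$ (Definition~\ref{leading}); then $A_n$ is a proper nonzero sub-object of $E_n$ in $\mathcal A^\sharp_{(\omega, \beta\omega)}$, it is $(Z_\Omega, \mathcal P_\Omega)$-semistable, and it destabilizes $E_n$ with respect to $(Z_\Omega, \mathcal P_\Omega)$. Write $\kappa_n = r\, c_1(A_n)\omega - \rk(A_n)\, c_1\omega$ and recall $d(\mathfrak t, \beta, \omega, A_n)$ from (\ref{dtboA}). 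By (\ref{ZEm})--(\ref{EB2}), for every $m > 0$ the object $A_n$ destabilizes $E_n$ with respect to $(Z_m, \mathcal P_m)$ if and only if $\tfrac{\omega^2 m^2}{2}\,\kappa_n > d(\mathfrak t, \beta, \omega, A_n)$. Unwinding the polynomial phase order, $A_n$ destabilizing $E_n$ with respect to $(Z_\Omega, \mathcal P_\Omega)$ means this strict inequality holds for $m \gg 0$, which forces $\kappa_n \ge 0$; and $\kappa_n = 0$ would give $d(\mathfrak t, \beta, \omega, A_n) < 0$, hence that $A_n$ destabilizes $E_n$ with respect to every $(Z_m, \mathcal P_m)$, contradicting $(Z_{m_n}, \mathcal P_{m_n})$-semistability of $E_n$; thus $\kappa_n$ is a positive rational with bounded denominator ($\omega$ being rational), so bounded below by a universal constant. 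As $E_n$ is $(Z_{m_n}, \mathcal P_{m_n})$-semistable, $A_n$ does not destabilize it at $m_n$, i.e.\ $\tfrac{\omega^2 m_n^2}{2}\kappa_n \le d(\mathfrak t, \beta, \omega, A_n)$, whence $d(\mathfrak t, \beta, \omega, A_n) \to +\infty$. (Moreover $c_1\omega - r\beta\omega > 0$: otherwise Lemma~\ref{En}(i) applied to $A_n$ and $E_n/A_n$ forces $c_1(A_n)\omega = \rk(A_n)\beta\omega$ and $\kappa_n = 0$.) It therefore suffices to bound $d(\mathfrak t, \beta, \omega, A_n)$ above by a universal constant.

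By (\ref{dtboA}), $d(\mathfrak t, \beta, \omega, A_n) = c(A_n)(c_1\omega - r\beta\omega) - c(\mathfrak t, \beta)(c_1(A_n)\omega - \rk(A_n)\beta\omega)$, and Lemma~\ref{En}(i) applied to $A_n$ and to $E_n/A_n$ gives $0 \le c_1(A_n)\omega - \rk(A_n)\beta\omega \le c_1\omega - r\beta\omega$, so the second summand is universally bounded. Next, $\kappa_n > 0$ together with Lemma~\ref{En}(ii) (for the exact sequences $0 \to A_n \to E_n \to E_n/A_n \to 0$) rules out $\rk(A_n) \to +\infty$, since then $\mu_\omega(A_n) \to \beta\omega$ would give $\kappa_n/\rk(A_n) \to r\beta\omega - c_1\omega < 0$; and $\rk(A_n) \ge -\rk(\mathcal H^{-1}(A_n)) \ge -\rk(\mathcal H^{-1}(E_n)) \ge -N$. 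Hence $|\rk(A_n)|$, and then $c_1(A_n)\omega$, are universally bounded, so it remains to bound $c(A_n) = -\ch_2(A_n) + c_1(A_n)\beta - \rk(A_n)\beta^2/2$ from above, i.e.\ $\ch_2(A_n)$ from below. Here one uses that $A_n$ is $(Z_\Omega, \mathcal P_\Omega)$-semistable, so by Lemma~\ref{4.2} it is a torsion sheaf, a torsion-free $\mu_\omega$-semistable sheaf, or a two-term complex with $\mathcal H^{-1}(A_n)$ torsion-free $\mu_\omega$-semistable and $\mathcal H^0(A_n)$ $0$-dimensional; via the long exact cohomology sequence of $0 \to A_n \to E_n \to E_n/A_n \to 0$ the constituents of $A_n$ are built from subsheaves and quotients of the rank-$\le N$ sheaves $\mathcal H^0(E_n), \mathcal H^{-1}(E_n)$ and from a $0$-dimensional sheaf whose length is universally bounded because $E_n$ is $(Z_{m_n}, \mathcal P_{m_n})$-semistable (a $0$-dimensional subsheaf has phase $1$ and would destabilize $E_n$ at $m_n$); since moreover $c_1(\mathcal H^0(E_n))$ and $c_1(\mathcal H^{-1}(E_n))$ lie in a universally bounded set by Lemma~\ref{En}(i) and the Hodge Index Theorem (Lemma~\ref{compare-bnd}(i)), the boundedness of families of subsheaves of bounded slope yields a universal lower bound for $\ch_2(A_n)$, the desired contradiction.

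I expect this last step — the uniform lower bound for $\ch_2(A_n)$ — to be the main obstacle. Bogomolov-type inequalities (cf.\ Lemma~\ref{compare-bnd}(ii)) only control $\ch_2$ from above, so the lower bound must be extracted by realizing $A_n$ inside the universally rank-bounded cohomology sheaves of $E_n$; this forces one to trace carefully through the three alternatives of Lemma~\ref{4.2}, and it is exactly here that the hypothesis that each $E_n$ is genuinely $(Z_{m_n}, \mathcal P_{m_n})$-semistable — not merely polynomially unstable — is essential, both to bound the $0$-dimensional torsion and to invoke boundedness of the relevant subsheaves. The reductions in the first three paragraphs, by contrast, are routine manipulations with (\ref{EB2}) and Lemmas~\ref{En}, \ref{4.2} and Proposition~\ref{bnd-H01}.
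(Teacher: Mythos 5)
Your reduction to the heart, the choice of $A_n$ as the polynomial HN leading factor, the dichotomy $\kappa_n>0$ versus $\kappa_n=0$, the universal positive lower bound for $\kappa_n$ (rationality of $\omega$), the conclusion $d(\mathfrak t,\beta,\omega,A_n)\to+\infty$, and the bound $0\le c_1(A_n)\omega-\rk(A_n)\beta\omega\le c_1\omega-r\beta\omega$ are all sound, and your rank bound for $A_n$ via Lemma~\ref{En}(ii) works. But the step you yourself flag as the main obstacle is a genuine gap, and the justification you sketch for it does not work. You need a universal \emph{upper} bound for $c(A_n)$, i.e.\ a lower bound for $\ch_2(A_n)$ \emph{and} an upper bound for $c_1(A_n)\beta$. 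Neither follows from what you have: bounding $\rk(A_n)$ and $c_1(A_n)\omega$ constrains the class $c_1(A_n)$ only through its $\omega$-degree, and the Hodge Index Theorem (Lemma~\ref{compare-bnd}(i)) gives one-sided control ($c_1(A_n)^2$ bounded above), so $c_1(A_n)\beta$ and $(c_1(A_n)/\rk-\beta)^2$ can still run off to $\pm\infty$ in the unfavourable direction; likewise ``boundedness of families of subsheaves of bounded slope'' is false as a source of lower bounds on $\ch_2$ --- already subsheaves of a \emph{fixed} sheaf with fixed rank and slope (e.g.\ $I_Z\subset\mathcal O_X$) have $\ch_2$ unbounded below, and here the ambient sheaves $\mathcal H^i(E_n)$ vary, with only their ranks and $\omega$-degrees controlled (your claim that their $c_1$'s ``lie in a universally bounded set'' is not correct). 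The auxiliary claim that the $0$-dimensional torsion has universally bounded length ``because a $0$-dimensional subsheaf has phase $1$ and would destabilize $E_n$ at $m_n$'' is also incomplete: a $0$-dimensional subsheaf of $\mathcal H^0(E_n)$ is a subobject of a \emph{quotient} of $E_n$ in $\mathcal A^\sharp_{(\omega,\beta\omega)}$, and lifting it to a subobject of $E_n$ meets an obstruction in $\Ext^2$, so destabilization is not automatic. All of the paper's bounding lemmas (Lemma~\ref{compare-bnd}(ii), and the arguments inside Lemma~\ref{semistable-bnd}) go in the opposite, Bogomolov-accessible direction, namely $c/\rk$ bounded \emph{below}; nothing in the paper, and nothing in your sketch, produces the upper bound on $c(A_n)$ that your contradiction scheme requires, and extracting it would have to use the $(Z_{m_n},\mathcal P_{m_n})$-semistability of $E_n$ in an essential way that you have not supplied.

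For contrast, the paper's proof of this lemma avoids your hard bound altogether by being constructive rather than by compactness: given a polynomially unstable $E\in\mathcal A^\sharp_{(\omega,\beta\omega)}$, it \emph{chooses} a specific destabilizer --- the smallest-slope $\mu_\omega$-HN quotient of $\mathcal H^0(E)$, a $0$- or $1$-dimensional torsion subsheaf, or (when $\rk(E)<0$) the largest-slope HN subsheaf $A$ of $\mathcal H^{-1}(E)$ shifted by $[1]$ --- splitting into the cases $\rk(E)=0,>0,<0$ of Lemma~\ref{4.2}. For these explicit choices one only needs a universal \emph{gap} in $\mu_\omega(B)-\mu_\omega(E)$ (obtained from the rank bounds and the tilting inequalities) together with the easy-direction bound $c(B)/\rk(B)\ge$ universal constant from Lemma~\ref{compare-bnd}(ii); then (\ref{EB2}) holds for all $m\ge M$ with $M$ universal, and the exceptional configurations (e.g.\ $E$ itself $\mu_\omega$-semistable torsion free in Case~2, or the $\mu_\omega(\mathcal B)=\mu_\omega(\mathcal E)$ subcase of Case~3) are ruled out or handled with $M=1$. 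If you want to salvage your approach you would have to prove the universal upper bound for $c(A_n)$ (equivalently replace it by a lower bound for $c(E_n/A_n)$ and control its torsion and $\mathcal H^{-1}$ parts), which is essentially the whole content of the lemma; as written, the proposal does not prove the statement.
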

\noindent
{\it Proof.}
It suffices to prove the statement for $E \in \mathcal P_\Omega((0, 1])
= \mathcal A^\sharp_{(\omega, \beta \omega)}$. Let
\begin{eqnarray}   \label{AEB}
0 \to A \to E \to B \to 0
\end{eqnarray}
be an exact sequence in $\mathcal P_\Omega((0, 1])$ destablizing $E$
such that the object $B$ is $(Z_\Omega, \mathcal P_\Omega)$-semistable.
Then $\phi \big (Z_\Omega(E)(m) \big ) >
\phi \big (Z_\Omega(B)(m) \big )$ for $m \gg 0$. So (\ref{EB2})
holds for $m \gg 0$. By Lemma~\ref{En}~(i), $c_1(E) \cdot \omega -
\rk(E) \, \beta \omega \ge 0$ and $c_1(B) \cdot \omega -
\rk(B) \, \beta \omega > 0$. If $c_1(E) \cdot \omega -
\rk(E) \, \beta \omega = 0$, then $E$ is $(Z_\Omega, \mathcal P_\Omega)$-semistable
which contradicts to our assumption. So $c_1(E) \cdot \omega - \rk(E) \,
\beta \omega > 0$. Then, we have
\begin{eqnarray}   \label{EB1}
c_1(E) \cdot \omega - \rk(E) \, \beta \omega > 0, \qquad
c_1(B) \cdot \omega - \rk(B) \, \beta \omega > 0.
\end{eqnarray}
Now our proof is divided into three cases: $\rk(E) = 0$, $\rk(E) > 0$ and $\rk(E) < 0$.

\medskip
{\bf Case 1: $\rk(E) = 0$}. Then $c_1(E) \cdot \omega > 0$ by (\ref{EB1}).
Since (\ref{EB2}) holds for $m \gg 0$, we have $\rk(B) \ge 0$.
If $\rk(B) = 0$, then (\ref{EB2}) holds for all $m > 0$.
So $B$ destablizes $E$ for all $m > 0$, and we can take $M = 1$.
In the following, we assume that $\rk(B) > 0$.
By Lemma~\ref{4.2}, $B$ is a torsion free $\mu_\omega$-semistable
sheaf with $\mu_\omega(B) > \beta \omega$. From (\ref{AEB}),
we obtain an exact sequence of sheaves
$
0 \to \mathcal H^0(A) \to \mathcal H^0(E) \to B \to 0.
$
So $\rk(\mathcal H^0(E)) > 0$. Going backwards,
let $\W B$ to be the HN-filtration quotient of
$\mathcal H^0(E)$ with smallest $\mu_\omega$-slope. Then,
$\mu_\omega(\mathcal H^0(E)/{\rm Tor}(\mathcal H^0(E))) \ge
\mu_\omega(\W B)$ and $\W B$ is $\mu_\omega$-semistable.
Since $\mathcal H^0(E) \in \mathcal T_{(\omega, \beta \omega)}$,
we also have $\mu_\omega(\W B) > \beta \omega$. Therefore,
\begin{eqnarray}   \label{W-B1}
\mu_\omega(\mathcal H^0(E)/{\rm Tor}(\mathcal H^0(E))) \ge
\mu_\omega(\W B) > \beta \omega.
\end{eqnarray}
By Lemma~\ref{quotient}~(i), we have an exact sequence
$0 \to \W A \to E \to \W B \to 0$ in
$\mathcal A^\sharp_{(\omega, \beta \omega)}$
which destablizes $E$ in view of (\ref{EB2}) (replace $B$
there by $\W B$). Hence, replacing $B$ in (\ref{AEB}) by
$\W B$, we may assume in (\ref{AEB}) that $B = B[0]$ satisfies:
\begin{eqnarray}   \label{W-B2}
\mu_\omega(\mathcal H^0(E)/{\rm Tor}(\mathcal H^0(E))) \ge
\mu_\omega(B) > \beta \omega.
\end{eqnarray}
Note that $\rk(\mathcal H^{-1}(E)) = \rk(\mathcal H^0(E)) > 0$.
Since $\mathcal H^{-1}(E) \in \mathcal F_{(\omega, \beta \omega)}$,
$\mu_\omega(\mathcal H^{-1}(E)) \le \beta \omega$.
Since $c_1(\mathcal H^0(E)) = c_1(E) +
c_1(\mathcal H^{-1}(E))$ and $c_1(E) \cdot \omega > 0$, we have
\begin{eqnarray*}
    \mu_\omega(\mathcal H^0(E)/{\rm Tor}(\mathcal H^0(E)))
&\le&\mu_\omega(\mathcal H^0(E)) = {c_1(\mathcal H^0(E)) \cdot \omega \over
   \rk(\mathcal H^0(E))}  \\
&=&{(c_1(E) + c_1(\mathcal H^{-1}(E))) \cdot \omega \over
  \rk(\mathcal H^{-1}(E))} \le c_1(E) \cdot \omega + \beta \omega.
\end{eqnarray*}
Combining with (\ref{W-B2}), $(c_1(E) \cdot \omega + \beta \omega)
\ge \mu_\omega(B) > \beta \omega$.
By Lemma~\ref{compare-bnd}~(ii), $c(B)/\rk(B)$ is bounded
from below by a universal constant. By (\ref{EB2}), there exists a constant $M$,
depending only on $\mathfrak t(E), \omega$ and $\beta$, such that whenever $m \ge M$,
$
\phi \big (Z_\Omega(E)(m) \big ) > \phi \big (Z_\Omega(B)(m) \big )
$
and so $E$ is not $(Z_m, \mathcal P_m)$-semistable.

\medskip
{\bf Case 2: $\rk(E) > 0$}. Then $\mu_\omega(E) > \beta \omega$
by (\ref{EB1}), and $\rk(\mathcal H^0(E)) > 0$. Let
$\mathcal E = \mathcal H^{-1}(E)$. Assume that $\mathcal E \ne 0$.
Then $\mu_\omega(\mathcal E) \le \beta \omega$
since $\mathcal E \in \mathcal F_{(\omega, \beta \omega)}$.
As in Case~1, we can choose the object $B$ in
(\ref{AEB}) to be the HN-filtration quotient of
$\mathcal H^0(E)$ with smallest $\mu_\omega$-slope.
Then $B$ is semistable and satisfies
(\ref{W-B2}). By (\ref{W-B2}),
\begin{eqnarray}   \label{case2.1}
     \mu_\omega(B) - \mu_\omega(E)
&\le&\mu_\omega(\mathcal H^0(E)) - \mu_\omega(E) = {(c_1(E) + c_1(\mathcal E)) \omega \over
   \rk(E) + \rk(\mathcal E)} - \mu_\omega(E) \nonumber  \\
&=&{\mu_\omega(\mathcal E) - \mu_\omega(E) \over
   1 + \rk(E)/\rk(\mathcal E)}  \le {\beta \omega - \mu_\omega(E) \over
   1 + \rk(E)/\rk(\mathcal E)} \nonumber  \\
&\le&{\beta \omega - \mu_\omega(E) \over
   1 + \rk(E)} < 0.
\end{eqnarray}
So $\mu_\omega(E) > \mu_\omega(B) > \beta \omega$.
By Lemma~\ref{compare-bnd}~(ii), $c(B)/\rk(B)$ is bounded from
below by a universal constant. Now (\ref{EB2}) is equivalent to
\begin{eqnarray}  \label{case2.3}
  {\omega^2 m^2 \over 2} \big ( \mu_\omega(B) -
    \mu_\omega(E) \big )
< {c(B) \over \rk(B)} \, \big ( \mu_\omega(E) -
    \beta \omega \big ) - {c(E) \over \rk(E)} \,
    \big ( \mu_\omega(B) - \beta \omega \big ).
\end{eqnarray}
In view of the negative upper bound $(\beta \omega -
\mu_\omega(E))/(1 + \rk(E))$ for $\big ( \mu_\omega(B) -
\mu_\omega(E) \big )$ from (\ref{case2.1}),
there exists a constant $M$, depending only
on $\mathfrak t(E), \omega$ and $\beta$, such that
$\phi \big (Z_\Omega(E)(m) \big ) >
\phi \big (Z_\Omega(B)(m) \big )$ whenever $m \ge M$.
Hence our lemma holds.

Let $\mathcal E = 0$. Then $E = \mathcal H^0(E)$ has positive rank.
If ${\rm Tor}(E)$ contains a $0$-dimensional subsheaf $Q$,
then $Q \in \mathcal A^\sharp_{(\omega, \beta \omega)}$ is
a proper sub-object of $E$ destablizing $E$ with respect to
$(Z_m, \mathcal P_m)$ for all $m > 0$ and we are done.
If ${\rm Tor}(E)$ is a $1$-dimensional torsion,
then we can choose $B$ in (\ref{AEB}) to be the HN-filtration
quotient of $E$ with smallest $\mu_\omega$-slope.
Now $B$ is $\mu_\omega$-semistable and satisfies
$$
\beta \omega < \mu_\omega(B) \le \mu_\omega(E/{\rm Tor}(E)) \le
\mu_\omega(E) - 1/\rk(E).
$$
So $\mu_\omega(B) - \mu_\omega(E) \le -1/\rk(E)$.
Again $c(B)/\rk(B)$ is bounded from below by a universal constant.
In view of (\ref{case2.3}), our lemma holds. In the following,
assume that ${\rm Tor}(E) = 0$. Let $\W B$ be the HN-filtration
quotient of $E$ with smallest $\mu_\omega$-slope. Then $\W B$
is $\mu_\omega$-semistable and satisfies the inequalities
$
\mu_\omega(E) \ge \mu_\omega(\W B) > \beta \omega.
$
If $\mu_\omega(E) > \mu_\omega(\W B)$, then we can choose
the object $B$ in (\ref{AEB}) such that $B = \W B$.
Since $\rk(B) < \rk(E)$, the rational number $\mu_\omega(B) - \mu_\omega(E)$
is bounded from above by a negative universal constant.
Hence in view of (\ref{case2.3}), our lemma holds.
We are left with the case when $\mu_\omega(E) = \mu_\omega(\W B)$,
i.e., $E = \W B$ is $\mu_\omega$-semistable with $\mu_\omega(E)
> \beta \omega$. We claim that this is impossible.
Indeed, we see from (\ref{AEB}) that $A \ne 0$ is
a torsion free sheaf and sits in the exact sequence
\begin{eqnarray}   \label{calE=0.1}
0 \to \mathcal H^{-1}(B) \to A \to E \to \mathcal H^0(B) \to 0.
\end{eqnarray}
Since $\phi \big (Z_\Omega(E)(m) \big ) < \phi \big (Z_\Omega(A)(m)
\big )$ for $m \gg 0$, we see from Remark~\ref{compare-phase} that
\begin{eqnarray}   \label{AE1}
  {\omega^2 m^2 \over 2} \big ( \mu_\omega(A) -
    \mu_\omega(E) \big )
> {c(A) \over \rk(A)} \, \big ( \mu_\omega(E) -
    \beta \omega \big ) - {c(E) \over \rk(E)} \,
    \big ( \mu_\omega(A) - \beta \omega \big )
\end{eqnarray}
for $m \gg 0$. So $\mu_\omega(A) \ge \mu_\omega(E)$. If $\mu_\omega(A) = \mu_\omega(E)$,
then (\ref{AE1}) holds for all $m > 0$ and our lemma holds by taking $M = 1$.
In the following, assume that $\mu_\omega(A) > \mu_\omega(E)$.
Since $E$ is $\mu_\omega$-semistable, $\mathcal B :=
\mathcal H^{-1}(B) \ne 0$ by (\ref{calE=0.1}). By Lemma~\ref{4.2},
$\mathcal B$ is $\mu_\omega$-semistable with
$\mu_\omega(\mathcal B) \le \beta \omega$,
and $\mathcal H^0(B)$ is a $0$-dimensional torsion sheaf.
Let $\mathcal G$ be the image of the map $A \to E$ from
(\ref{calE=0.1}). Then we have two exact sequences of sheaves:
\begin{eqnarray}
&0 \to \mathcal B \to A \to \mathcal G  \to 0,&
   \label{calE=0.2}   \\
&0 \to \mathcal G  \to E \to \mathcal H^0(B) \to 0.&
   \label{calE=0.3}
\end{eqnarray}
By (\ref{calE=0.3}), $\mu_\omega(\mathcal G) = \mu_\omega(E)
< \mu_\omega(A)$. So $\mu_\omega(\mathcal B) > \mu_\omega(A)$
by (\ref{calE=0.2}). However, this contradicts to
$\mu_\omega(\mathcal B) \le \beta \omega < \mu_\omega(E) < \mu_\omega(A)$.

\medskip
{\bf Case 3: $\rk(E) < 0$}. Let $\mathcal E =
\mathcal H^{-1}(E)$. Then $\mu_\omega(E) < \beta \omega$
by (\ref{EB1}), and $\mathcal E \ne 0$ is torsion free.
Assume that $\rk(\mathcal H^0(E)) > 0$.
As in Case 1, we can choose the object $B$ in
(\ref{AEB}) to be the HN-filtration quotient of
$\mathcal H^0(E)$ with smallest $\mu_\omega$-slope.
Then $B$ is $\mu_\omega$-semistable and satisfies
(\ref{W-B2}), and $\mu_\omega(\mathcal E) \le \beta \omega$
since $\mathcal E \in \mathcal F_{(\omega, \beta \omega)}$.
By (\ref{W-B2}),
\begin{eqnarray}   \label{case3.1}
    \mu_\omega(B) - \mu_\omega(E)
&\le&\mu_\omega(\mathcal H^0(E)) - \mu_\omega(E)   \nonumber \\
&=&{\mu_\omega(\mathcal E) - \mu_\omega(E) \over \rk(\mathcal H^0(E))/\rk(\mathcal E)}
              \nonumber \\
&\le&(\beta \omega - \mu_\omega(E)) \cdot {\rk(\mathcal E) \over \rk(\mathcal H^0(E))}
              \nonumber \\
&=&(\beta \omega - \mu_\omega(E)) \cdot \left ( 1 -
   {\rk(E) \over \rk(\mathcal H^0(E))} \right )    \nonumber \\
&\le&(\beta \omega - \mu_\omega(E)) \cdot ( 1 - \rk(E)).
\end{eqnarray}
Combining with (\ref{W-B2}), we conclude that
\begin{eqnarray}   \label{case3.2}
\mu_\omega(E) + (\beta \omega - \mu_\omega(E)) \cdot ( 1 - \rk(E))
\ge \mu_\omega(B) > \beta \omega.
\end{eqnarray}
So $c(B)/\rk(B)$ is bounded from below by a constant depending
only on $\mathfrak t(E), \omega$ and $\beta$. Also, $\mu_\omega(B)
- \mu_\omega(E) > \beta \omega - \mu_\omega(E) > 0$.
Now (\ref{EB2}) is equivalent to
\begin{eqnarray}  \label{case3.3}
  {\omega^2 m^2 \over 2} \big ( \mu_\omega(B) -
    \mu_\omega(E) \big )
> {c(B) \over \rk(B)} \, \big ( \mu_\omega(E) -
    \beta \omega \big ) - {c(E) \over \rk(E)} \,
    \big ( \mu_\omega(B) - \beta \omega \big ).
\end{eqnarray}
It follows that there exists a constant $M$, depending only
on $\mathfrak t(E), \omega$ and $\beta$, such that $E$ is not
$(Z_m, \mathcal P_m)$-semistable whenever $m \ge M$.

We are left with the case $\rk(\mathcal H^0(E)) = 0$.
Assume that either $\mathcal H^{-1}(E)$ is $\mu_\omega$-unstable,
or the support of $\mathcal H^0(E)$ has dimension $1$.
Let $A$ be the HN-filtration subsheaf of
$\mathcal H^{-1}(E)$ with largest $\mu_\omega$-slope.
Then $A \in \mathcal F_{(\omega, \beta \omega)}$ is
$\mu_\omega$-semistable with $\mu_\omega(A) \le \beta \omega$.
When $\mathcal H^{-1}(E)$ is $\mu_\omega$-unstable,
$\mu_\omega(A) > \mu_\omega(\mathcal H^{-1}(E))$;
so $\mu_\omega(A) \ge \mu_\omega(\mathcal H^{-1}(E)) + d_1$ for some
positive number $d_1$ depending only on $\rk(E)$ and $\omega$.
When the support of $\mathcal H^0(E)$ has dimension $1$, we have
\begin{eqnarray*}
      \mu_\omega(A)
&\ge& \mu_\omega(\mathcal H^{-1}(E))
   = {\big ( c_1(\mathcal H^0(E)) - c_1(E) \big ) \cdot \omega
  \over \rk \, \mathcal H^{-1}(E)}        \\
&\ge& {1 - c_1(E) \cdot \omega \over - \rk(E)}
  = \mu_\omega(E) - {1 \over \rk(E)}.
\end{eqnarray*}
In either case, $\beta \omega \ge \mu_\omega(A) \ge \mu_\omega(E) + d_2$
where $d_2$ is a positive number depending only on $\rk(E)$ and
$\omega$. In particular, $\mu_\omega(A) - \mu_\omega(E) \ge d_2$.
Since $\mu_\omega(E) < \beta \omega$,
we see from Lemma~\ref{compare-bnd}~(ii) that
${c(A)/\rk(A)} \cdot \big ( \mu_\omega(E) - \beta \omega \big )$
is bounded from above by a constant depending only on
$\mathfrak t(E), \omega$ and $\beta$. Hence there exists
$M$ depending only on $\mathfrak t(E), \omega$ and $\beta$
such that (\ref{AE1}) holds whenever $m \ge M$.
By Remark~\ref{compare-phase}, $\phi \big (Z_\Omega(E)(m) \big )
< \phi \big (Z_\Omega(A[1])(m) \big )$ whenever $m \ge M$.
By Lemma~\ref{quotient}~(ii), $A[1]$ is a proper sub-object of $E$
in $\mathcal A^\sharp_{(\omega, \beta \omega)}$.
So $A[1]$ destablizes $E$ whenever $m \ge M$.

Finally, assume that $\mathcal H^{-1}(E)$ is $\mu_\omega$-semistable
and $\mathcal H^0(E)$ is a $0$-dimensional torsion sheaf.
By the exact sequence of sheaves
\begin{eqnarray}  \label{case3.4}
0 \to \mathcal H^{-1}(A) \to \mathcal E \to \mathcal B \to
\mathcal H^0(A) \to \mathcal H^0(E) \to \mathcal H^0(B) \to 0,
\end{eqnarray}
$\mathcal H^0(B)$ is a $0$-dimensional torsion sheaf. Since $B$
destablizes $E$ with respect to $(Z_\Omega, \mathcal P_\Omega)$,
$B$ can not be a $0$-dimensional torsion sheaf.
By Lemma~\ref{4.2}, $\mathcal B := \mathcal H^{-1}(B)$ is
a torsion free $\mu_\omega$-semistable sheaf with
$\mu_\omega(\mathcal B) \le \beta \omega$. Since $\mu_\omega(E)
= \mu_\omega(\mathcal E)$ and $\mu_\omega(B) =
\mu_\omega(\mathcal B)$, (\ref{EB2}) is equivalent to
\begin{eqnarray}     \label{case3.5}
  {\omega^2 m^2 \over 2} \big ( \mu_\omega(\mathcal B) -
    \mu_\omega(\mathcal E) \big )
< {c(B) \over \rk(B)} \, \big ( \mu_\omega(\mathcal E)
    - \beta \omega \big ) - {c(E) \over \rk(E)} \,
    \big ( \mu_\omega(\mathcal B) - \beta \omega \big ).
\end{eqnarray}
Since it holds for $m \gg 0$, $\mu_\omega(\mathcal B)
\le \mu_\omega(\mathcal E)$. If $\mu_\omega(\mathcal B)
= \mu_\omega(\mathcal E)$, then (\ref{case3.5}) holds for all
$m \ge 1$; so our lemma is true with $M = 1$.
Let $\mu_\omega(\mathcal B) < \mu_\omega(\mathcal E)$.
Then $\mu_\omega(\mathcal B) < \mu_\omega(\mathcal E) <
\beta \omega$. Since $\mathcal E$ and $\mathcal B$ are
$\mu_\omega$-semistable, the map $\mathcal E \to \mathcal B$ in
(\ref{case3.4}) is zero. So we obtain the exact sequence
$
0 \to \mathcal B \to \mathcal H^0(A) \to \mathcal H^0(E)
\to \mathcal H^0(B) \to 0.
$
Since $\mathcal H^0(A) \in \mathcal T_{(\omega, \beta \omega)}$,
we get the contradiction
\begin{equation}
\beta \omega < \mu_\omega(\mathcal H^0(A)) = \mu_\omega(\mathcal B)
< \beta \omega.   \tag*{$\qed$}
\end{equation}

\begin{lemma} \label{semistable-bnd}
Let notations be from Subsect.~\ref{subsect_Large},
and let $\omega \in \text{\rm Num}(X)_\Q$.
If an object $E \in D^b(X)$ is $(Z_\Omega, \mathcal P_\Omega)$-semistable,
then there exists a positive $M$,
depending only on $\mathfrak t(E), \omega$ and $\beta$, such that
$E$ is $(Z_m, \mathcal P_m)$-semistable for all $m \ge M$.
\end{lemma}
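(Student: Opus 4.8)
The plan is to prove the converse estimate to Lemma~\ref{unstable-bnd}: for a fixed numerical type $\mathfrak t$, the set of those $m \in (0, +\infty)$ for which some $(Z_\Omega, \mathcal P_\Omega)$-semistable $E$ with $\mathfrak t(E) = \mathfrak t$ fails to be $(Z_m, \mathcal P_m)$-semistable is bounded above by a constant depending only on $\mathfrak t(E), \omega$ and $\beta$. As in Lemma~\ref{unstable-bnd} it suffices to treat $E \in \mathcal P_\Omega((0, 1]) = \mathcal A^\sharp_{(\omega, \beta\omega)}$, and we may assume $\beta, \omega \in \text{\rm Num}(X)$. So suppose $E$ is not $(Z_m, \mathcal P_m)$-semistable with $m \ge 1$, let $A$ be the leading HN-filtration component of $E$ with respect to $(Z_m, \mathcal P_m)$, and put $B = E/A$; then $A$ is $(Z_m, \mathcal P_m)$-semistable, $A$ is a proper sub-object of $E$ in $\mathcal A^\sharp_{(\omega, \beta\omega)}$, and $\phi(Z_m(A)) > \phi(Z_m(E))$.

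Rewriting this phase inequality through (\ref{EB2}) makes it a quadratic inequality in $m$ with leading coefficient $\frac{\omega^2}{2}(\rk(A)\, c_1\omega - r\, c_1(A)\omega)$. Since $E$ is $(Z_\Omega, \mathcal P_\Omega)$-semistable, $A$ cannot destabilize $E$ for $m \gg 0$, so this coefficient is $\ge 0$, and if it is $0$ then (\ref{EB2}) is $m$-independent and $A$ would destabilize $E$ for all $m$ or for none -- both impossible. Hence $\rk(A)\, c_1\omega - r\, c_1(A)\omega > 0$ and $m^2$ is bounded above by
\[
\frac{2\big(c(E)\,(c_1(A)\omega - \rk(A)\beta\omega) - c(A)\,(c_1\omega - r\beta\omega)\big)}{\omega^2\big(\rk(A)\, c_1\omega - r\, c_1(A)\omega\big)}.
\]
The denominator is a positive integer multiple of $\omega^2$, hence $\ge \omega^2$. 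By Lemma~\ref{En}(i) applied to $E$ one has $c_1\omega - r\beta\omega \ge 0$; if it were $0$ then $E$ has $Z_m$-phase $1$ for all $m$ and is automatically $(Z_m, \mathcal P_m)$-semistable, so $c_1\omega - r\beta\omega > 0$. Similarly $c_1(A)\omega - \rk(A)\beta\omega \ge 0$ by Lemma~\ref{En}(i) applied to $A$, and the equality case here would give $Z_m(A) \in \R_{<0}$, hence $Z_m$-phase $1$ for $A$, destabilizing $E$ for every $m$ -- impossible; so $c_1(A)\omega - \rk(A)\beta\omega > 0$ too. To bound $|\rk(A)|$, note that $A$ destabilizes $E$ at $m$ while $\phi(Z_{m'}(A)) \le \phi(Z_{m'}(E))$ for $m' \gg 0$, so by the intermediate value theorem $\phi(Z_{m_0}(A)) = \phi(Z_{m_0}(E))$ for some $m_0 \ge m \ge 1$; Lemma~\ref{bnd-rk} with $I = [1, +\infty)$ then gives $|\rk(A)| \le N$ with $N$ universal. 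Using $c_1(A)\omega \ge \rk(A)\beta\omega$, $c_1(A)\omega = c_1\omega - c_1(B)\omega \le c_1\omega - \rk(B)\beta\omega$ (Lemma~\ref{En}(i) for $A$ and $B$) and $|\rk(B)| = |r - \rk(A)| \le |r| + N$, the integer $c_1(A)\omega$ takes only finitely many values. Thus the term $c(E)\,(c_1(A)\omega - \rk(A)\beta\omega)$ is bounded, and it remains to bound $c(A)$ from below by a universal constant.

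This last bound is the main obstacle: $A$ need not be $\mu_\omega$-semistable, and by (\ref{cE}) a lower bound on $c(A)$ amounts to an upper bound on $\ch_2(A)$, which is not available from $(Z_m, \mathcal P_m)$-semistability of a single subobject alone. To obtain it, I follow the three-case division ($\rk E = 0$, $\rk E > 0$, $\rk E < 0$) and the sheaf-theoretic arguments of the proof of Lemma~\ref{unstable-bnd}, together with the description of $E$ in Lemma~\ref{4.2}. When $\rk E \ge 0$ the object $A$ is a sheaf lying in $\mathcal T_{(\omega, \beta\omega)}$ and sits in $0 \to \mathcal H^{-1}(B) \to A \to \mathcal G \to 0$ with $\mathcal G$ a subsheaf of $\mathcal H^0(E) = E$ and $\mathcal H^{-1}(B) \in \mathcal F_{(\omega, \beta\omega)}$; when $\rk E < 0$ one argues dually using $\mathcal H^{-1}(A) \subseteq \mathcal H^{-1}(E)$ and $\mathcal H^0(A)$ a $0$-dimensional torsion sheaf. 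In each case one replaces $A$ (or $B$) by a minimal-slope, respectively maximal-slope, $\mu_\omega$-semistable HN-factor of $\mathcal H^0(E)$ or $\mathcal H^{-1}(E)$ via Lemma~\ref{quotient}, exactly as in Cases~1--3 of Lemma~\ref{unstable-bnd}, and checks -- using the comparisons of $\mu_\omega$-slopes among HN-factors and the slope constraints on $\mathcal H^0(E)$ and $\mathcal H^{-1}(E)$ from Lemma~\ref{4.2} -- that the replacement still destabilizes $E$ at the given finite $m$. Because the replacement is $\mu_\omega$-semistable with $\mu_\omega$-slope confined to a finite range, the Bogomolov estimate from the proof of Lemma~\ref{compare-bnd}(ii) bounds its $c$-invariant below by a universal constant; substituting this into the displayed inequality bounds $m^2$, and hence $m$, by a universal constant $M$. (Alternatively, once $\rk(A) > 0$, the Bogomolov--Gieseker inequality for $(Z_m, \mathcal P_m)$-semistable objects on a surface bounds $\ch_2(A)$, hence $c(A)$, directly; the cases $\rk(A) \le 0$ are then reduced to $\mu_\omega$-semistable sheaves as above.) This step is where the argument extends the proof of Proposition~4.1 in \cite{Bay} from the limit $m \gg 0$ to the explicit threshold $M$. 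Combined with Lemma~\ref{unstable-bnd}, this establishes the equivalence stated in Theorem~\ref{mggM}.
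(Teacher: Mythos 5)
Your setup is sound and agrees with the paper up to the point where everything hinges on a universal lower bound for $c(A)$: reducing to $E \in \mathcal A^\sharp_{(\omega,\beta\omega)}$, expressing destabilization at a finite $m$ as a quadratic inequality via (\ref{EB2}), deducing $\rk(A)\,c_1\omega - r\,c_1(A)\omega > 0$, invoking Lemma~\ref{bnd-rk} on $I=[1,+\infty)$ (after the intermediate-value observation) to get $|\rk(A)| \le N$, and bounding $c_1(A)\omega$ by Lemma~\ref{En}(i) --- all of this is essentially what the paper does. You also correctly identify the remaining obstacle: a universal lower bound for $c(A)$, i.e.\ a universal upper bound for $\ch_2(A)$.

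However, your proposed way of obtaining that bound has a genuine gap. You want to replace $A$ (or $B$) by a minimal-/maximal-slope HN factor of $\mathcal H^0(E)$ or $\mathcal H^{-1}(E)$ ``exactly as in Cases 1--3 of Lemma~\ref{unstable-bnd}'' and claim the replacement still destabilizes $E$ at the given finite $m$. This cannot work here: by Lemma~\ref{4.2}, for a $(Z_\Omega,\mathcal P_\Omega)$-semistable $E$ the relevant cohomology sheaf is already $\mu_\omega$-semistable ($E$ itself in case (ii), $\mathcal H^{-1}(E)$ in case (iii)), so the proposed extremal HN factor is just that sheaf and yields no proper destabilizer; moreover, destabilization of a semistable $E$ at finite $m$ is governed by the constant terms $c(\cdot)$ in (\ref{EB2}), not by slopes, so a slope-extremal replacement need not destabilize at $m$ even when $A$ does. (In Lemma~\ref{unstable-bnd} the replacement trick works precisely because $E$ is unstable in the large-volume limit, where the slope term dominates.) The paper instead keeps the actual maximal destabilizer $A_w$: in Cases 1--2 it runs the HN filtration of $A_w$ itself, bounding the ranks and slopes of all its factors universally (using $|\rk(A_w)|\le N$, that every factor has slope $>\beta\omega$, and the universal bound on $\mu_\omega(A_w)$) and applying Lemma~\ref{compare-bnd}(ii) factor by factor, with a separate argument bounding $c({\rm Tor}(A_w))$ from below via the $(Z_\Omega,\mathcal P_\Omega)$-semistability of $E$ and (\ref{EB2}); in Case 3 it works with the quotient $B_w$ and the HN factors $\mathcal B_i$ of $\mathcal H^{-1}(B_w)$, using Lemma~\ref{GCQ} to show the induced map $\mathcal H^{-1}(E)\to \mathcal B_s$ is nonzero and hence to sandwich $\mu_\omega(\mathcal B_i)$ between $\mu_\omega(E)$ and $\beta\omega$ before applying Lemma~\ref{compare-bnd}(ii). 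None of this is supplied by your sketch. Your parenthetical alternative --- a Bogomolov--Gieseker-type inequality for $(Z_m,\mathcal P_m)$-semistable objects --- would indeed bound $\ch_2(A)$ when $\rk(A)>0$, but it is an external result neither proved nor cited in this paper, so as written it does not close the gap either.
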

\begin{proof}
It suffices to prove the statement for $E \in \mathcal P_\Omega((0, 1])
= \mathcal A^\sharp_{(\omega, \beta \omega)}$.
We begin with an observation. Consider the set
\begin{eqnarray}   \label{BEb}
W = \{ w \in [1,+\infty) |\,  E \text{ is $(Z_w, \mathcal P_w)$-unstable}\}.
\end{eqnarray}
If $W$ is empty, then we are done by taking $M = 1$. Assume that $W$ is nonempty.
By Lemma~\ref{4.1}, $E$ is $(Z_m, \mathcal P_m)$-semistable for $m \gg 0$.
So for every $w \in W$, we can find a maximal destablizing sub-object $A_w \in
\mathcal A^\sharp_{(\omega, \beta \omega)}$ of $E$
with respect to $(Z_w, \mathcal P_w)$, satisfying the properties listed
in Lemma~\ref{bnd-rk}. By Lemma~\ref{bnd-rk}, there exists a universal constant $N$
(depending only on $\mathfrak t(E)$, $\beta$ and $\omega$) such that
\begin{eqnarray}   \label{rkAbN}
|\rk(A_w)| \le N.
\end{eqnarray}
We need to show that $W$ has a universal upper bound. To show this,
it suffices to prove that, given any exact sequence in $\As$:
\begin{eqnarray}   \label{eqn-AEB}
0 \to A_w \to E \to B_w \to 0
\end{eqnarray}
where $E$ is $(Z_w, \mathcal P_w)$-unstable for some $w \in W$ and $A_w$ is the maximal
destablizing sub-object with respect to $(Z_w, \mathcal P_w)$, we can find a constant $M>0$
depending only on $\mathfrak t(E), \omega$ and $\beta$ such that
$\phi \big (Z_\Omega(A_w)(m) \big ) \le \phi \big (Z_\Omega(E)(m) \big )$ whenever $m>M$, i.e.,
\begin{eqnarray}   \label{setup100}
&   &{\omega^2 m^2 \over 2} \big ( \rk(E) \, c_1(A_w) \omega -
         \rk(A_w) \, c_1(E) \omega \big )   \nonumber  \\
&\le&c(A_w) \, \big ( c_1(E) \omega - \rk(E) \, \beta \omega \big )
    - c(E) \, \big ( c_1(A_w) \omega - \rk(A_w) \, \beta \omega \big ).
\end{eqnarray}
whenever $m \ge M$, in view of the discussions in Remark~\ref{compare-phase}.
So fix such an exact sequence \eqref{eqn-AEB}.
Note that $E$ satisfies Lemma~\ref{4.2}~(i), (ii) or (iii). In the following,
our proof is divided into three cases accordingly.

\medskip
{\bf Case 1:} $E$ satisfies Lemma~\ref{4.2}~(i).
If $E$ is a $0$-dimensional torsion sheaf, then it is
$(Z_m, \mathcal P_m)$-semistable for all $m > 0$, contradicting to the nonemptiness of $W$.
So $E$ must be a $1$-dimensional torsion sheaf, and (\ref{setup100}) is simplified to
\begin{eqnarray}   \label{semistable-bnd.1}
{\omega^2 m^2 \over 2} \big ( -\rk(A_w) \, c_1(E) \omega \big )
\le c(A_w) \, c_1(E) \omega - c(E) \, \big ( c_1(A_w) \omega - \rk(A_w) \, \beta \omega \big )
\end{eqnarray}
Note from the long exact sequence of cohomology of (\ref{eqn-AEB}) that $A_w$ is a sheaf and
\begin{eqnarray}   \label{semistable-bnd.2}
0 \to \mathcal H^{-1}(B_w) \to A_w \to E \to \mathcal H^0(B_w) \to 0
\end{eqnarray}
is an exact sequence of sheaves.
Since (\ref{semistable-bnd.1}) holds for $m \gg 0$ but does not hold for $m = w$,
$\rk(A_w) > 0$. By \eqref{rkAbN}, $0 < \rk(A_w) \le N$.
Since $\rk \big (\mathcal H^{-1}(B_w) \big ) = \rk(A_w)$,
$0 < \rk \big (\mathcal H^{-1}(B_w) \big ) \le N$.
By the definition of $\mathcal A^\sharp_{(\omega, \beta \omega)}$,
we have $\mu_\omega (A_w) > \beta \omega$. So
\begin{eqnarray}   \label{semistable-bnd.3}
c_1(A_w) \omega > \rk(A_w) \cdot \beta \omega \ge -N \cdot |\beta \omega|.
\end{eqnarray}
Similarly, we have $\mu_\omega \big (\mathcal H^{-1}(B_w) \big ) \leq \beta \omega$.
It follows that
$$
c_1\big (\mathcal H^{-1}(B_w) \big ) \omega
\le \rk\big (\mathcal H^{-1}(B_w) \big ) \cdot \beta \omega \le N \cdot |\beta \omega|.
$$
Note that $\mathcal H^0(B_w)$ is a torsion sheaf. Thus $c_1\big (\mathcal H^0(B_w) \big ) \ge 0$ and
\begin{eqnarray}   \label{semistable-bnd.4}
c_1(B_w) \omega
= c_1\big (\mathcal H^0(B_w) \big ) \omega - c_1\big (\mathcal H^{-1}(B_w) \big ) \omega
\ge -c_1\big (\mathcal H^{-1}(B_w) \big ) \omega \ge -N \cdot |\beta \omega|.
\end{eqnarray}
Since $c_1(A_w) = c_1(E) - c_1(B_w)$, we see from \eqref{semistable-bnd.3} and
\eqref{semistable-bnd.4} that
\begin{eqnarray*}
-N \cdot |\beta \omega| \le c_1(A_w) \omega \le c_1(E) \omega + N \cdot |\beta \omega|.
\end{eqnarray*}
So $\rk(A_w)$, $|c_1(A_w) \omega|$ and $|\mu_\omega (A_w)|$ are bounded from above
by universal constants.

Consider the usual HN-filtration of the sheaf $A_w$ with respect to $\mu_\omega$:
$$
{\rm Tor}({A_w}) = A_0 \subset A_1 \subset \cdots \subset A_n = A_w
$$
where $n \le \rk(A_w) \le N$. Then $\mu_\omega (A_w) \geq \mu_\omega (A_n/A_{n-1})$.
By the definition of $\As$, we have $\mu_\omega (A_n/A_{n-1}) > \beta \omega$.
Hence $\rk(A_{n-1})$, $|c_1(A_{n-1}) \omega|$, $|\mu_\omega (A_{n-1})|$ and
$|\mu_\omega (A_n/A_{n-1})|$ are bounded from above by universal constants.
Similarly, using $A_{n-1}$ instead of $A_n = A_w$, we see that
$\rk(A_{n-2})$, $|c_1(A_{n-2}) \omega|$, $|\mu_\omega (A_{n-2})|$ and
$|\mu_\omega (A_{n-1}/A_{n-2})|$ are bounded from above by universal constants.
Repeating this process, we conclude that
$\rk(A_i), |c_1(A_i) \omega|, |\mu_\omega (A_i)|$ and $|\mu_\omega (A_i/A_{i-1})|$,
with $1 \le i \le n$, are all bounded from above by a universal constant.
Applying Lemma~\ref{compare-bnd}~(ii)
to the torsion free $\mu_\omega$-semistable sheaves $A_i/A_{i-1}$, we see that all the
numbers $c(A_i/A_{i-1})$, $1 \le i \le n$, are bounded from below by a universal constant.
Suppose ${\rm Tor}({A_w}) \neq 0$. To understand $c(A_0) = c({\rm Tor}({A_w}))$, note from
(\ref{semistable-bnd.2}) that ${\rm Tor}({A_w})$ does not contain any 0-dimensional subsheaf
because $E$ is $(Z_\Omega, \mathcal P_\Omega)$-semistable, $\rk(E) = 0$ and $c_1(E) > 0$.
So ${\rm Tor}({A_w})$ is a 1-dimensional torsion sheaf. Since $H^{-1}(B_w)$ is torsion free,
the subsheaf ${\rm Tor}({A_w})$ of $A_w$ is mapped injectively into $E$. Therefore,
$0 < c_1 \big ({\rm Tor}({A_w}) \big )\omega \le c_1(E) \omega$. Note that the sheaf injection
${\rm Tor}({A_w}) \hookrightarrow A_w$ is also an injection in $\As$. So ${\rm Tor}({A_w})$
is a sub-object of $E$ in $\As$.  By the $(Z_\Omega, \mathcal P_\Omega)$-semistability of $E$
and using (\ref{EB2}), we see that $c \big ({\rm Tor}({A_w}) \big )$ is bounded from
below by a universal constant. Overall, we have proved that
$$
c(A_0), \qquad c(A_i/A_{i-1}),
$$
with $1 \le i \le n$, are bounded from below by a universal constant.
Note from (\ref{cE}) that $c(A_w) = c(A_0) + \sum_{i=1}^n c(A_i/A_{i-1})$.
Since $n \le N$, $c(A_w)$ is bounded from below by a universal constant.
Hence $c(A_w)/\rk(A_w)$ is bounded from below by a universal constant.
By (\ref{semistable-bnd.1}), there exists a universal constant $M$
such that $\phi \big (Z_\Omega(A_w)(m) \big ) \le
\phi \big (Z_\Omega(E)(m) \big )$ whenever $m \ge M$.

\medskip
{\bf Case 2:} $E$ satisfies Lemma~\ref{4.2}~(ii). We see from the exact sequence (\ref{eqn-AEB})
that $A_w$ is a torsion free sheaf. So (\ref{setup100}) is equivalent to
\begin{eqnarray}   \label{semistable-bnd-(ii).1}
{\omega^2 m^2 \over 2} \big ( \mu_\omega(A_w)  - \mu_\omega(E) \big )
\le {c(A_w) \over \rk(A_w)} \, \big ( \mu_\omega(E)  -  \beta \omega \big )
    - {c(E) \over \rk(E)} \, \big ( \mu_\omega(A_w)  -  \beta \omega \big ).
\end{eqnarray}
Since (\ref{semistable-bnd-(ii).1}) holds for $m \gg 0$ but does not hold for $m = w$,
we must have $\mu_\omega(A_w) < \mu_\omega(E)$.
Since $A_w \in \As$, we get $\mu_\omega(A_w) > \beta \omega$.
Thus, $\beta \omega < \mu_\omega(A_w) < \mu_\omega(E)$. By (\ref{rkAbN}),
$\rk(A_w)$ is bounded from above by a universal number. Hence the negative rational number
$\big ( \mu_\omega(A_w)  - \mu_\omega(E) \big )$ has a universal negative upper bound.
Next, consider the HN-filtration of the torsion free sheaf $A_w$ with respect to $\mu_\omega$:
$$
0 = A_0 \subset A_1 \subset \cdots \subset A_n = A_w.
$$
Since $\rk(A_w)$, $|c_1(A_w) \omega|$ and $|\mu_\omega (A_w)|$ are bounded from above
by universal constants, the same argument as in the previous paragraph proves that
${c(A_w)/\rk(A_w)}$ is bounded from below by a universal constant.
By (\ref{semistable-bnd-(ii).1}), there exists a universal constant $M$
such that $\phi \big (Z_\Omega(A_w)(m) \big ) \le
\phi \big (Z_\Omega(E)(m) \big )$ whenever $m \ge M$.

\medskip
{\bf Case 3:} $E$ satisfies Lemma~\ref{4.2}~(iii). Note that (\ref{setup100}) is equivalent to
\begin{eqnarray}   \label{semistable-bnd-iii.2}
& &{\omega^2 m^2 \over 2} \big ( \rk(E) \, c_1(B_w) \omega -
    \rk(B_w) \, c_1(E) \omega \big )   \nonumber  \\
&\ge&c(B_w) \, \big ( c_1(E) \omega - \rk(E) \, \beta \omega \big )
    - c(E) \, \big ( c_1(B_w) \omega - \rk(B_w) \, \beta \omega \big ).
\end{eqnarray}
Since $\mathcal H^0(E)$ is a $0$-dimensional torsion sheaf, so is $\mathcal H^0(B_w)$.
Put $\mathcal B = \mathcal H^{-1}(B_w)$.
Since (\ref{semistable-bnd-iii.2}) holds for $m \gg 0$ but does not hold for $m = w$,
$B_w$ can not be a $0$-dimensional torsion sheaf. In particular, $B_w \ne \mathcal H^0(B_w)$.
So $\mathcal B \ne 0$. Note that $\mathcal B$ is torsion free.
Now the inequality (\ref{semistable-bnd-iii.2}) is equivalent to
\begin{eqnarray}     \label{semistable-bnd-iii.3}
  {\omega^2 m^2 \over 2} \big ( \mu_\omega(\mathcal B) -
    \mu_\omega(E) \big )
\ge {c(B_w) \over \rk(\mathcal B)} \, \big ( \beta \omega -
    \mu_\omega(E) \big ) - {c(E) \over \rk(E)} \,
    \big ( \mu_\omega(\mathcal B) - \beta \omega \big ).
\end{eqnarray}
Since (\ref{semistable-bnd-iii.3}) holds for $m \gg 0$ but does not hold for $m = w$,
$\mu_\omega(\mathcal B) > \mu_\omega(E)$.
Since $E, B_w \in \As$, we have $\mu_\omega(E),
\mu_\omega(B_w) \le \beta \omega$ by Lemma~\ref{En}~(i). Thus,
\begin{eqnarray}     \label{semistable-bnd-iii.4}
\beta \omega \ge \mu_\omega(\mathcal B) > \mu_\omega(E).
\end{eqnarray}
By (\ref{rkAbN}), $|\rk(A_w)|$ is bounded from above by a universal constant. So
$$
\rk(\mathcal B) = |\rk(B_w)| \le |\rk(E)| + |\rk(A_w)|
$$
is bounded from above by a universal constant. Thus the positive rational number
$\big ( \mu_\omega(\mathcal B) - \mu_\omega(E) \big )$ has a universal positive lower bound.
In view of (\ref{semistable-bnd-iii.3}), to prove our lemma,
it remains to show that there exists a universal constant $\W N$ such that
\begin{eqnarray}   \label{semistable-bnd-(iii).5}
{c(B_w) \over \rk(\mathcal B)} \le \W N.
\end{eqnarray}

Since $\mathcal H^0(B_w)$ is a $0$-dimensional torsion sheaf, we have
\begin{eqnarray*}
   c(B_w)
= -c(\mathcal B) - \ch_2(\mathcal H^0(B_w)) \le -c(\mathcal B)
= -\sum_{i=1}^s c(\mathcal B_i)
\end{eqnarray*}
where $\mathcal B_1, \ldots, \mathcal B_s$ are the usual
HN-filtration quotients of $\mathcal B$ with respect to $\mu_\omega$
satisfying $\mu_\omega(\mathcal B_1) > \ldots >
\mu_\omega(\mathcal B_s)$. To prove (\ref{semistable-bnd-(iii).5}),
it suffices to show that each $c(\mathcal B_i)/\rk(\mathcal B_i)$
is bounded from below by a universal constant.

Finally, we analyze $\mathcal B_i$. Since $\mathcal B =
\mathcal H^{-1}(B_w) \in \mathcal F_{(\omega, \beta \omega)}$,
we see from the definition of $\mathcal F_{(\omega, \beta \omega)}$
that $\mathcal B_i \in \mathcal F_{(\omega, \beta \omega)}$ and
$\mu_\omega(\mathcal B_i) \le \beta \omega$. Let $\mathcal E = \mathcal H^{-1}(E)$.
Let $\mathcal F$ (respectively, $\mathcal G$) be the image (respectively, cokernel)
of the map $\mathcal E \to \mathcal B$ induced from (\ref{eqn-AEB}).
Combining the map $\mathcal E \to \mathcal B$ with the surjection
$\mathcal B \to \mathcal B_s$, we obtain a map $f: \mathcal E \to \mathcal B_s$.
Let $\W{\mathcal F}$ be the image of $f$. If $\W{\mathcal F} = 0$,
then we get an induced surjection $\mathcal G \cong
\mathcal B/\mathcal F \to \mathcal B_s$.
Since $\Hom(\mathcal T_{(\omega, \beta \omega)},
\mathcal F_{(\omega, \beta \omega)}) = 0$, this is impossible by
Lemma~\ref{GCQ} (note that there exists an exact sequence of sheaves
$
0 \to \mathcal G \to \mathcal H^0(A_w) \to Q \to 0
$
where $Q$ is a subsheaf of the $0$-dimensional torsion sheaf
$\mathcal H^0(E)$). Thus, $\W{\mathcal F} \ne 0$.
Since $\mathcal E$ and $\mathcal B_s$ are $\mu_\omega$-semistable,
$\mu_\omega(\mathcal E) \le \mu_\omega(\W{\mathcal F})
\le \mu_\omega(\mathcal B_s)$. Therefore, we obtain
$\mu_\omega(E) = \mu_\omega(\mathcal E)
\le \mu_\omega(\mathcal B_i) \le \beta \omega$
for every $i = 1, \ldots, s$. By Lemma~\ref{compare-bnd}~(ii),
each $c(\mathcal B_i)/\rk(\mathcal B_i)$ is bounded from below by a universal constant.
\end{proof}

\begin{theorem} \label{mggM}
Let notations be from Subsect.~\ref{subsect_Large}, and let
$\omega \in \text{\rm Num}(X)_\Q$. Fix a type $\mathfrak t =
(r, c_1, c_2)$. Then there exists a positive number $M$,
depending only on $\mathfrak t, \omega$ and $\beta$, such that
$\overline{\mathfrak M}_{{\bf u}_m}(\mathfrak t) =
\overline{\mathfrak M}_\Omega(\mathfrak t)$ for all $m \ge M$.
\end{theorem}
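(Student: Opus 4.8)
The plan is to deduce Theorem~\ref{mggM} from Lemmas~\ref{unstable-bnd} and \ref{semistable-bnd}, after promoting them to a statement about Jordan--Holder factors so that $S$-equivalence is also controlled. First I would record the trivial observation that every object representing a class in $\overline{\mathfrak M}_{{\bf u}_m}(\mathfrak t)$ or in $\overline{\mathfrak M}_\Omega(\mathfrak t)$ lies in $\mathcal A^\sharp_{(\omega,\beta\omega)}=\mathcal P_\Omega((0,1])$ (Lemma~\ref{4.2}) and has numerical type exactly $\mathfrak t$. Hence applying Lemma~\ref{unstable-bnd} and Lemma~\ref{semistable-bnd} with $\mathfrak t(E)=\mathfrak t$ \emph{fixed} yields a single constant $M_0$, depending only on $\mathfrak t,\omega,\beta$, such that for all $m\ge M_0$ an object $E\in\mathcal A^\sharp_{(\omega,\beta\omega)}$ with $\mathfrak t(E)=\mathfrak t$ is $(Z_m,\mathcal P_m)$-semistable if and only if it is $(Z_\Omega,\mathcal P_\Omega)$-semistable. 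This identifies the two sets of semistable objects for $m\ge M_0$.

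The remaining task is to show that the two $S$-equivalence relations agree on this common set of semistable objects, i.e.\ that $\text{\rm Gr}(E)$ computed with respect to $(Z_m,\mathcal P_m)$ agrees with $\text{\rm Gr}(E)$ computed with respect to $(Z_\Omega,\mathcal P_\Omega)$. For this I would first bound the numerical types of the Jordan--Holder factors. If $0=E_0\subset\cdots\subset E_k=E$ is the $(Z_\Omega,\mathcal P_\Omega)$-Jordan--Holder filtration of an $\Omega$-semistable $E$ of type $\mathfrak t$, all factors $A_i=E_i/E_{i-1}$ have the same $(Z_\Omega,\mathcal P_\Omega)$-phase. Reading the leading terms of $Z_\Omega$ off \eqref{ZEm}--\eqref{cE}, equality of these phases forces all $\rk(A_i)$ to have the same sign as $r$ (or all to vanish), hence to lie in a finite set, and forces $c_1(A_i)\cdot\omega$ and $c(A_i)$ to be determined by $\rk(A_i)$ together with $\mathfrak t,\beta,\omega$ (the leading order gives $\mu_\omega(A_i)=c_1\omega/r$, the next order gives $c(A_i)=\rk(A_i)\,c(\mathfrak t,\beta)/r$). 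A Hodge Index Theorem and Bogomolov Inequality argument, exactly of the kind used in Lemma~\ref{compare-bnd} and \eqref{wall-chambers.2}, then bounds $c_1(A_i)^2$ and confines each $c_1(A_i)$, and therefore $c_2(A_i)$, to a finite set depending only on $\mathfrak t,\beta,\omega$. The cases $r<0$ and $r=0$ are handled the same way, using $\mathcal H^{-1}(A_i)$ when $r<0$ and Lemma~\ref{00n} or Lemma~\ref{0c1c2} when $r=0$.

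Next I would upgrade Lemmas~\ref{unstable-bnd}--\ref{semistable-bnd} to a statement about \emph{stable} objects: if $F$ is $(Z_\Omega,\mathcal P_\Omega)$-stable with $\mathfrak t(F)$ in the finite set just produced, then $F$ is $(Z_m,\mathcal P_m)$-stable for $m$ larger than a constant depending only on $\mathfrak t,\beta,\omega$, and conversely. Semistability transfers by the previous lemmas; if $F$ failed to be $(Z_m,\mathcal P_m)$-stable for large $m$, pick a $(Z_m,\mathcal P_m)$-Jordan--Holder subobject $G\subsetneq F$, note that $\mathfrak t(G)$ and $\mathfrak t(F/G)$ again range over finite sets, transfer their semistability back to $(Z_\Omega,\mathcal P_\Omega)$, and obtain a contradiction from $\phi(Z_\Omega(G)(m))<\phi(Z_\Omega(F)(m))$ for $m\gg0$ (strict, since $F$ is $\Omega$-stable) versus $\phi(Z_m(G))=\phi(Z_m(F))$, using that a strict inequality of polynomial phases between objects of bounded numerical type becomes $\phi(Z_m(G))<\phi(Z_m(F))$ for all uniformly large $m$. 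The converse direction is symmetric, invoking Lemma~\ref{4.1} and the fact that equality of polynomial phases is equality of $\phi(Z_m(\,\cdot\,))$ for all large $m$.

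Putting these together: for $m\ge M$ (the maximum of the finitely many universal constants above), the $(Z_\Omega,\mathcal P_\Omega)$-Jordan--Holder filtration of any $\Omega$-semistable $E$ of type $\mathfrak t$ is a filtration by $(Z_m,\mathcal P_m)$-stable objects all of the same $(Z_m,\mathcal P_m)$-phase, hence is a $(Z_m,\mathcal P_m)$-Jordan--Holder filtration of $E$; therefore $\text{\rm Gr}(E)$ is the same for both stabilities, the two $S$-equivalences coincide, and $\overline{\mathfrak M}_{{\bf u}_m}(\mathfrak t)=\overline{\mathfrak M}_\Omega(\mathfrak t)$. I expect the main obstacle to be the finiteness/uniformity step: verifying that the constants furnished by Lemmas~\ref{unstable-bnd} and \ref{semistable-bnd} genuinely depend only on the $Z_\Omega$-relevant data of the object (so that they may be taken uniform over the finitely many Jordan--Holder factor types), and carrying out the Bogomolov/Hodge-index bound on $c_1$ of the factors in the $r\le 0$ cases.
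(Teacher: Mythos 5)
Your first paragraph is precisely the paper's own proof: Theorem~\ref{mggM} is deduced there immediately from Lemma~\ref{unstable-bnd} and Lemma~\ref{semistable-bnd}, whose constants depend only on $\mathfrak t(E),\omega,\beta$, so that fixing $\mathfrak t(E)=\mathfrak t$ yields a single $M$ for which the $(Z_m,\mathcal P_m)$-semistable and $(Z_\Omega,\mathcal P_\Omega)$-semistable objects of type $\mathfrak t$ coincide for all $m\ge M$. The paper stops at that point; it does not attempt to match Jordan--Holder filtrations or $S$-equivalence classes, and in its subsequent uses (the proof of Theorem~\ref{thm-loc-fnt}~(ii) and Corollary~\ref{m01n}) the identity is only invoked at the level of which objects are semistable.

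Your remaining paragraphs, aimed at showing the two $S$-equivalence relations agree, go beyond the paper, and as written they contain a gap. The analysis of the $(Z_\Omega,\mathcal P_\Omega)$-Jordan--Holder factors is sound: equality of polynomial phases is the identity $\mathrm{Im}\bigl(\overline{Z_\Omega(E)(m)}\,Z_\Omega(A_i)(m)\bigr)\equiv 0$, and comparing the $m^3$ and $m^1$ coefficients does pin down $c_1(A_i)\omega$ and $c(A_i)$ in terms of $\rk(A_i)$, so a finite list of factor types is attainable after the Bogomolov/Hodge-index step. The problem is the ``upgrade to stable objects'': the assertion that the types of a $(Z_m,\mathcal P_m)$-Jordan--Holder subobject $G\subsetneq F$ ``again range over finite sets'' does not follow by the same mechanism, because at a single value of $m$ the condition $\phi\bigl(Z_m(G)\bigr)=\phi\bigl(Z_m(F)\bigr)$ is one real equation and determines neither $c_1(G)\omega$ nor $c(G)$ from $\rk(G)$, and $\rk(G)$ is not bounded a priori. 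Nor can Lemma~\ref{bnd-rk} be quoted verbatim, since its hypotheses require $G$ to destabilize strictly at some $m_2$ in the interval, whereas here $F$ stays $(Z_{m'},\mathcal P_{m'})$-semistable for every $m'\ge M$, so no strict destabilization is available. To complete your stronger statement you would need a separate uniform boundedness result for equal-phase stable subobjects (for instance a variant of Lemma~\ref{bnd-rk} in which the strict inequality \eqref{bnd-rk.112} is replaced by equality with $m_0=m_2$, which the compactness proof appears to tolerate), or else read the theorem, as the paper does, as an equality of the underlying sets of semistable objects of type $\mathfrak t$.
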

\begin{proof}
Follows immediately from Lemma~\ref{unstable-bnd} and
Lemma~\ref{semistable-bnd}.
\end{proof}

\begin{theorem}       \label{thm-loc-fnt}
Let $\beta, \omega \in \text{\rm Num}(X)_\Q$ with $\omega$ being ample,
and let $\mathfrak t = (r, c_1, c_2)$.
\begin{enumerate}
\item[(i)] The set of mini-walls of type $(\mathfrak t, \beta, \omega)$
in $(0, +\infty)$ is locally finite.

\item[(ii)] There exists a positive number $\W M$, depending only on
$\mathfrak t, \omega$ and $\beta$, such that there is no
mini-wall of type $(\mathfrak t, \beta, \omega)$ in $[\W M, +\infty)$.
\end{enumerate}
\end{theorem}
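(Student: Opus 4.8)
The plan is to read off (ii) almost directly from Section~4, and to obtain (i) by reducing local finiteness to a uniform bound on the rank of a destabilizing subobject, which one then extracts from Lemma~\ref{bnd-rk} and a short counting argument.

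For (ii): let $\W M$ be the larger of the two universal constants (for the fixed numerical type $\mathfrak t$) produced by Lemma~\ref{unstable-bnd} and Lemma~\ref{semistable-bnd}; equivalently take $\W M = M$ as in Theorem~\ref{mggM}. Then for every $m \ge \W M$ and every $E \in \As$ with $\mathfrak t(E) = \mathfrak t$, the object $E$ is $(Z_m,\mathcal P_m)$-semistable if and only if it is $(Z_\Omega,\mathcal P_\Omega)$-semistable, so that $\overline{\mathfrak M}_{{\bf u}_m}(\mathfrak t) = \overline{\mathfrak M}_\Omega(\mathfrak t)$ for all $m \ge \W M$. If $m_0 \in [\W M,+\infty)$ were a mini-wall of type $(\mathfrak t,\beta,\omega)$ in $[\W M,+\infty)$, Definition~\ref{mini-walls} would supply $E$ with $\mathfrak t(E)=\mathfrak t$ together with $m_1,m_2 \in [\W M,+\infty)$ such that $E \in \overline{\mathfrak M}_{{\bf u}_{m_1}}(\mathfrak t)$ but $E \notin \overline{\mathfrak M}_{{\bf u}_{m_2}}(\mathfrak t)$; by the previous sentence $E$ would then be simultaneously $(Z_\Omega,\mathcal P_\Omega)$-semistable and not, which is absurd. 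Hence there is no mini-wall in $[\W M,+\infty)$, proving (ii).

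For (i): a compact $K \subset (0,+\infty)$ lies in some $I = [a,b]$ with $0<a<b$, so it suffices to bound the number of mini-walls of type $(\mathfrak t,\beta,\omega)$ in $(0,+\infty)$ lying in $[a,b]$. Fix such a mini-wall $m_0 \in [a,b]$, with data $E \in \As$, $\mathfrak t(E)=\mathfrak t$, $m_1,m_2 \in (0,+\infty)$, and $A$ the leading HN-filtration component of $E$ with respect to $(Z_{m_2},\mathcal P_{m_2})$. Put $\mathfrak n = \rk(E)\,c_1(A)\omega - \rk(A)\,c_1(E)\omega$ and let $d = d(\mathfrak t,\beta,\omega,A)$ be as in (\ref{dtboA}). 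Since $A$ destabilizes $E$ at $m_2$, has equal phase with $E$ at $m_0$, and does not destabilize $E$ at $m_1$, the criterion (\ref{EB2}) gives $\tfrac{\omega^2 m_2^2}{2}\mathfrak n > d$, $\tfrac{\omega^2 m_0^2}{2}\mathfrak n = d$ and $\tfrac{\omega^2 m_1^2}{2}\mathfrak n \le d$; in particular $\mathfrak n \ne 0$ and $m_0^2 = 2d/(\omega^2\mathfrak n)$. We may assume $\beta,\omega \in \text{Num}(X)$, so $\mathfrak n,2d \in \Z$, and since $m_0 \le b$ we get $|2d| = m_0^2\omega^2|\mathfrak n| \le b^2\omega^2|\mathfrak n|$. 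Thus, once a bound $|\mathfrak n| \le N'$ depending only on $\mathfrak t,\beta,\omega,a$ is available, there remain only finitely many admissible pairs $(\mathfrak n,2d)$, hence finitely many values $m_0^2 = 2d/(\omega^2\mathfrak n)$, hence finitely many mini-walls in $[a,b]$, which is (i). Moreover $A, E/A \in \As$ together with Lemma~\ref{En}(i) give $\rk(A)\beta\omega \le c_1(A)\omega \le \rk(A)\beta\omega + \bigl(c_1(E)\omega - \rk(E)\beta\omega\bigr)$, so $|c_1(A)\omega|$, and therefore $|\mathfrak n|$, is universally bounded as soon as $|\rk(A)|$ is. It remains to bound $|\rk(A)|$.

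The sign of $\mathfrak n$ dictates the argument, and the second case is the main obstacle. If $\mathfrak n > 0$, the three inequalities force $m_1 \le m_0 < m_2$; in particular $m_2 > m_0 \ge a$, so $m_2,m_0 \in [a,+\infty)$, and Lemma~\ref{bnd-rk} (with $I = [a,+\infty)$, applied to the subobject $A$, which is $(Z_{m_2},\mathcal P_{m_2})$-semistable, destabilizes $E$ there, and has equal phase with $E$ at $m_0$) yields $|\rk(A)| \le N$ with $N$ depending only on $\mathfrak t,\beta,\omega,a$. If $\mathfrak n < 0$, the inequalities instead force $m_2 < m_0 \le m_1$ with $m_0,m_1 \ge a$, and now $m_2$ may a priori be arbitrarily close to $0$, so Lemma~\ref{bnd-rk} does not apply verbatim: this is the crux. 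To handle it one exploits that the function $m \mapsto \tfrac{\omega^2 m^2}{2}\mathfrak n - d$ keeps a constant positive sign on all of $(0,m_0)$, so $A$ destabilizes $E$ at every $m \in (0,m_0)$, and hence $E$ is $(Z_m,\mathcal P_m)$-unstable throughout $(0,m_0)$; when $m_0 > a$ (the single value $m_0 = a$ contributing at most one mini-wall) one works at an auxiliary parameter in $[a,m_0)$, which is now bounded below by $a$. The goal is then to re-run the sheaf-theoretic estimates in the proof of Lemma~\ref{bnd-rk}, Case~1 — using $\lim\mu_\omega(A)=\beta\omega$ from Lemma~\ref{En}(ii), the Bogomolov inequality for the minimal-slope Harder–Narasimhan quotient of $\mathcal H^0(A)/\mathrm{Tor}$, the $(Z_{m_2},\mathcal P_{m_2})$-semistability of $A$, and the bound $m_0 \ge a > 0$ in place of $m_2 \ge a$ — to conclude that $|\rk(A)|$ cannot tend to infinity, giving the required universal bound (equivalently, to show that every $(0,+\infty)$-type mini-wall in $[a,b]$ already admits a witness whose destabilizing parameter is $\ge a$, and hence is among the finitely many mini-walls controlled by Lemma~\ref{bnd-rk}). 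With $|\rk(A)| \le N$ in all cases we obtain $|\mathfrak n| \le N'$, and the counting argument above finishes the proof.
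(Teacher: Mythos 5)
Your proof of part (ii) is the paper's proof: take $\W M$ from Theorem~\ref{mggM} (the paper uses $\W M = 1+M$), observe that a mini-wall in $[\W M,+\infty)$ would force $\overline{\mathfrak M}_{{\bf u}_{m_1}}(\mathfrak t) \ne \overline{\mathfrak M}_{{\bf u}_{m_2}}(\mathfrak t)$ for some $m_1, m_2 \ge \W M$, contradicting $\overline{\mathfrak M}_{{\bf u}_m}(\mathfrak t)=\overline{\mathfrak M}_\Omega(\mathfrak t)$ for all $m\ge \W M$. Likewise, the counting core of your part (i) — phase equality at $m_0$ and strict inequality at $m_2$ give $\mathfrak n\ne 0$ and $m_0^2 = 2d/(\omega^2\mathfrak n)$, then bound $|\rk(A)|$ via Lemma~\ref{bnd-rk}, hence $|c_1(A)\omega|$ via Lemma~\ref{En}~(i), hence $|\mathfrak n|$, hence finitely many admissible values of $m_0$ in $[a,b]$ — is exactly the proof of Proposition~\ref{prop-loc-fnt}, which is all the paper invokes for (i).

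The divergence, and the gap, lies in how you read Definition~\ref{mini-walls}. In the paper, a mini-wall ``in $I$'' requires the witnesses $m_1, m_2$ to lie in $I$ (cf.\ Remark~\ref{rmk:mini-walls}), and statement (i) is understood and proved in the sense of Proposition~\ref{prop-loc-fnt}: for every compact $I=[a,b]$ there are finitely many mini-walls in $I$; there $m_0, m_2 \ge a$ automatically, so Lemma~\ref{bnd-rk} with $[a,+\infty)$ applies directly and no case division on the sign of $\mathfrak n$ is needed. You instead allow $m_1, m_2$ anywhere in $(0,+\infty)$ while restricting only $m_0$ to $[a,b]$; this is a strictly stronger assertion, and exactly in the case $\mathfrak n<0$ (so $m_2<m_0$, possibly $m_2<a$) your argument stops at a declaration of intent: ``re-run the estimates of Lemma~\ref{bnd-rk}, Case 1'' is not carried out, and it is not routine — the contradictions closing that proof have the form $0 \le -\omega^2 m_2^2/2$ and genuinely use $m_2 \ge a$; they disappear if $m_{2,n}\to 0$. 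The alternative claim in your parenthesis (that every such mini-wall admits a witness whose destabilizing parameter is $\ge a$) is also left unproved. So: under the paper's reading you should delete the sign analysis, and your (i) is then complete and coincides with Proposition~\ref{prop-loc-fnt}; under your stronger reading, the $\mathfrak n<0$ case is a genuine gap in the write-up, and it is not supplied by anything in the paper either.
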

\begin{proof}
Part (i) is Proposition~\ref{prop-loc-fnt}. To prove (ii),
let ${\bf u} = e^{-(\beta + i \, \omega)}$ and $\W M = 1 + M$
where $M$ is the positive number from Theorem~\ref{mggM}.
If $m_0$ is a mini-wall of type $(\mathfrak t, \beta, \omega)$ in
$I = [\W M, +\infty)$, then by definition,
$\phi \big (Z_{m_0}(A) \big ) = \phi \big (Z_{m_0}(E) \big )$
where $E \in \overline{\mathfrak M}_{{\bf u}_{m_1}}(\mathfrak t)$
for some $m_1 \in I$,
$E \not \in \overline{\mathfrak M}_{{\bf u}_{m_2}}(\mathfrak t)$
for some $m_2 \in I$, and $A$ is the leading HN-filtration component
of $E$ with respect to $(Z_{m_2}, \mathcal P_{m_2})$.
In particular, $\overline{\mathfrak M}_{{\bf u}_{m_1}}(\mathfrak t)
\ne \overline{\mathfrak M}_{{\bf u}_{m_2}}(\mathfrak t)$. This
contradicts to Theorem~\ref{mggM} since $m_1, m_2 \ge \W M > M$.
\end{proof}

\begin{corollary}   \label{m01n}
Let $\beta, \omega \in \text{\rm Num}(X)_\Q$ with $\omega$ being ample.
Fix a numerical type $\mathfrak t = (r, c_1, c_2)$ and an interval
$I = [a, +\infty)$ with $a > 0$. Then there exists a finite subset
$\{ m_0^{(1)}, \ldots, m_0^{(n)} \} \subset I$, possibly empty,
such that $\overline{\mathfrak M}_{{\bf u}_{m_1}}(\mathfrak t)
= \overline{\mathfrak M}_{{\bf u}_{m_2}}(\mathfrak t)$ whenever
$m_1$ and $m_2$ are contained in the same connected component of
$I - \{ m_0^{(1)}, \ldots, m_0^{(n)} \}.$
\end{corollary}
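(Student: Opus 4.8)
The plan is to obtain this corollary by assembling the facts already in hand: Theorem~\ref{thm-loc-fnt}(ii) (a universal bound $\W M$ past which no mini-wall occurs), Proposition~\ref{prop-loc-fnt} (finiteness of mini-walls on a \emph{compact} interval), Lemma~\ref{chamber-moduli} (the moduli space is constant on a mini-chamber), and Theorem~\ref{mggM} (for $m\ge M$ the moduli space equals the fixed space $\overline{\mathfrak M}_\Omega(\mathfrak t)$). The underlying idea is that the half-line $I=[a,+\infty)$ splits at $\W M$ into a compact piece $[a,\W M]$, on which finitely many mini-walls cut out mini-chambers with constant moduli, together with a tail $[\W M,+\infty)$ on which the moduli space is outright constant; one then glues the two at the single point $\W M$.

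Concretely, I would first put $\W M=1+M$ with $M$ as in Theorem~\ref{mggM}, and record, from Theorem~\ref{thm-loc-fnt}(ii) and Theorem~\ref{mggM}, that there is no mini-wall of type $(\mathfrak t,\beta,\omega)$ in $[\W M,+\infty)$ and that $\overline{\mathfrak M}_{{\bf u}_m}(\mathfrak t)=\overline{\mathfrak M}_\Omega(\mathfrak t)$ for every $m\ge\W M$. If $a\ge\W M$ the corollary holds with the empty set, since then any $m_1,m_2\in I$ satisfy $m_1,m_2>M$. So I would assume $a<\W M$ and apply Proposition~\ref{prop-loc-fnt} to the compact interval $J=[a,\W M]$, obtaining the finite set $S=\{m_0^{(1)},\dots,m_0^{(n)}\}\subset J\subset I$ consisting of all mini-walls of type $(\mathfrak t,\beta,\omega)$ in $J$; this $S$ is the finite subset claimed in the statement.

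It then remains to check that $\overline{\mathfrak M}_{{\bf u}_{m_1}}(\mathfrak t)=\overline{\mathfrak M}_{{\bf u}_{m_2}}(\mathfrak t)$ for $m_1\le m_2$ lying in one connected component $C$ of $I-S$. Since $C$ is an interval disjoint from $S$, the closed interval $[m_1,m_2]$ misses $S$. If $m_1>\W M$ then $m_1,m_2>M$, and Theorem~\ref{mggM} gives the equality. Otherwise $m_1\le\W M$, and I would set $m_1'=\min(m_2,\W M)$: the interval $[m_1,m_1']$ lies in $J$ and misses $S$, hence $m_1$ and $m_1'$ lie in one mini-chamber of type $(\mathfrak t,\beta,\omega)$ in $J$, so $\overline{\mathfrak M}_{{\bf u}_{m_1}}(\mathfrak t)=\overline{\mathfrak M}_{{\bf u}_{m_1'}}(\mathfrak t)$ by Lemma~\ref{chamber-moduli}. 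If $m_2\le\W M$ this is already the desired equality; if $m_2>\W M$, then $m_1'=\W M$ and $\overline{\mathfrak M}_{{\bf u}_{\W M}}(\mathfrak t)=\overline{\mathfrak M}_\Omega(\mathfrak t)=\overline{\mathfrak M}_{{\bf u}_{m_2}}(\mathfrak t)$ by Theorem~\ref{mggM} since $\W M,m_2>M$, which closes the chain.

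I do not expect a genuine obstacle here: the corollary is essentially bookkeeping on top of the main theorems. The only point requiring care is that Proposition~\ref{prop-loc-fnt} is available only over compact intervals — as the paper itself remarks, over $[a,+\infty)$ one cannot bound $|2\,d(\mathfrak t,\beta,\omega,A)|$ from above — and this is precisely what Theorem~\ref{thm-loc-fnt}(ii) repairs, by letting us discard the tail $[\W M,+\infty)$ on which the moduli space is constant and then reattach it at $\W M$.
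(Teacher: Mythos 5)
Your proof is correct and takes essentially the same route as the paper: split $I$ at $\W M = 1+M$, apply Proposition~\ref{prop-loc-fnt} to the compact piece $[a,\W M]$ and Theorem~\ref{mggM} (equivalently Theorem~\ref{thm-loc-fnt}~(ii)) to the tail, with Lemma~\ref{chamber-moduli} giving constancy on mini-chambers. The only cosmetic difference is that the paper adds the point $\W M$ itself to the finite cut set so no component straddles it, whereas you omit it and instead glue across $\W M$ via the chain through $m_1'=\min(m_2,\W M)$; both versions are valid.
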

\begin{proof}
Let $\W M$ be from Theorem~\ref{thm-loc-fnt}~(ii). If $\W M \le a$,
then the result is true by Theorem~\ref{thm-loc-fnt}~(ii) and
Lemma~\ref{chamber-moduli}. If $\W M > a$, then let
$\{ m_0^{(1)}, \ldots, m_0^{(n-1)} \}$ be the finite set of
mini-walls of type $(\mathfrak t, \beta, \omega)$ in $[a, \W M]$.
Letting $m_0^{(n)} = \W M$, we are done.
\end{proof}

\section{\bf Identify $\overline{\mathfrak M}_\Omega(\mathfrak t)$
with Gieseker and Uhlenbeck moduli spaces}
\label{sect_Gieseker}

Fix a numerical type $\mathfrak t = (r, c_1, c_2)$. We want to
compare the spaces $\overline{\mathfrak M}_\Omega(\mathfrak t)$
with the Gieseker/Simpson and Uhlenbeck spaces where $\Omega$
comes from Subsect.~\ref{subsect_Large}.
In view of Lemma~\ref{00n} and Lemma~\ref{0c1c2}, we will assume
that $r \ne 0$. The results here are similar to
those in Sect.~5 of \cite{LQ} which only considered objects $E \in
\mathcal A^p$ for those stability data $\Omega = (\omega, \rho, p, U)$
such that $\rho = (\rho_0, \rho_1, \rho_2)$ satisfies $\phi(\rho_0)
\ne \phi(-\rho_2)$. In the present case, we have $\phi(\rho_0)
= \phi(-\rho_2)$ since $\rho_0 = -1$ and $\rho_2 = 1/2$. Moreover,
we will study objects $E \in \mathcal P_\Omega((0, 1]) =
\mathcal A^\sharp_{(\omega, \beta\omega)}$ instead of objects
$E \in \mathcal A^p$ by noticing that the abelian categories
$\mathcal A^\sharp_{(\omega, \beta\omega)}$ and $\mathcal A^p$ are different.

\begin{lemma} \label{>}
Let $\Omega = (\omega, \rho, p, U)$ be from
Subsect.~\ref{subsect_Large}.
Fix a numerical type $\mathfrak t = (r, c_1, c_2)$ with $r > 0$
and $c_1 \omega/r > \beta \omega$.
Assume that $\omega$ lies in a chamber of type $\mathfrak t$.
Then, every object in $\overline{\mathfrak M}_\Omega(\mathfrak t)$
is $(Z_\Omega, \mathcal P_\Omega)$-stable, and
$\overline{\mathfrak M}_\Omega(\mathfrak t)
= \overline{\mathfrak M}_{\omega}(\mathfrak t)$.
\end{lemma}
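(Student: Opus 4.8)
\emph{Plan.} The strategy is to show that, under the chamber hypothesis, the $(Z_\Omega,\mathcal P_\Omega)$-semistable objects of type $\mathfrak t$, the $(Z_\Omega,\mathcal P_\Omega)$-stable objects of type $\mathfrak t$, and the torsion free $\mu_\omega$-stable sheaves $E$ with $\mathfrak t(E)=\mathfrak t$ are one and the same class; both assertions of the lemma then follow at once. That last class equals $\overline{\mathfrak M}_\omega(\mathfrak t)$: by the discussion following Definition~\ref{wall-chambers}, a Simpson-semistable sheaf of type $\mathfrak t$ is $\mu_\omega$-semistable, hence $\mu_\omega$-stable because $\omega$ lies in a chamber of type $\mathfrak t$, while conversely a $\mu_\omega$-stable sheaf is Simpson-stable (compare reduced Hilbert polynomials via Riemann--Roch). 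Any such $E$ has $\mu_\omega(E)=c_1\omega/r>\beta\omega$, so $E\in\mathcal T_{(\omega,\beta\omega)}\subset\As=\mathcal P_\Omega((0,1])$. For the easy inclusion, if $E\in\overline{\mathfrak M}_\Omega(\mathfrak t)$ then $E\in\mathcal P_\Omega((0,1])$ is $(Z_\Omega,\mathcal P_\Omega)$-semistable and so is one of the three types in Lemma~\ref{4.2}; type~(i) is excluded since $\rk(E)=r>0$ and type~(iii) since there $\rk(E)=-\rk(\mathcal H^{-1}(E))\le 0$. Hence $E$ is a torsion free $\mu_\omega$-semistable, therefore $\mu_\omega$-stable, sheaf of type $\mathfrak t$, i.e. $E\in\overline{\mathfrak M}_\omega(\mathfrak t)$.

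\emph{Main step.} It remains to prove that every torsion free $\mu_\omega$-stable sheaf $E$ with $\mathfrak t(E)=\mathfrak t$ is $(Z_\Omega,\mathcal P_\Omega)$-stable. Let $A$ be a proper nonzero sub-object of $E$ in $\As$ and put $B=E/A$. Since $\mathcal H^{-1}(E)=0$, the long exact cohomology sequence shows that $A$ is a sheaf in $\mathcal T_{(\omega,\beta\omega)}$ sitting in an exact sequence of sheaves
\[
0\to\mathcal H^{-1}(B)\to A\to E\to\mathcal H^0(B)\to 0 ,
\]
with $\mathcal H^{-1}(B)\in\mathcal F_{(\omega,\beta\omega)}$ and $\mathcal H^0(B)\in\mathcal T_{(\omega,\beta\omega)}$. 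By Remark~\ref{compare-phase} it suffices to check that (\ref{EB2}), with $B$ there replaced by $A$, holds strictly for $m\gg 0$; its left-hand side is $\frac{\omega^2 m^2}{2}$ times the leading coefficient $r\,c_1(A)\omega-\rk(A)\,c_1\omega$. If $\mathcal H^{-1}(B)\ne 0$, then using $c_1(\mathcal H^{-1}(B))\omega\le\rk(\mathcal H^{-1}(B))\,\beta\omega$, the fact that $\mathrm{im}(A\to E)\subseteq E$ has $\mu_\omega$-slope $\le\mu_\omega(E)$, and $\mu_\omega(E)>\beta\omega$, a short computation yields $r\,c_1(A)\omega-\rk(A)\,c_1\omega\le\rk(\mathcal H^{-1}(B))\,r\,(\beta\omega-\mu_\omega(E))<0$. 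If $\mathcal H^{-1}(B)=0$, then $A\hookrightarrow E$ is a subsheaf with cokernel $\mathcal H^0(B)$: when $\mathcal H^0(B)$ is not $0$-dimensional one again gets leading coefficient $<0$ (from $\mu_\omega(A)<\mu_\omega(E)$, which holds by $\mu_\omega$-stability if $\rk(A)<r$ and because $c_1(\mathcal H^0(B))\omega>0$ if $\rk(A)=r$); when $\mathcal H^0(B)$ is $0$-dimensional of length $\ell\ge 1$ the leading coefficient vanishes, but then $\rk(A)=r$, $c_1(A)=c_1$, $c(A)=c(E)+\ell$ by (\ref{cE}), and the right-hand side of (\ref{EB2}) equals $\ell\,(c_1\omega-r\,\beta\omega)=\ell r\,(\mu_\omega(E)-\beta\omega)>0$, so (\ref{EB2}) holds for all $m>0$. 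In every case $\phi\big(Z_\Omega(A)(m)\big)<\phi\big(Z_\Omega(E)(m)\big)$ for $m\gg 0$, hence $E$ is $(Z_\Omega,\mathcal P_\Omega)$-stable and $E\in\overline{\mathfrak M}_\Omega(\mathfrak t)$.

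\emph{Conclusion and main difficulty.} Combining the two inclusions gives $\overline{\mathfrak M}_\Omega(\mathfrak t)=\overline{\mathfrak M}_\omega(\mathfrak t)$ (since everything in sight is stable, Jordan--Holder filtrations, hence $S$-equivalence classes, are trivial, so this is an honest identification of the underlying sets of objects), and every object of $\overline{\mathfrak M}_\Omega(\mathfrak t)$ is $(Z_\Omega,\mathcal P_\Omega)$-stable. I expect the main difficulty to be the sign analysis of the leading coefficient $r\,c_1(A)\omega-\rk(A)\,c_1\omega$ for an arbitrary sub-object $A$ of $E$ in the tilted heart: one has to organize the argument according to whether $\mathcal H^{-1}(B)$ vanishes, remember that $\mu_\omega$-stability of $E$ only constrains subsheaves of rank $<r$, and dispose of the borderline case of a $0$-dimensional cokernel through the separate numerical identity $c(A)=c(E)+\ell$.
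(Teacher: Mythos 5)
Your proof is correct and follows essentially the same route as the paper's: use Lemma~\ref{4.2} and the chamber hypothesis for the easy inclusion, then show a $\mu_\omega$-stable sheaf $E$ of type $\mathfrak t$ is $(Z_\Omega,\mathcal P_\Omega)$-stable by comparing phases of a sub-object $A$ via (\ref{EB2}), splitting according to whether $\mathcal H^{-1}(E/A)$ vanishes. You are in fact more careful than the paper at one point: for a full-rank subsheaf $A\subset E$ with $0$-dimensional quotient of length $\ell$ the paper simply asserts $\mu_\omega(A)<\mu_\omega(E)$, which fails in that borderline case, whereas your separate computation $c(A)=c(E)+\ell$ with right-hand side $\ell\,(c_1\omega-r\,\beta\omega)>0$ is exactly what is needed to close it.
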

\begin{proof}
Let $E \in \overline{\mathfrak M}_\Omega(\mathfrak t)$.
By Lemma~\ref{4.2}, $E$ is a $\mu_\omega$-semistable sheaf.
Since $\omega$ lies in a chamber of type $\mathfrak t$,
$E$ must be $\mu_\omega$-stable. In particular,
$E \in \overline{\mathfrak M}_{\omega}(\mathfrak t)$.

Conversely, let $E \in \overline{\mathfrak M}_{\omega}(\mathfrak t)$.
Then $E$ is $\mu_\omega$-stable since $\omega$ lies in a chamber of
type $\mathfrak t$. Since $\mu_\omega(E) = c_1 \omega/r >
\beta \omega$, $E \in \mathcal P_\Omega((0, 1]) =
\mathcal A^\sharp_{(\omega, \beta\omega)}$. Let $A$ be any proper
sub-object of $E$ in $\mathcal P_\Omega((0, 1])$, and let $B = E/A$.
Then we have the exact sequence $0 \to A \to E \to B \to 0$ in
$\mathcal P_\Omega((0, 1])$. So $A$ is a sheaf in
$\mathcal T_{(\omega, \beta \omega)}$ and sits in
$$
0 \to \mathcal H^{-1}(B) \to A \to E \to \mathcal H^0(B) \to 0.
$$
It follows that $A$ is torsion free with $\mu_\omega(A) >
\beta \omega$. If $\mathcal H^{-1}(B) \ne 0$, then
$\mathcal H^{-1}(B) \in \mathcal F_{(\omega, \beta \omega)}$.
So $\mu_\omega(\mathcal H^{-1}(B)) \le \beta \omega < \mu_\omega(A)$.
Thus the image $\mathcal G$ of $A \to E$ is not zero.
Since $E$ is $\mu_\omega$-stable, we conclude that
$\mu_\omega(A) < \mu_\omega(\mathcal G) \le \mu_\omega(E)$.
By (\ref{EB2}), $\phi \big (Z_\Omega(E)(m) \big ) >
\phi \big (Z_\Omega(A)(m) \big )$ for all $m \gg 0$.
If $\mathcal H^{-1}(B) = 0$, then we have an exact sequence
$0 \to A \to E \to B \to 0$ of sheaves. Since $A$ is a proper
subsheaf of $E$, $\mu_\omega(A) < \mu_\omega(E)$.
So again $\phi \big (Z_\Omega(E)(m) \big ) >
\phi \big (Z_\Omega(A)(m) \big )$ for all $m \gg 0$.
Therefore, $E$ is $(Z_\Omega, \mathcal P_\Omega)$-stable.
In particular, $E \in \overline{\mathfrak M}_\Omega(\mathfrak t)$.
\end{proof}

\begin{lemma} \label{<}
Let $\Omega = (\omega, \rho, p, U)$ be from
Subsect.~\ref{subsect_Large}.
Fix a numerical type $\mathfrak t = (r, c_1, c_2)$ with $r < 0$
and $c_1 \omega/r < \beta \omega$.
Assume that $\omega$ lies in a chamber of type $\mathfrak t$.
Let $\w {\mathfrak t} = (-r, c_1, c_1^2-c_2)$.
Then, every object in $\overline{\mathfrak M}_\Omega(\mathfrak t)$
is $(Z_\Omega, \mathcal P_\Omega)$-stable,
and $E \in \overline{\mathfrak M}_\Omega(\mathfrak t)$
if and only if $E = (\W E)^v[1]$ for some $\W E \in
\overline{\mathfrak M}_{\omega}(\w {\mathfrak t})$.
\end{lemma}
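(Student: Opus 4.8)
The plan is to identify $\overline{\mathfrak M}_\Omega(\mathfrak t)$ with $\overline{\mathfrak M}_{\omega}(\w{\mathfrak t})$ through the derived dual functor $(-)^v[1]$, running in parallel with the proof of Lemma~\ref{>} but on the dual side. First I would note that if $E\in\overline{\mathfrak M}_\Omega(\mathfrak t)$ then, since $r<0$ and $c_1\omega/r<\beta\omega$ together give $c_1\omega-r\beta\omega>0$ and hence $\phi(Z_\Omega(E)(m))<1$ for all $m>0$, Lemma~\ref{4.2} forces $E$ to have the shape in case~(iii): writing $\mathcal B=\mathcal H^{-1}(E)$ and $Q=\mathcal H^0(E)$, the sheaf $\mathcal B$ is torsion free $\mu_\omega$-semistable with $\mu_\omega(\mathcal B)\le\beta\omega$ and $Q$ is $0$-dimensional. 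Because the walls of type $\mathfrak t$, of type $\w{\mathfrak t}$, and of the numerical type of $\mathcal B$ all coincide (cf. the remark after Definition~\ref{wall-chambers}), the hypothesis that $\omega$ lies in a chamber makes $\mathcal B$ in fact $\mu_\omega$-stable.

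The key structural point I would establish next is that $(Z_\Omega,\mathcal P_\Omega)$-semistability forces $\mathcal B$ to be locally free and the class in $\Ext^2(Q,\mathcal B)$ of the triangle $\mathcal B[1]\to E\to Q\to\mathcal B[2]$ to be non-degenerate (no nonzero subsheaf of $Q$ maps to zero in $\mathcal B[2]$). If $\mathcal B$ were not reflexive, put $T=\mathcal B^{**}/\mathcal B\ne0$; rotating $0\to\mathcal B\to\mathcal B^{**}\to T\to0$ produces a distinguished triangle $T\to\mathcal B[1]\to\mathcal B^{**}[1]\to T[1]$ all of whose terms lie in $\As$ (here $T\in\mathcal T_{(\omega,\beta\omega)}$ and $\mathcal B,\mathcal B^{**}\in\mathcal F_{(\omega,\beta\omega)}$), hence a short exact sequence $0\to T\to\mathcal B[1]\to\mathcal B^{**}[1]\to0$ in $\As$; composing with $\mathcal B[1]\hookrightarrow E$ exhibits $T$ as a sub-object of $E$ of phase $1>\phi(E)$, a contradiction. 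For non-degeneracy: if $0\ne Q_0\subseteq Q$ has vanishing image in $\mathcal B[2]$, then $Q_0\hookrightarrow Q$ lifts to a map $Q_0\to E$ whose cone has cohomology sheaves $\mathcal B$ and $Q/Q_0$, so the cone lies in $\As$ and $Q_0$ becomes a sub-object of $E$ of phase $1$, again a contradiction.

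Now set $\W E:=E^v[1]$. I would compute its cohomology sheaves from the spectral sequence $\mathcal Ext^p(\mathcal H^{-q}(E),\mathcal O_X)\Rightarrow\mathcal H^{p+q}(E^v)$: since $\mathcal B$ is locally free ($\mathcal Ext^{>0}(\mathcal B,\mathcal O_X)=0$) and $Q$ is $0$-dimensional ($\mathcal Ext^{<2}(Q,\mathcal O_X)=0$), only $\mathcal B^*$ (contributing to $\mathcal H^1(E^v)$) and $\mathcal Ext^2(Q,\mathcal O_X)$ (contributing to $\mathcal H^2(E^v)$) survive, linked by a differential $d\colon\mathcal B^*\to\mathcal Ext^2(Q,\mathcal O_X)$ which one checks is dual to the extension class; the non-degeneracy just proved makes $d$ surjective, so $\mathcal H^2(E^v)=0$ and $\W E=\ker(d)$ is a torsion free sheaf in $0\to\W E\to\mathcal B^*\to\mathcal Ext^2(Q,\mathcal O_X)\to0$. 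From $\ch(E^v)=\ch(E)^{\vee}$ one reads off $\mathfrak t(\W E)=\w{\mathfrak t}$, and the involutivity of $(-)^v$ gives $E=(\W E)^v[1]$. Moreover $\W E^{**}=\mathcal B^*$, which is $\mu_\omega$-stable (the dual of the $\mu_\omega$-stable locally free $\mathcal B$), and $\W E\hookrightarrow\W E^{**}$ has $0$-dimensional cokernel, so $\W E$ is $\mu_\omega$-stable, i.e. $\W E\in\overline{\mathfrak M}_{\omega}(\w{\mathfrak t})$. Conversely, given $\W E\in\overline{\mathfrak M}_{\omega}(\w{\mathfrak t})$, the chamber hypothesis makes $\W E$ $\mu_\omega$-stable, and $E:=(\W E)^v[1]$ has $\mathcal H^{-1}(E)=\W E^{*}$ — locally free, $\mu_\omega$-stable, of slope $-\mu_\omega(\W E)=c_1\omega/r<\beta\omega$, hence in $\mathcal F_{(\omega,\beta\omega)}$ — and $\mathcal H^0(E)=\mathcal Ext^1(\W E,\mathcal O_X)$, which is $0$-dimensional, so $E\in\As$ with $\mathfrak t(E)=\mathfrak t$.

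It then remains to check that, under this correspondence, $E$ is $(Z_\Omega,\mathcal P_\Omega)$-stable if and only if $\W E$ is $\mu_\omega$-stable, which combined with the above yields the lemma; \textbf{this stability translation is where I expect the real work to be}. For a proper sub-object $A\subset E$ in $\As$ with quotient $B$, one runs the long exact cohomology sequence: either $\mathcal H^{-1}(A)=0$, so $A$ is a $0$-dimensional torsion sheaf inside $\mathcal H^0(E)$, ruled out exactly as in the non-degeneracy argument above; or $\mathcal H^{-1}(A)\ne0$ embeds into the $\mu_\omega$-stable sheaf $\W E^{*}=\mathcal H^{-1}(E)$, and together with the control on the $\mathcal H^0$ terms coming from $\mathcal Ext^1(\W E,\mathcal O_X)$ one obtains slope inequalities which, fed into (\ref{EB2}) and taken in the limit $m\to+\infty$, give $\phi(Z_\Omega(E)(m))>\phi(Z_\Omega(A)(m))$ for $m\gg0$, as in the proofs of Lemma~\ref{>} and Lemma~\ref{0c1c2}. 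The delicate feature, absent from \cite{LQ}, is that $\phi(\rho_0)=\phi(-\rho_2)$, so the $0$-dimensional torsion part $\mathcal H^0$ always sits at phase $1$ and must be isolated by hand at each step; this, together with the spectral-sequence bookkeeping in Step~3, is the bulk of the technical load.
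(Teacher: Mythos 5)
Your forward direction is essentially sound and is in fact more self-contained than the paper's: where you run the dualizing spectral sequence, identify the differential $\mathcal B^*\to\mathcal Ext^2(Q,\mathcal O_X)$ with the extension class, and invoke local (Matlis) duality to convert non-degeneracy into surjectivity, the paper simply quotes Lemma~3.4 of \cite{ABL} to get $E=(\W E)^v[1]$ with $\W E$ torsion free; your local-freeness argument for $\mathcal H^{-1}(E)$ and the chamber bookkeeping for stability of $\mathcal H^{-1}(E)$ and of $\W E$ agree with the paper. The genuine gap is in the converse direction, i.e.\ exactly in the ``stability translation'' that you flag as the real work but then only sketch, and sketch incorrectly in part.

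Three things break down there. First, your case $\mathcal H^{-1}(A)=0$ is misanalyzed: such an $A$ is a sheaf in $\mathcal T_{(\omega,\beta\omega)}$, but it is \emph{not} forced to be a $0$-dimensional torsion sheaf inside $\mathcal H^0(E)$ --- the map $A\to\mathcal H^0(E)$ has kernel equal to the image of $\mathcal H^{-1}(B)$, which can have positive rank, so $A$ may be a $1$-dimensional torsion sheaf or a torsion free sheaf with $\mu_\omega(A)>\beta\omega$; these cases are not ``ruled out as in the non-degeneracy argument'' but are handled in the paper by feeding $\rk(E)=r<0$ and $\mu_\omega(A)>\beta\omega>\mu_\omega(E)$ (resp.\ $c_1(A)\omega>0$) into (\ref{EB2}). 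Second, for genuinely $0$-dimensional sub-objects your appeal to ``the non-degeneracy argument above'' is circular in this direction: that argument \emph{assumed} $E$ is $(Z_\Omega,\mathcal P_\Omega)$-semistable, which is what you are now proving; one needs instead the biduality computation $\Hom(\mathcal O_x,(\W E)^v[1])\cong\Ext^{-1}(\W E,\mathcal O_x)=0$, which is the paper's (\ref{contradiction}) (or the unproved converse of your lifting/surjectivity equivalence, using $\mathcal H^2(E^v)=\mathcal H^1(\W E)=0$). Third, and most seriously, the case $\mathcal H^{-1}(A)\ne0$, $\mathcal H^{-1}(B)\ne0$ is not carried out: the embedding $\mathcal H^{-1}(A)\hookrightarrow(\W E)^*$ does not yield the needed inequality, since $\mu_\omega(A)$ and $\rk(A)$ also involve $\mathcal H^0(A)$ and $\rk(A)$ can have either sign, so stability of $(\W E)^*$ pushes the slope comparison the wrong way on the sub-object side. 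The paper argues on the quotient side: with $\mathcal F,\mathcal G$ the image and cokernel of $(\W E)^*\to\mathcal H^{-1}(B)$, the case $\mathcal F=0$ is excluded because then $\mathcal H^{-1}(B)\hookrightarrow\mathcal H^0(A)$ with $0$-dimensional quotient, contradicting $\mathcal H^0(A)\in\mathcal T_{(\omega,\beta\omega)}$ and $\mu_\omega(\mathcal H^{-1}(B))\le\beta\omega$; then $\mu_\omega$-stability of $(\W E)^*$ plus the dichotomy $\rk\mathcal G=0$ or $\rk\mathcal G>0$ gives the strict inequality $\mu_\omega(\mathcal H^{-1}(B))>\mu_\omega((\W E)^*)$, hence $\mu_\omega(B)>\mu_\omega(E)$ and $\phi(Z_\Omega(E)(m))<\phi(Z_\Omega(B)(m))$ for $m\gg0$. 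Until these three pieces are supplied, the stability of $(\W E)^v[1]$ --- and with it the assertion that every object of $\overline{\mathfrak M}_\Omega(\mathfrak t)$ is $(Z_\Omega,\mathcal P_\Omega)$-stable --- is not established.
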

\begin{proof}
Let $E \in \overline{\mathfrak M}_\Omega(\mathfrak t)$.
Then, $(c_1(E) \cdot \omega - \rk(E) \, \beta \omega)
= c_1 \omega - r \beta \omega > 0$. By (\ref{ZEm}),
$\phi \big (Z_\Omega(E)(m) \big ) < 1$ for all $m > 0$.
So $E$ does not have any sub-objects in $\mathcal P_\Omega((0, 1])$ which are
$0$-dimensional torsion sheaves. By Lemma~\ref{4.2},
$\mathcal H^{-1}(E)$ is a torsion free $\mu_\omega$-stable sheaf
and $\mathcal H^0(E)$ is a $0$-dimensional torsion sheaf.
Note that $\mathcal H^{-1}(E)$ must be locally free (otherwise,
the $0$-dimensional torsion sheaf
$\big (\mathcal H^{-1}(E) \big)^{**}/\mathcal H^{-1}(E)$ would
be a sub-object of $E$ in $\mathcal P_\Omega((0, 1])$).
By the Lemma~3.4 in \cite{ABL}, $E = (\W E)^v[1]$ for some torsion
free sheaf $\W E$. A direct computation shows that
$\mathfrak t(\W E) = \w {\mathfrak t}$. Since $(\W E)^* =
\mathcal H^{-1}(E)$ is $\mu_\omega$-stable, so is $\W E$.
In particular, $\W E \in
\overline{\mathfrak M}_{\omega}(\w {\mathfrak t})$.

Conversely, let $E = (\W E)^v[1]$ for some $\W E \in
\overline{\mathfrak M}_{\omega}(\w {\mathfrak t})$.
Then $\mathcal H^0(E) = \mathcal Ext^1(\W E, \mathcal O_X)$ is a $0$-dimensional
torsion sheaf, and $\mathcal H^{-1}(E) = (\W E)^*$ is locally free
and $\mu_\omega$-stable with $\mu_\omega \big ( (\W E)^*\big )
= (-c_1) \omega/(-r) < \beta \omega$.
So $E \in \mathcal P_\Omega((0, 1])$. Let $A$ be any proper
sub-object of $E$ in $\mathcal P_\Omega((0, 1])$, and let $B = E/A$.
Then we have the exact sequence $0 \to A \to E \to B \to 0$
in $\mathcal P_\Omega((0, 1])$ and an exact sequence of sheaves
\begin{eqnarray}   \label{<.1}
0 \to \mathcal H^{-1}(A) \to (\W E)^* \to \mathcal H^{-1}(B) \to
\mathcal H^0(A) \to \mathcal H^0(E) \to \mathcal H^0(B) \to 0.
\end{eqnarray}
Let $\mathcal F$ and $\mathcal G$ be the image and cokernel of
$(\W E)^* \to \mathcal H^{-1}(B)$ respectively.

We claim that $A$ does not have any sub-object $Q$ in
$\mathcal P_\Omega((0, 1])$ which is a $0$-dimensional torsion sheaf.
Indeed, if such a $Q$ exists, then $Q$ is a sub-object of
$E = (\W E)^v[1]$ in $\mathcal P_\Omega((0, 1])$. In particular,
there exists a point $x \in X$ such that $\mathcal O_x$ is a sub-object of
$E = (\W E)^v[1]$ in $\mathcal P_\Omega((0, 1])$.
This leads to a contradiction:
\begin{eqnarray}     \label{contradiction}
     0
&\ne&\Hom_{\mathcal P_\Omega((0, 1])}(\mathcal O_x, (\W E)^v[1])
     \cong \Hom_{\mathcal D^b(X)}(\W E[-1], \mathcal O_x^v)   \nonumber   \\
&=&\Hom_{\mathcal D^b(X)}(\W E[-1], \mathcal O_x[-2])
     \cong \Ext_{{\rm Coh}(X)}^{-1}(\W E, \mathcal O_x) = 0
\end{eqnarray}
where we have used the fact that $\mathcal O_x^v$, the derived dual of $\mathcal O_x$,
is equal to $\mathcal O_x[-2]$.

If $\mathcal H^{-1}(A) = 0$, then $A$ is a sheaf in
$\mathcal T_{(\omega, \beta \omega)}$. If $A$ is a $1$-dimensional
torsion sheaf, then we see from (\ref{EB2}) that
$\phi \big (Z_\Omega(E)(m) \big ) >
\phi \big (Z_\Omega(A)(m) \big )$ for all $m \gg 0$. Assume that
$A$ is not a $1$-dimensional torsion sheaf. By the preceding
paragraph, $A$ can not be a $0$-dimensional torsion sheaf.
So $\rk(A) > 0$ and $\mu_\omega(A) > \beta \omega > \mu_\omega
\big ( (\W E)^*\big ) = \mu_\omega(E)$.
By (\ref{EB2}), $\phi \big (Z_\Omega(E)(m) \big ) >
\phi \big (Z_\Omega(A)(m) \big )$ for all $m \gg 0$.

If $\mathcal B := \mathcal H^{-1}(B) = 0$, then $B$ is
a $0$-dimensional torsion sheaf and
$\phi \big (Z_\Omega(B)(m) \big ) = 1$ for all $m > 0$.
By (\ref{ZEm}), $\phi \big (Z_\Omega(E)(m) \big ) <
\phi \big (Z_\Omega(B)(m) \big )$ for all $m > 0$.

In the following, assume that $\mathcal H^{-1}(A) \ne 0$ and
$\mathcal B \ne 0$. Then $\mathcal B \in
\mathcal F_{(\omega, \beta \omega)}$ is torsion free with
$\mu_\omega(\mathcal B) \le \beta \omega$.
If $\mathcal F = 0$, then $\beta \omega \ge \mu_\omega(\mathcal B)
= \mu_\omega \big ( \mathcal H^0(A) \big )$ since
$\mathcal H^0(A)/\mathcal B$ is a subsheaf of the $0$-dimensional
torsion sheaf $\mathcal H^0(E)$. This contradicts to
$\mathcal H^0(A) \in \mathcal T_{(\omega, \beta \omega)}$.
Assume that $\mathcal F \ne 0$. Then $\mathcal F$ is a proper
quotient of $(\W E)^*$. Since $(\W E)^*$ is $\mu_\omega$-stable,
$\mu_\omega\big ( (\W E)^*\big ) < \mu_\omega(\mathcal F)$.
If $\rk(\mathcal G) = 0$, then we see from the exact sequence
$0 \to \mathcal F \to \mathcal B \to \mathcal G \to 0$ that
$\mu_\omega(\mathcal B) \ge \mu_\omega(\mathcal F)
> \mu_\omega\big ( (\W E)^*\big )$; if $\rk(\mathcal G) > 0$,
then $\mu_\omega(\mathcal G) =
\mu_\omega\big ( \mathcal H^0(A) \big ) > \beta \omega$ since
$\mathcal H^0(A)/\mathcal G$ is a subsheaf of $\mathcal H^0(E)$.
Since $\mu_\omega(\mathcal B) \le \beta \omega$, we have
$\mu_\omega(\mathcal G) > \mu_\omega(\mathcal B) >
\mu_\omega(\mathcal F) > \mu_\omega\big ( (\W E)^*\big )$.
In either case, $\mu_\omega(\mathcal B) > \mu_\omega\big (
(\W E)^*\big )$. Hence $\mu_\omega(B) > \mu_\omega(E)$.
By (\ref{EB2}), $\phi \big (Z_\Omega(E)(m) \big )
< \phi \big (Z_\Omega(B)(m) \big )$ for all $m \gg 0$.
This proves that $E$ is $(Z_\Omega, \mathcal P_\Omega)$-stable.
\end{proof}

\begin{lemma} \label{=}
Let $\Omega = (\omega, \rho, p, U)$ be from
Subsect.~\ref{subsect_Large}.
Fix a numerical type $\mathfrak t = (r, c_1, c_2)$ with $r < 0$
and $c_1 \omega/r = \beta \omega$.
Let $\omega$ lie in a chamber of type $\mathfrak t$.
\begin{enumerate}
\item[(i)] If $\W E \in {\mathfrak M}_{\omega}(-r, c_1,
c_1^2-(c_2+i))$ for some $i \ge 0$ and $Q$ is a length-$i$
$0$-dimensional torsion sheaf, then $(\W E)^*[1] \oplus Q \in
\overline{\mathfrak M}_\Omega(\mathfrak t)$.

\item[(ii)] If $E \in \overline{\mathfrak M}_\Omega(\mathfrak t)$,
then $E$ is $S$-equivalent to $(\W E)^*[1] \oplus Q$ where
$Q$ is a length-$i$ $0$-dimensional torsion sheaf and
$\W E \in {\mathfrak M}_{\omega}(-r, c_1, c_1^2-(c_2+i))$.
\end{enumerate}
\end{lemma}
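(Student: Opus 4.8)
The plan is to reduce both parts to the single claim $(\star)$: \emph{if $G$ is a locally free $\mu_\omega$-stable sheaf with $\mu_\omega(G)=\beta\omega$, then $G[1]$ is $(Z_\Omega,\mathcal P_\Omega)$-stable.} The hypothesis $c_1\omega/r=\beta\omega$ enters through the observation that any $E\in\mathcal A^\sharp_{(\omega,\beta\omega)}$ with $\mathfrak t(E)=\mathfrak t$ satisfies $c_1(E)\omega-\rk(E)\,\beta\omega=c_1\omega-r\,\beta\omega=0$, so by (\ref{ZEm}) the polynomial $Z_\Omega(E)(m)$ is real with leading coefficient $r\omega^2/2<0$, whence $\phi\big(Z_\Omega(E)(m)\big)\equiv 1$ for $m\gg 0$; applying Lemma~\ref{En}~(i) to any sub-object $A\subset E$ and to $E/A$ forces $c_1(A)\omega-\rk(A)\,\beta\omega=0$ as well. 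Taking $A=\mathcal H^{-1}(E)[1]$ and $E/A=\mathcal H^0(E)$, and then inspecting the generators of $\mathcal F_{(\omega,\beta\omega)}$ and $\mathcal T_{(\omega,\beta\omega)}$ factor by factor, one finds that $\mathcal H^{-1}(E)$ is necessarily torsion free $\mu_\omega$-semistable of slope $\beta\omega$ and $\mathcal H^0(E)$ necessarily $0$-dimensional torsion (and, incidentally, that every such $E$ is $(Z_\Omega,\mathcal P_\Omega)$-semistable).

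To prove $(\star)$: $G\in\mathcal F_{(\omega,\beta\omega)}$ since it is $\mu_\omega$-stable of slope $\le\beta\omega$, and $\mathcal H^0(G[1])=0$, so $G[1]\in\mathcal A^\sharp_{(\omega,\beta\omega)}$; moreover $c_1(G[1])\omega-\rk(G[1])\,\beta\omega=0$, so $\phi\big(Z_\Omega(G[1])(m)\big)\equiv 1$ and, again by Lemma~\ref{En}~(i), every sub-object $A\subset G[1]$ has $c_1(A)\omega-\rk(A)\,\beta\omega=0$. I would show $G[1]$ has no proper nonzero sub-object in $\mathcal A^\sharp_{(\omega,\beta\omega)}$, so that it is $(Z_\Omega,\mathcal P_\Omega)$-stable. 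Given such $A$ with quotient $B$, the long exact cohomology sequence of $0\to A\to G[1]\to B\to 0$ gives $\mathcal H^0(B)=0$ and exhibits $\mathcal H^{-1}(A)$ as a subsheaf of $G$. If $\mathcal H^{-1}(A)=G$, then $\mathcal H^{-1}(B)\cong\mathcal H^0(A)$ lies in $\mathcal T_{(\omega,\beta\omega)}\cap\mathcal F_{(\omega,\beta\omega)}$, hence vanishes, and $A=G[1]$. If $\mathcal H^{-1}(A)=0$, then $A=\mathcal H^0(A)$ is a sheaf in $\mathcal T_{(\omega,\beta\omega)}$ with $c_1(A)\omega=\rk(A)\,\beta\omega$; inspecting the generators of $\mathcal T_{(\omega,\beta\omega)}$ forces $A$ to be $0$-dimensional torsion, and a nonzero such $A$ would yield, via its socle, a nonzero map $\mathcal O_x\to G[1]$ in $\mathcal D^b(X)$, contradicting $\Hom_{\mathcal D^b(X)}(\mathcal O_x,G[1])=\Ext^1_{{\rm Coh}(X)}(\mathcal O_x,G)=0$ ($G$ being locally free; compare (\ref{contradiction})). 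Finally, if $\mathcal H^{-1}(A)$ is a proper nonzero subsheaf of the $\mu_\omega$-stable $G$, then $\mu_\omega(\mathcal H^{-1}(A))<\beta\omega$, and from
\[
c_1(A)\omega-\rk(A)\,\beta\omega
=\big(c_1(\mathcal H^0(A))\omega-\rk(\mathcal H^0(A))\,\beta\omega\big)
-\big(c_1(\mathcal H^{-1}(A))\omega-\rk(\mathcal H^{-1}(A))\,\beta\omega\big)
\]
together with Lemma~\ref{En}~(i) applied to $\mathcal H^0(A)$ and to $\mathcal H^{-1}(A)[1]$ (first bracket $\ge 0$, second $<0$) we get $c_1(A)\omega-\rk(A)\,\beta\omega>0$, contradicting the vanishing above.

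For (i): a Chern-character computation gives $\mathfrak t\big((\W E)^*[1]\oplus Q\big)=\mathfrak t$, the length-$i$ sheaf $Q$ absorbing the shift of $c_2$. Since $\W E$ is locally free $\mu_\omega$-stable with $\mu_\omega(\W E)=c_1\omega/(-r)=-\beta\omega$, its dual $G:=(\W E)^*$ is locally free $\mu_\omega$-stable of slope $\beta\omega$, so $(\W E)^*[1]=G[1]$ is $(Z_\Omega,\mathcal P_\Omega)$-stable of phase $1$ by $(\star)$; each skyscraper summand of $Q$ is $(Z_\Omega,\mathcal P_\Omega)$-stable of phase $1$ by \cite{Bay}; and a finite direct sum of $(Z_\Omega,\mathcal P_\Omega)$-semistable objects of a common phase is semistable. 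Hence $(\W E)^*[1]\oplus Q\in\overline{\mathfrak M}_\Omega(\mathfrak t)$. For (ii): let $E\in\overline{\mathfrak M}_\Omega(\mathfrak t)$; by the first paragraph $\mathcal H^{-1}(E)$ is torsion free $\mu_\omega$-semistable of slope $\beta\omega$ and $\mathcal H^0(E)$ is $0$-dimensional torsion. Put $G:=\mathcal H^{-1}(E)^{**}$ and $i:=\text{\rm length}(\mathcal H^0(E))+\text{\rm length}\big(G/\mathcal H^{-1}(E)\big)$; then $G$ is locally free, $\mu_\omega$-semistable of slope $\beta\omega$, with $\mathfrak t(G)=(-r,-c_1,c_1^2-(c_2+i))$, whose $c_2$-coordinate is $\le c_1^2-c_2$. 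Since $\omega$ lies in a chamber of type $\mathfrak t$ --- equivalently of the dual type $(-r,-c_1,c_1^2-c_2)$, by the remark after Definition~\ref{wall-chambers} --- the statement recalled there upgrades $\mu_\omega$-semistability of $G$ to $\mu_\omega$-stability. Rotating the sheaf sequences $0\to\mathcal H^{-1}(E)\to G\to G/\mathcal H^{-1}(E)\to 0$ and $0\to\mathcal H^{-1}(E)[1]\to E\to\mathcal H^0(E)\to 0$ into exact sequences of $\mathcal A^\sharp_{(\omega,\beta\omega)}$ gives a filtration of $E$ with successive quotients $G/\mathcal H^{-1}(E)$, $G[1]$, $\mathcal H^0(E)$; refining the two $0$-dimensional torsion terms into skyscrapers and using the stability of $G[1]$ from $(\star)$, we get $\text{\rm Gr}(E)\cong G[1]\oplus Q$ for some length-$i$ $0$-dimensional torsion sheaf $Q$. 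As $\W E:=G^*$ lies in $\mathfrak M_\omega(-r,c_1,c_1^2-(c_2+i))$ and $(\W E)^*=G$, this shows $E$ is $S$-equivalent to $(\W E)^*[1]\oplus Q$.

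The main obstacle is $(\star)$, and within it the case where $\mathcal H^{-1}(A)$ is a proper nonzero subsheaf of $G$: this is exactly where $\mu_\omega$-\emph{stability} of $G$ (rather than mere semistability) is indispensable, since it is what makes $\mu_\omega(\mathcal H^{-1}(A))<\beta\omega$ strict. A secondary point is the Chern-class bookkeeping in (ii), needed to see that $\mathfrak t(G)$ has $c_2$-coordinate bounded above by that of the dual type of $\mathfrak t$, so that the chamber hypothesis applies to $G$.
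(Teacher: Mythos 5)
Your proposal is correct and follows essentially the same route as the paper: in (i) the numerical check plus semistability of a direct sum of phase-one semistable pieces, and in (ii) the decomposition of $E$ through $0 \to \mathcal H^{-1}(E)[1] \to E \to \mathcal H^0(E) \to 0$ and the double-dual sequence, with skyscrapers and $G[1]$ as the Jordan--H\"older factors; the main difference is that you prove the stability of $(\W E)^*[1]$ in full (the paper defers it to a ``slight modification'' of Lemma~\ref{<}), and your version is in fact cleaner, since the vanishing of the imaginary part forces every sub-object to have phase $1$, so stability amounts to showing $G[1]$ is a simple object of $\mathcal A^\sharp_{(\omega,\beta\omega)}$; you are also more careful than the paper in using the chamber hypothesis (via the dual type and the $c_2$-inequality) to upgrade $\mu_\omega$-semistability of $G=\mathcal H^{-1}(E)^{**}$ to stability. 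One small point to make explicit in Case 3 of $(\star)$: the strict inequality $\mu_\omega(\mathcal H^{-1}(A)) < \beta\omega$ requires $\rk\,\mathcal H^{-1}(A) < \rk\, G$, since a full-rank proper subsheaf of $G$ with $0$-dimensional quotient would have slope exactly $\beta\omega$; this case cannot occur because $G/\mathcal H^{-1}(A)$ injects into the torsion-free sheaf $\mathcal H^{-1}(B)$, which is immediate from the long exact sequence you already wrote down.
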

\begin{proof}
(i) Recall from Definition~\ref{Gr}~(iv) that $\W E$ is locally
free. Note that the numerical type of $(\W E)^*[1] \oplus Q$ is
$\mathfrak t$, and $(\W E)^*[1], Q \in \mathcal P_\Omega((0, 1])
= \mathcal A^\sharp_{(\omega, \beta\omega)}$ with
$$
\phi \big (Z_\Omega((\W E)^*[1])(m) \big )
= \phi \big (Z_\Omega(Q)(m) \big ) = 1
$$
for all $m > 0$.
Also, $Q$ is $(Z_\Omega, \mathcal P_\Omega)$-semistable.
A slight modification of the proof of Lemma~\ref{<} shows that
$(\W E)^*[1]$ is $(Z_\Omega, \mathcal P_\Omega)$-stable as well.
It follows that $(\W E)^*[1] \oplus Q$ is $(Z_\Omega,
\mathcal P_\Omega)$-semistable. Therefore, we have $(\W E)^*[1]
\oplus Q \in \overline{\mathfrak M}_\Omega(\mathfrak t)$.

(ii) Let $E \in \overline{\mathfrak M}_\Omega(\mathfrak t)$.
Since $c_1 \omega/r = \beta \omega$,
$\phi \big (Z_\Omega(E)(m) \big ) = 1$ for all $m > 0$.
By Lemma~\ref{4.2}, $\mathcal H^0(E)$ is a $0$-dimensional
torsion sheaf, and $\mathcal H^{-1}(E)$ is a torsion free
$\mu_\omega$-stable sheaf. From the exact sequence
$0 \to \mathcal H^{-1}(E)[1] \to E \to \mathcal H^0(E) \to 0$
in $\mathcal P_\Omega((0, 1])$, we see that $E$ is $S$-equivalent
to $\mathcal H^{-1}(E)[1] \oplus \mathcal H^0(E)$. Thus to prove
our result, we may assume that $E = A[1]$ for some torsion free
$\mu_\omega$-stable sheaf $A$ with $\mu_\omega(A) = \beta \omega$.
We have the canonical exact sequence $0 \to A \to A^{**} \to Q
\to 0$ where $Q$ is a $0$-dimensional torsion sheaf. It gives
rise to an exact sequence
$$
0 \to Q \to A[1] \to A^{**}[1] \to 0
$$
in $\mathcal P_\Omega((0, 1])$. Hence $E = A[1]$ is $S$-equivalent
to $A^{**}[1] \oplus Q$.
\end{proof}

\begin{theorem} \label{poly-gie-uhl}
Let $\Omega = (\omega, \rho, p, U)$ be from
Subsect.~\ref{subsect_Large}.
Fix a numerical type $\mathfrak t = (r, c_1, c_2)$.
Let $\w {\mathfrak t} = (-r, c_1, c_1^2-c_2)$.
Assume that $\omega$ lies in a chamber of type $\mathfrak t$.
\begin{enumerate}
\item[(i)] If $r > 0$, then
$\overline{\mathfrak M}_\Omega(\mathfrak t) \cong
 \overline{\mathfrak M}_{\omega}(\mathfrak t)$.

\item[(ii)] If $r < 0$ and $c_1 \omega/r < \beta \omega$,
then $\overline{\mathfrak M}_\Omega(\mathfrak t) \cong
\overline{\mathfrak M}_{\omega}(\w {\mathfrak t})$.

\item[(iii)] If $r < 0$ and $c_1 \omega/r = \beta \omega$,
then $\overline{\mathfrak M}_\Omega(\mathfrak t) \cong
\overline{\mathfrak U}_{\omega}(\w {\mathfrak t})$.
\end{enumerate}
\end{theorem}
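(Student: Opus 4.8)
The plan is to treat the three cases separately; in each, the substantive work has already been carried out in Lemmas~\ref{>}, \ref{<} and~\ref{=}, so the remaining task is only to read off from those lemmas a bijection between the set $\overline{\mathfrak M}_\Omega(\mathfrak t)$ of $S$-equivalence classes and the point set of the classical moduli space on the right (as $\overline{\mathfrak M}_\Omega(\mathfrak t)$ is defined purely as a set, the symbol $\cong$ is understood as a bijection of sets). One preliminary remark will be used in cases (ii) and (iii): since $\omega$ lies in a chamber of type $\mathfrak t$, it also lies in a chamber of type $\w{\mathfrak t}=(-r,c_1,c_1^2-c_2)$, because the walls of types $\mathfrak t$ and $\w{\mathfrak t}$ coincide --- one checks from Definition~\ref{wall-chambers}(ii) that replacing $F$ by $-F$ matches the two families of classes $\xi$, while $2rc_2-(r-1)c_1^2$ and $r^2$ are unchanged under $\mathfrak t\mapsto\w{\mathfrak t}$.

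For (i): by Lemma~\ref{4.2}, every element of $\overline{\mathfrak M}_\Omega(\mathfrak t)$ with $r>0$ is a torsion free $\mu_\omega$-semistable sheaf with $\mu_\omega>\beta\omega$, so for $\overline{\mathfrak M}_\Omega(\mathfrak t)$ to be non-empty one needs $c_1\omega/r>\beta\omega$, and in that regime part (i) is precisely Lemma~\ref{>}: each such object is $(Z_\Omega,\mathcal P_\Omega)$-stable (so that $S$-equivalence collapses to isomorphism), and each Simpson semistable sheaf of type $\mathfrak t$ is $\mu_\omega$-stable since $\omega$ lies in a chamber. For (ii): Lemma~\ref{<} identifies $\overline{\mathfrak M}_\Omega(\mathfrak t)$ with $\{(\w E)^v[1]:\w E\in\overline{\mathfrak M}_\omega(\w{\mathfrak t})\}$; since $E\mapsto E^v$ is an involution on $\mathcal D^b(X)$, the assignment $\w E\mapsto(\w E)^v[1]$ is injective, and it is surjective by Lemma~\ref{<}, hence a bijection $\overline{\mathfrak M}_\omega(\w{\mathfrak t})\to\overline{\mathfrak M}_\Omega(\mathfrak t)$. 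Again $S$-equivalence reduces to isomorphism on both sides: on the right $\w r=-r>0$ and $\omega$ is in a chamber of type $\w{\mathfrak t}$, and on the left all objects are $(Z_\Omega,\mathcal P_\Omega)$-stable by Lemma~\ref{<}.

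For (iii): since $\w r=-r>0$ and $\omega$ lies in a chamber of type $\w{\mathfrak t}$, the decomposition \eqref{uhlenbeck} applied to $\w{\mathfrak t}$ reads
\[
\overline{\mathfrak U}_\omega(\w{\mathfrak t})=\bigsqcup_{i\ge 0}{\mathfrak M}_\omega\big(-r,c_1,c_1^2-(c_2+i)\big)\times\text{\rm Sym}^i(X).
\]
The plan is to show that the rule sending $(\w E,\{x_1,\dots,x_i\})$, with $\w E\in{\mathfrak M}_\omega(-r,c_1,c_1^2-(c_2+i))$, to the $S$-equivalence class of $(\w E)^*[1]\oplus\big(\bigoplus_{j=1}^i\mathcal O_{x_j}\big)$ is a well-defined bijection onto $\overline{\mathfrak M}_\Omega(\mathfrak t)$. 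It is well defined because the target depends on $\w E$ only up to isomorphism and, by Lemma~\ref{00n}, on $\{x_j\}$ only through its image in $\text{\rm Sym}^i(X)$; it lands in $\overline{\mathfrak M}_\Omega(\mathfrak t)$ by Lemma~\ref{=}(i) and is surjective by Lemma~\ref{=}(ii). For injectivity: in the borderline case $c_1\omega=r\beta\omega$ both $(\w E)^*[1]$ and each $\mathcal O_{x_j}$ have phase $1$, and $(\w E)^*[1]$ is $(Z_\Omega,\mathcal P_\Omega)$-stable (by the variant of Lemma~\ref{<} used in the proof of Lemma~\ref{=}(i)), so the Jordan--H\"older factors of $(\w E)^*[1]\oplus\bigoplus_j\mathcal O_{x_j}$ are exactly $(\w E)^*[1]$ together with the skyscrapers $\mathcal O_{x_j}$; hence the $S$-equivalence class recovers $(\w E)^*[1]$, and therefore $\w E$, up to isomorphism, as well as the multiset $\{x_j\}$. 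Composing with the identification of the left-hand side above as $\overline{\mathfrak U}_\omega(\w{\mathfrak t})$ gives (iii).

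The main obstacle is part (iii), and within it the injectivity step, i.e.\ matching the $S$-equivalence classes on the polynomial Bridgeland side with the strata of the Uhlenbeck space. This rests on three points that must be handled carefully: (a) the local-freeness and $\mu_\omega$-stability of $(\w E)^*$, so that $(\w E)^*[1]=(\w E)^v[1]$ carries no extra cohomology and appears as a single Jordan--H\"older factor; (b) the fact that in the borderline case $c_1\omega=r\beta\omega$ all the semistable objects in sight have phase $1$, which is what permits a comparison of Jordan--H\"older factors; and (c) the coincidence of the walls of types $\mathfrak t$ and $\w{\mathfrak t}$, without which \eqref{uhlenbeck} could not be invoked for $\w{\mathfrak t}$. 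Parts (i) and (ii), by contrast, are essentially routine repackagings of Lemmas~\ref{>} and~\ref{<}.
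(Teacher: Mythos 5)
Your proposal is correct and follows essentially the same route as the paper, whose proof simply combines Lemma~\ref{>}, Lemma~\ref{<} and Lemma~\ref{=} and, for (iii), identifies the $S$-equivalence class of $(\W E)^*[1]\oplus Q$ with the point $\big(\W E,\sum_x h^0(X,Q_x)\cdot x\big)$ of the stratum ${\mathfrak M}_{\omega}(-r,c_1,c_1^2-(c_2+i))\times \text{\rm Sym}^i(X)$ of (\ref{uhlenbeck}). The extra details you supply --- the coincidence of walls of types $\mathfrak t$ and $\w{\mathfrak t}$, and the Jordan--H\"older argument for injectivity in (iii) --- are elaborations of points the paper leaves implicit rather than a different approach.
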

\begin{proof}
Follows from Lemma \ref{>}, Lemma \ref{<} and Lemma \ref{=}.
Note that in (iii),
if $E \in \overline{\mathfrak M}_\Omega(\mathfrak t)$
is $S$-equivalent to $(\W E)^*[1] \oplus Q$ where
$\W E \in {\mathfrak M}_{\omega}(-r, c_1, c_1^2-(c_2+i))$ for some
$i \ge 0$ and $Q$ is a length-$i$ $0$-dimensional torsion sheaf,
then we map $E$ to
$$
\left (\W E, \quad \sum_{x \in X} h^0(X, Q_x) \cdot x \right )
$$
in ${\mathfrak M}_{\omega}(-r, c_1, c_1^2-(c_2+i)) \times
\text{\rm Sym}^i(X) \subset \overline{\mathfrak U}_{\omega}(\w
{\mathfrak t})$ (see (\ref{uhlenbeck})).
\end{proof}

\end{document}